\newcolumntype{P}[1]{>{\centering\arraybackslash}p{#1}} 
\theoremstyle{plain}
\newtheorem{thm}{Theorem}[subsection]
\newtheorem*{thm*}{Theorem}
\declaretheoremstyle[
headfont=\normalfont\bfseries,
notebraces={(}{)},
bodyfont=\itshape,
headpunct={}
]{mythmstyle}
\declaretheorem[
style=mythmstyle,
numberlike=thm,
name=Theorem
]{theorem}
\declaretheorem[
style=mythmstyle,
numbered=no,
name=Theorem
]{theorem*}
\declaretheorem[
style=mythmstyle,
numberlike=thm,
name=Lemma
]{lem}
\declaretheorem[
style=mythmstyle,
numberlike=thm,
name=Proposition
]{prop}
\declaretheorem[
style=mythmstyle,
numbered=no,
name=Proposition
]{prop*}
\declaretheoremstyle[
headfont=\normalfont\bfseries,
notebraces={(}{)},
bodyfont=\normalfont,
headpunct={}
]{mystyle}
\declaretheorem[
style=mystyle,
numberlike=thm,
name=Definition
]{definitionnoindex}
\declaretheorem[
style=mystyle,
numbered=no,
name=Definition
]{definitionnoindex*}
\NewDocumentEnvironment{definition}{oo}
	{\IfNoValueTF{#1}
		{\begin{definitionnoindex}}	
		{\IfNoValueTF{#2}
			{\begin{definitionnoindex}[#1]\index{#1}}
			{\begin{definitionnoindex}[#1]\index{#2}}
		}
	}
	{\end{definitionnoindex}}%
\declaretheorem[
style=mystyle,
numberlike=thm,
name=Definition/Proposition.
]{definitionpropnoindex}
\NewDocumentEnvironment{definitionprop}{oo}
{\IfNoValueTF{#1}
	{\begin{definitionpropnoindex}}	
		{\IfNoValueTF{#2}
			{\begin{definitionpropnoindex}[#1]\index{#1}}
				{\begin{definitionpropnoindex}[#1]\index{#2}}
				}
			}
			{\end{definitionpropnoindex}}%
\declaretheorem[
style=mystyle,
numberlike=thm,
name=Remark
]{rem}
\declaretheorem[
style=mystyle,
numberlike=thm,
name=Example
]{exmp}
\theoremstyle{definition}
\newtheorem{exmps}[thm]{Examples}
\newtheorem{nota}[thm]{Notation(s)}
\definecolor{RED}{rgb}{1,0,0}\definecolor{BLUE}{rgb}{0,0,1} 
\newcommand{\Z}{\mathbb{Z}}
\newcommand{\N}{\mathbb{N}}
\newcommand{\C}{\mathtt{C}}
\newcommand{\g}{\mathfrak{g}}
\renewcommand{\phi}{\varphi}
\newcommand{\DB}[2]{\{\hspace{-3pt}\{#1,#2\}\hspace{-3pt}\}}
\newcommand{\DDBL}[3]{\big\{\hspace{-4pt}\big\{#1,\DB{#2}{#3}\big\}\hspace{-4pt}\big\}_L}
\newcommand{\TB}[3]{\{\hspace{-3pt}\{#1,#2,#3\}\hspace{-3pt}\}}
\newcommand{\Ob}{\mathrm{Ob}\,}
\newcommand{\Ch}{\mathtt{Ch}}
\newcommand{\DGA}{\mathtt{DGA}}
\newcommand{\DPoiss}{\mathtt{DPoiss}}
\newcommand{\catDLie}{\mathtt{DLie}}
\newcommand{\GL}{\mathrm{GL}}
\newcommand{\Der}{\mathrm{Der}}
\newcommand{\End}{\mathrm{End}\,}
\newcommand{\EEnd}{\mathbb{E}\mathrm{nd}}
\newcommand{\Hom}{\mathrm{Hom}}
\newcommand{\sHom}{\mathrm{\underline{hom}}}
\newcommand{\DDer}{\mathbb{D}\mathrm{er}}
\newcommand{\commu}{\circlearrowleft}
\numberwithin{equation}{section}
\title{Shifted double Lie-Rinehart algebras}
\address{Laboratoire angevin de recherches en mathématiques (Larema), Universit\'e d'Angers, COMUE Bretagne Loire, 2 Bd Lavoisier 49045 Angers Cedex 01}
\email{leray@math.univ-angers.fr}
\author[J. Leray]{Johan \textsc{Leray}} 
\date{\today}
\keywords{Noncommutative geometry -- Double Poisson algebra -- Double Lie-Rinehart algebra}
\thanks{This article is a part of PhD thesis \cite{Ler17} of the author, who's supported by the project "Nouvelle \'Equipe Topologie alg\'ebrique et Physique Math\'ematique", convention n$^\circ$2013-10203/10204 between La R\'egion des Pays de la Loire and the University of Angers. The author is indebted to G. Powell who has carefully read and corrected the first version of this paper; he also thanks the Centre Henri Lebesgue ANR-11-LABX-0020-01 for its stimulating mathematical research programs. }
\begin{document}

\begin{abstract}
We generalize the notions of shifted double Poisson and shifted double Lie-Rinehart structures, defined by Van den Bergh in \cite{VdB08,VdB08-2}, to monoids in a symmetric monoidal abelian category. The main result is that an $n$-shifted double Lie-Rinehart structure on a pair $(A,M)$ is equivalent to a non-shifted double Lie-Rinehart structure on the pair $(A,M[-n])$.
\end{abstract}

\maketitle
\tableofcontents
\section{Introduction}

\subsection{Noncommutative geometry}
In algebraic geometry, a commutative algebra $C$ over a field $k$ corresponds to an affine scheme $\mathrm{Spec}(C)$, via the  functor of points. The scheme $\mathrm{Spec}(C)$ is the geometrical object associated to its  algebra of functions $C$. Working in noncommutative geometry, a natural question arises: for a noncommutative algebra $A$, which is viewed as an algebra of noncommutative functions, what is the geometrical object associated to $A$? Kontsevich and Rosenberg propose to consider the family of schemes $\big\{\mathrm{Rep}_V(A)/\!/\GL(V)\big\}_V$, \emph{the moduli space of representations of $A$},  as successive approximations of a hypothetical noncommutative affine scheme "$\mathrm{NCSpec}(A)$".

The scheme $\mathrm{Rep}_V(A)$, called  \emph{the scheme of representations} of $A$ on the finite dimensional vector space $V$, is defined by its functor of points:
\[
\begin{array}{rccc}
\mathrm{Rep}_V(A) : & \mathtt{CommAlg}_k & \longrightarrow & \mathtt{Sets} \\
& B & \longmapsto & \Hom_{\mathtt{Alg}_k}(A, \End(V) \otimes B)
\end{array}.
\]
This scheme is affine, i.e. there exists a commutative $k$-algebra, denoted by $A_V$, such that $\mathrm{Rep}_V(A) = \mathrm{Spec}(A_V)$.  The algebra $A_V$ can be given by generators and relations: it is spanned by the set $\big\{ a_{ij} ~\big|~ a\in A, 1\leqslant i,j\leqslant \mathrm{dim}_k V \big\}$ for a fixed isomorphism $k^{\mathrm{dim}_k V }\cong V$, with the relations  $\alpha a_{ij} = (\alpha a)_{ij}$, $a_{ij}+b_{ij}=(a+b)_{ij}$, $\sum_{l} a_{il}b_{lj}=(ab)_{ij}$ and $ 1_{ij}= \delta_{ij}$ for all $\alpha\in k$, $a,b\in A$, where $\delta$ is Kronecker's symbol. The quotient of the representation scheme $\mathrm{Rep}_V(A)/\!/\GL(V)$ corresponds to taking $A^{\GL(V)}$, the invariant part of the algebra $A_V$ for the action by conjugation of $\GL(V)$.

Kontsevich and Rosenberg assert that any (noncommutative) property of "$\mathrm{NCSpec}(A)$" should induce its commutative analogue on $\mathrm{Rep}_V(A)/\!/\GL(V)$, for all $V$: this is the \emph{Kontsevich-Rosenberg principle}. Following this principle, many authors have developed noncommutative structures; the reader can refer to \cite{Gin05} for such constructions in noncommutative geometry.

\subsection{Noncommutative Poisson brackets}
It is natural to ask what is the good definition of a noncommutative Poisson structure. Recall that a Poisson bracket on an associative commutative $k$-algebra $B$  is a Lie bracket $\{-,-\} : B\otimes B\rightarrow  B$ which satisfies the Leibniz rule $\{ab,c\}=a\{b,c\}+\{a,c\}b$ for all $a,b$ and $c$ in $B$. For noncommutative algebras, this definition is too restrictive, as show by \cite[Th. 1.2]{FL98}: for $A$, an associative algebra with a noncommutative domain, i.e. $[A,A]\ne 0$, a Poisson bracket is the commutator, up to a multiplicative constant. In \cite{C-B11}, Crawley-Boevey gives us the minimal structure on an associative algebra $A$ which induces a Poisson bracket on  $A_V^{\GL(V)}$, which he calls an  $\mathrm{H}_0$-Poisson structure. An $\mathrm{H}_0$-Poisson structure on $A$ is a Lie bracket on  $A_\natural=A/[A,A]$   such that, for all $a\in A$  (with class $\bar{a}$ in $A_\natural$), the application $\langle \bar{a}, -\rangle : A_\natural \rightarrow A_\natural$ is induced by a derivation $d_a :A \rightarrow A$. Crawley-Boevey shows that, if $A$ is a $\mathrm{H}_0$-Poisson algebra, then there exists a unique Poisson structure on $A_V^{\GL(V)}$ that is compatible with the trace morphism (see \cite[Th. 1.6]{C-B11}).

An important remark is the following: there are few examples of $\mathrm{H}_0$-Poisson structures which do not arise from a richer structure. A good example of such a structure is a double Poisson bracket, defined by Van den Bergh in \cite{VdB08}. A double Poisson bracket on an associative algebra $A$ is a morphism
\[
	\begin{array}{rccc}
	\DB{-}{-} : & A\otimes A &\longrightarrow &A\otimes A \\
	& a\otimes b & \longmapsto & \DB{a}{b}'\otimes \DB{a}{b}''
	\end{array}
\]
(using Sweedler's notations) which is antisymmetric, i.e. for all elements $a$ and $b$ in $A$, $\DB{a}{b}=-\DB{b}{a}''\otimes \DB{b}{a}'$, which is a derivation in its second variable and satisfies  \emph{the double Jacobi relation} (see definition \ref{def::doubleJacobi}). There are lots of examples.  In \cite{VdW08}, Van de Weyer studies double Poisson brackets on semi-simple algebras of finite dimension. This structure is adapted to the noncommutative world: for example, in \cite{Pow16}, Powell shows that any double Poisson bracket on a free commutative algebra with at least two generators is trivial. Van den Bergh shows that (see \cite[Lem. 2.6.2]{VdB08}) a double Poisson bracket $\big( A,\DB{-}{-}\big)$ induces a $\mathrm{H}_0$-Poisson structure on $A$, where the Lie bracket $\{-,-\}_\natural$ is induced by $\{-,-\}_\natural:=\mu\circ\DB{-}{-}$, with $\mu$ the associative product on $A$. Double Poisson brackets are connected with many mathematical areas, as we'll now see.

In symplectic geometry, we can associate to an exact symplectic manifold $M$ its Fukaya category $\mathrm{Fuk}(M)$ (see \cite{Aur14}). For an exact symplectic $2d$-dimensional manifold, with vanishing first Chern class,  Chen \emph{et al.} show in \cite[Th.17]{CHSY15} that the linear dual of the reduced bar construction of $\mathrm{Fuk}(M)$ has a naturally defined  $(2-d)$-double Poisson bracket. This implies that the cyclic cohomology $\mathrm{HC}^\bullet(\mathrm{Fuk}(M))$ has a $(2-d)$-Lie bracket, an analogue of the Chas-Sullivan bracket in string topology (see \cite[Cor. 19]{CHSY15}).

To a finite quiver $Q$ (see \cite{CBEG05}), Van den Bergh associates the algebra $k\bar{Q}$ of the double quiver $\bar{Q}$,  which has a natural double Poisson bracket (see \cite{VdB08}) which induces the Kontsevich Lie bracket on $(k\bar{Q})_\natural$.

Other examples are related to loop spaces of manifolds with boundary (see \cite{MT13}, \cite{MT14}), the Kashiwara-Vergne problem (see \cite{AKKN17}), and noncommutative integrable systems (see \cite{dSKV15,dSKV16,Art15,Art17}).

\subsection{In this article} This paper is in two parts. In the first, we extend the definition of a shifted double Poisson algebra to monoids in an additive symmetric monoidal category $(\C,\otimes)$. For $\Sigma$ an element of the Picard group of $\C$ and $A$ an associative monoid in $\C$, a $\Sigma$-double Poisson bracket on $A$ is a morphism
\[
\DB{-}{-} : \Sigma A\otimes \Sigma A \longrightarrow \Sigma A\otimes A
\]
where $\Sigma A:= \Sigma \otimes A$, which satisfies antisymmetry and derivation properties (see definition \ref{def::dcrochet_decale}) and the double Jacobi identity (see definition \ref{def::doubleJacobi}). 

In the second part, we study a particular type of double Poisson algebras called \emph{linear double Poisson algebras}. They correspond to double Lie-Rinehart algebras (called double Lie algebroids by Van den Bergh in \cite[Sect. 3.2]{VdB08}), which are the noncommutative version of Lie-Rinehart algebras (see \cite{ACF15,Alv15}). The principal result of this paper is the shifting property of double Lie Rinehart algebras.
\begin{theorem*}[{cf. Theorem \ref{prop::equivalence_decalage}}]
	Let $\C $ be an additive symmetric monoidal category $\C$, with unit $\mathds{1}$, a monoid $A$, an $A$-bimodule $M$ and $\Sigma$ an invertible object in $\C$. The following assertions are equivalents: 
	\begin{enumerate}
		\item $(A,M,\rho_M,\DB{-}{-}_M)$ is a $\Sigma$-double Lie Rinehart algebra;
		\item $(A,\Sigma M,\rho_{\Sigma M},\DB{-}{-}_{\Sigma M})$ is a $\mathds{1}$-double Lie-Rinehart algebra.
	\end{enumerate}
	We have the equivalence of categories
	\[
	\Sigma\mbox{-}\mathtt{DLR}_A \cong \mathds{1}\mbox{-}\mathtt{DLR}_A,
	\]
	with $\Sigma$-$\mathtt{DRL}_A$, the category of $\Sigma$-double Lie-Rinehart algebras over the associative algebra $A$.
\end{theorem*}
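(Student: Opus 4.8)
The plan is to realize both directions of the equivalence through the single autoequivalence $\Sigma\otimes(-)$ of $\C$. Since $\Sigma$ is invertible, with quasi-inverse $\Sigma^{-1}$ and natural isomorphisms $\Sigma\otimes\Sigma^{-1}\cong\mathds{1}\cong\Sigma^{-1}\otimes\Sigma$, the functor $\Sigma\otimes(-)$ is an equivalence of $\C$ onto itself. First I would check that it restricts to an equivalence on the category of $A$-bimodules in $\C$: the object $N:=\Sigma M$ carries an $A$-bimodule structure whose left and right actions are the composites of the braidings $\tau_{A,\Sigma}$, $\tau_{\Sigma,A}$ with $\mathrm{id}_\Sigma$ tensored against the $A$-actions on $M$. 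Because the braiding is natural and $\Sigma$ invertible, $M\mapsto\Sigma M$ is fully faithful and essentially surjective on $A$-bimodules, with quasi-inverse $\Sigma^{-1}\otimes(-)$.

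Next I would transport the structure maps. The defining data of a $\Sigma$-double Lie-Rinehart structure on $(A,M)$ are the anchor $\rho_M$ and the bracket $\DB{-}{-}_M$, whose sources and targets are tensor products built from the shifted module $\Sigma M$ and from $A$. Rewriting each occurrence of $\Sigma M$ as the single object $N=\Sigma M$, these morphisms acquire exactly the shape of the anchor and the bracket of an \emph{un}shifted ($\mathds{1}$-)double Lie-Rinehart structure on $N$; I define $\rho_{\Sigma M}$ and $\DB{-}{-}_{\Sigma M}$ to be precisely these reindexed maps. The whole content of the equivalence (1)$\Leftrightarrow$(2) is then the assertion that, under the identification $N=\Sigma M$, the $\Sigma$-shifted axioms on $M$ are literally the $\mathds{1}$-axioms on $N$.

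This matching is not a formality, because the $\Sigma$-shifted antisymmetry, derivation and double Jacobi conditions are stated with braiding isomorphisms $\tau_{\Sigma,\Sigma}$, $\tau_{\Sigma,M}$, $\ldots$ inserted to reorganize the extra $\Sigma$ factors, whereas the $\mathds{1}$-axioms use only the standard braidings. The core of the proof is therefore a diagram check: I would show that, after setting $N=\Sigma M$ and invoking the coherence theorem for symmetric monoidal categories, every string of $\Sigma$-braidings appearing in a $\Sigma$-axiom collapses to the canonical comparison isomorphism, so that the $\Sigma$-axiom and the corresponding $\mathds{1}$-axiom become the same morphism equation. I expect the double Jacobi identity to be the main obstacle: it lives on a threefold tensor product and involves cyclic permutations, so it carries the longest composites of braidings, and one must verify that these assemble coherently into the cyclic symmetry of the unshifted double Jacobi relation. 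The antisymmetry, derivation and anchor (Rinehart) conditions are analogous but shorter.

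Finally, to upgrade the pointwise equivalence to an equivalence of categories, I would observe that $\Sigma\otimes(-)$ is functorial on morphisms: a morphism of $\Sigma$-double Lie-Rinehart algebras is an $A$-bimodule map intertwining the anchors and brackets, and tensoring it with $\mathrm{id}_\Sigma$ yields, by naturality of the braiding, an $A$-bimodule map intertwining the transported structures, hence a morphism in $\mathds{1}\mbox{-}\mathtt{DLR}_A$. The functor $\Sigma^{-1}\otimes(-)$ supplies a quasi-inverse, and the coherence isomorphisms $\Sigma\otimes\Sigma^{-1}\cong\mathds{1}$, $\Sigma^{-1}\otimes\Sigma\cong\mathds{1}$ furnish the unit and counit, natural by construction. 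This yields the stated equivalence $\Sigma\mbox{-}\mathtt{DLR}_A\cong\mathds{1}\mbox{-}\mathtt{DLR}_A$, and the converse implication (2)$\Rightarrow$(1) follows by running the same argument with $\Sigma^{-1}$ in place of $\Sigma$ and $M=\Sigma^{-1}N$.
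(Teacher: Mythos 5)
Your proposal follows essentially the same route as the paper's proof: transport the structure along the autoequivalence $\Sigma\otimes(-)$ of $A$-bimodules (with $\Sigma^{-}\otimes(-)$ cancelling the extra shift on the anchor, $\rho_{\Sigma M}=(\tau_{\Sigma^-,\Sigma M}\otimes A)(\Sigma^-\otimes\rho_M)$), identify the brackets through the canonical isomorphism $\Sigma(M\otimes A\oplus A\otimes M)\cong \Sigma M\otimes A\oplus A\otimes\Sigma M$, and verify each axiom by naturality of the braiding, concluding with the same functoriality argument for the equivalence of categories. One caution on your coherence step: coherence for symmetric monoidal categories does not collapse braidings between repeated copies of $\Sigma$ (e.g.\ $\tau_{\Sigma,\Sigma}\neq\mathrm{id}$ in $\Ch_k$), so the collapse you invoke must be justified by observing that in every axiom the $\Sigma$-factors travel together with their $M$-factors identically on both sides, only a single leftover $\Sigma$ crossing $A$'s and $M$'s --- which is what your planned diagram check would confirm, and which the paper carries out explicitly for the derivation compatibility rather than for the Jacobi identity you single out as hardest.
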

An example of such an algebra is given by Van den Bergh in \cite[App. A]{VdB08}: the Koszul double bracket. We extend this example in the general case of a monoid in an additive symmetric monoidal category without shifting (cf. section \ref{subsect::dcrochet_Koszul}):
\begin{theorem*}[cf. Theorem \ref{thm::koszul}]
	Let $A$ be a $\Sigma$-double Poisson algebra in an additive symmetric monoidal category  $(\C,\otimes,\tau)$. The free  $A$-algebra $T_A\Omega_A$ is a linear $\Sigma$-double Poisson algebra.
\end{theorem*}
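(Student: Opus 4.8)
The plan is to define the Koszul double bracket on $T_A\Omega_A$ on generators, using the universal differential together with the given bracket on $A$, and then to reduce every verification to the generators via the double derivation property. First I would fix notation: $\Omega_A$ denotes the $A$-bimodule of noncommutative differentials, generated by symbols $da$ subject to $d(ab)=(da)b+a(db)$, and $T_A\Omega_A=\bigoplus_{p\geqslant 0}\Omega_A^{\otimes_A p}$ is the free $A$-algebra on $\Omega_A$, graded by the differential degree $p$ (with $A$ in degree $0$ and the $da$ in degree $1$); the differential $d$ extends uniquely to a degree $+1$ square-zero derivation of $T_A\Omega_A$. Since a \emph{linear} $\Sigma$-double bracket is precisely one of differential degree $-1$, the bracket is forced on generators: writing $\DB{a}{b}=\DB{a}{b}'\otimes\DB{a}{b}''$, I would set
\[
\DB{a}{b}=0,\qquad \DB{da}{b}=\DB{a}{b},\qquad \DB{da}{db}=d\DB{a}{b}'\otimes\DB{a}{b}''+(-1)^{\varepsilon}\DB{a}{b}'\otimes d\DB{a}{b}''
\]
for $a,b\in A$, the case $\DB{a}{db}$ being determined by antisymmetry and the Koszul sign $\varepsilon$ being dictated by $\Sigma$ and the differential degree. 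Heuristically this says that the anchor is $da\mapsto\DB{a}{-}$ and that $d$ acts as a double Poisson morphism.

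Next I would check that this prescription is well defined on generators, i.e. compatible with $A$-bilinearity and with the relation $d(ab)=(da)b+a(db)$. This holds because $\DB{-}{-}$ is a double derivation on $A$ in the sense of definition \ref{def::dcrochet_decale} and $d$ is a derivation, so applying $d$ componentwise is compatible with the Leibniz rule. By freeness of $T_A\Omega_A$ over $A$, the bracket then extends uniquely to the whole algebra as a double derivation in each argument, and by construction it has differential degree $-1$, hence is linear.

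The three axioms are verified on generators and propagated by the derivation property. Antisymmetry reduces to the antisymmetry of $\DB{-}{-}$ on $A$, once one checks that the signs attached to $d$ and to $\Sigma$ are the correct ones; the derivation axioms hold by construction. The main obstacle is the double Jacobi identity of definition \ref{def::doubleJacobi}: as for ordinary double brackets, the triple bracket $\TB{-}{-}{-}$ is a derivation in each of its arguments, so it suffices to verify the identity on the generators $A$ and $da$. On these, every term of $\TB{-}{-}{-}$ is obtained by applying $d$ once, twice or three times (componentwise) to the triple bracket on $A$; the contributions carrying two differentials in the same slot vanish because $d^2=0$, and what remains is exactly the double Jacobi identity already satisfied by $A$, transported through $d$. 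The delicate point is the precise matching of the Koszul signs coming from $\Sigma$ and from the differential degrees across all terms of the triple bracket, which is where I expect the bulk of the computation to lie; once this sign bookkeeping is organized, the identity collapses onto that of $A$, and one concludes that $(A,\Omega_A)$ carries the induced double Lie--Rinehart structure, i.e. that $T_A\Omega_A$ is a linear $\Sigma$-double Poisson algebra.
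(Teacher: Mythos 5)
Your construction coincides with the paper's (Theorem \ref{thm::koszul}): the Koszul bracket is obtained by transporting $\DB{-}{-}$ along the universal derivation $d$ and extending by the derivation property, and the axioms are reduced to those of $A$. But as written your argument only addresses $\C=\Ch_k$: the theorem is stated for an arbitrary additive symmetric monoidal category with coequalizers, where there are no elements $da$, no ``differential degree'', and no Koszul signs. The element-wise prescription on generators has to be replaced by the universal property of $\Omega_A$ (proposition \ref{prop::sDer_representable}): the derivation property of $\DB{-}{-}$ in each slot produces, via the isomorphism $\Der(X\otimes A\otimes Y,M)\cong \Hom_{A\mbox{-}\mathtt{Bimod}}(X\otimes \Omega_A\otimes Y,M)$ applied slot by slot, the bimodule morphisms $\rho^\Omega_l$, $\rho^\Omega_r$ and $\phi:\Sigma\Omega_A\otimes\Sigma\Omega_A\to\Sigma A\otimes A$; this is also what subsumes your ``well-definedness'' check against $d(ab)=(da)b+a(db)$. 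Once the construction is phrased this way, the sign bookkeeping that you defer --- and which you yourself identify as the bulk of the computation --- never materializes: in the paper every axiom is a commutative-diagram chase in which the symmetry $\tau$ carries all signs, so there is nothing left to verify by hand. Deferring that step in your formulation is a genuine gap, since it is precisely where your proof would live.

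The second gap is in the double Jacobi verification, whose mechanism you misidentify. No $d^2=0$ cancellation occurs, and none is available in general $\C$, where $d:A\to\Omega_A$ is merely the universal derivation, not a square-zero operator on $T_A\Omega_A$: in the Koszul bracket $\DB{da}{db}^\Omega=(d\otimes A+A\otimes d)\DB{a}{b}$, nested brackets never apply $d$ to an element of $\Omega_A$ --- for instance the left component of $\DB{da}{d\DB{b}{c}'}^\Omega$ is $d\DB{a}{\DB{b}{c}'}'\otimes\DB{a}{\DB{b}{c}'}''$, carrying exactly one $d$ per slot. What actually happens (the last two diagrams of the paper's proof) is that the jacobiator of $\DB{-}{-}^\Omega$ precomposed with $(\Sigma d)^{\otimes 3}$, and its mixed analogue precomposed with $\Sigma d\otimes \Sigma A\otimes \Sigma d$ for condition \eqref{def::DRL_compatible2} of definition \ref{def::dLieRinehart}, is identified term by term with the vanishing double jacobiator of $A$ pushed forward by $d$; one then concludes that the $\Omega$-jacobiator itself vanishes by uniqueness of bimodule morphisms out of tensor powers of $\Omega_A$, which requires observing that each jacobiator component is a multi-derivation --- exactly the fact you invoke (``the triple bracket is a derivation in each argument'') but do not prove, and which in the paper is built into the equivalence of proposition \ref{prop::eq_LieRinehart_DPlin}. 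Note also that the mixed component (one argument in $A$, two in $\Omega_A$) is a separate axiom, condition \eqref{def::DRL_compatible2}, needing its own diagram; your ``verify on the generators $A$ and $da$'' covers it only implicitly. None of your steps is false, but each stops exactly where the paper's verifications begin.
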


\section{Notation and algebraic background}
\subsection{Symmetric monoidal category}
	We consider $(\mathtt{C},\otimes)$, an additive category with monoidal structure $\otimes$,  unit $\mathds{1}$ and such that the bifunctor $-\otimes - : \mathtt{C}\times \mathtt{C} \rightarrow \mathtt{C}$ is additive in each entry. We assume that $(\mathtt{C},\otimes)$ is symmetric for the natural transformation $\tau$: for all objects $A$ and $B$ in $\mathtt{C}$, we have the isomorphism $\tau_{A,B}:A\otimes B \rightarrow B\otimes A$ which satisfies $\tau_{B,A}\tau_{A,B}=\mathrm{id}_{A\otimes B}$. 
	
	Throughout this paper, we fix $\Sigma$, an invertible object in $\C$, with inverse $\Sigma^-$ and an isomorphism $\rho : \Sigma^{-}\otimes \Sigma \rightarrow \mathds{1}$. By the symmetry of $\C$, we also have the isomorphism  $\rho\tau_{\Sigma,\Sigma^-} : \Sigma\otimes \Sigma^{-} \cong \mathds{1}$. In particular, $\Sigma$ induces the functor $\Sigma \otimes - : \mathtt{C} \rightarrow \mathtt{C}$ which is an equivalence of categories. For all objects $C$ in the category $\mathtt{C}$, we denote by 
	\[
		\Sigma C:=\Sigma\otimes C
	\]
	its image by this functor. The reader who is not familiar with these categorical notions can think of $\mathtt{C}$ as the category $\Ch_k$ of $\mathbb{Z}$-graded chain complexes over a field $k$, equipped with the monoidal structure $\otimes$ given by the tensor product of complexes and symmetry given, for $a \in A$ and $b\in B$ homogeneous elements, by $\tau_{A,B}(a\otimes b)=(-1)^{|a||b|}b\otimes a$. In this case, the reader can take $\Sigma$ to be the chain complex $k$, concentrated in degree $r$, for $r$ a fixed integer, i.e. $(\Sigma A)_n=A_{n-r}= :A[-r]$.
	
	We denote by $\mathtt{As}(\C)$ the category of associative monoids in $\C$: objects in $\mathtt{As}(\C)$ are objects $A\in \C$ with an associative product $\mu: A\otimes A \rightarrow A$, and for two monoids $A$ and $B$, $\Hom_{\mathtt{As}(\C)}(A,B)$ is the set of monoid morphisms.

	We say the category $\mathtt{C}$ is \emph{closed}, if for all $C\in \Ob \mathtt{C}$, there exists a functor $\sHom(C,-)$ which is the right adjoint of $-\otimes~C$: 
	\begin{equation*}
		-\otimes~C :\C \rightleftarrows \C : \sHom(C,-).
	\end{equation*}
	For example, the category $(\Ch_k,\otimes_k)$ is closed (see \cite{Wei94}). 

\subsection{$A$-bimodule structures on $A\otimes A$}\label{subsect::bimodule_structure}
Fix $A$ and $B$, two associative monoids in $\C$ : the product $A\otimes B$ is also a monoid in $\C$. We define $A^{\circ}$, the opposite monoid of $A$, to be given by the same object but with the product $\mu_{A^\circ}:=\mu_A\circ \tau_{A,A}$. We have the usual notion of left or right modules over $A$: we denote by $A$-$\mathtt{Mod}_\C$ (resp. $\mathtt{Mod}_\C$-$A$) the category of left $A$-modules (resp. right $A$-modules) in the category $\C$ and we have the equivalence $	\mathtt{Mod}_\C\mbox{-}A \cong A^\circ\mbox{-}\mathtt{Mod}_\C$. We denote by $(A,B)$-$\mathtt{Bimod}_\C:=A$-$\mathtt{Mod}_\C$-$B$, the category of $(A,B)$-bimodules in $\C$, which is equivalent to $(A\otimes B^\circ)$-$\mathtt{Mod}_\C$. For $M$ an $(A,B)$-bimodule and $X$ and $Y$ two objects in $\C$, the product  $X\otimes N\otimes Y$ is also an $(A,B)$-bimodule by the symmetry of $\C$. Fix two $(A,B)$-bimodules $M$ and $N$: the product $M\otimes N$ has a structure of $(A,B)$-bimodule, called  the \emph{external} one, given by the left $A$-action on $M$ and the $B$-right action on $N$. $M\otimes N$ also has an \emph{internal} $(A,B)$-bimodule structure, given by by the left $A$-action on $N$ and the right $B$-action on $M$.
 
When $A=B$, we denote $A\mbox{-}\mathtt{Bimod}_\C:= (A,A)\mbox{-}\mathtt{Bimod}_\C$, the category of $(A,A)$-bimodules. We denote by $A^e$, the monoid $A^e:= A\otimes A^\circ$, so that the category $A\mbox{-}\mathtt{Bimod}_\C$ is equivalent to $A^e$-$\mathtt{Mod}_\C$. The symmetry of the category $\C$ gives us the isomorphism of monoids $\tau_{A,A}: A^e \cong (A^e)^{\circ}$.  The monoid structure 	of  $A\otimes A^\circ$ gives a canonical structure of $A^e$-bimodule on $\Sigma_1A\otimes \Sigma_2 A$ for all $\Sigma_1,\Sigma_2$ in $\C$, i.e. two $A$-bimodule structures (we implicitly use the isomorphism $(A^e)^\circ \cong A^e$):
		\begin{enumerate}
			\item \emph{the external structure} given by
			\begin{eqnarray*}
			&\mu_A^e :=(\Sigma_1 A\otimes \Sigma_2\mu): (\Sigma_1 A\otimes\Sigma_2 A)\otimes A \rightarrow \Sigma_1 A\otimes\Sigma_2 A, \\
			&_A^{~}\mu^e := (\Sigma_1\mu\otimes \Sigma_2 A)(\tau_{A,\Sigma_1}\otimes A\otimes\Sigma_2 A): A\otimes (\Sigma_1 A\otimes\Sigma_2 A) \rightarrow \Sigma A\otimes\Sigma A ;
			\end{eqnarray*}
			\item \emph{the internal structure} given by
			\begin{eqnarray*}
			&\mu_A^i := (\Sigma_1\mu\otimes\Sigma_2 A)(\Sigma_1 A\otimes \tau_{\Sigma_2 A,A}) : (\Sigma_1 A\otimes\Sigma_2 A)\otimes A \rightarrow \Sigma_1 A\otimes\Sigma_2 A, \\
			&_A^{~}\mu^i := (\Sigma_1 A\otimes\Sigma_2 \mu)(\tau_{A,\Sigma_1 A \Sigma_2} \otimes  A):A\otimes (\Sigma_1 A\otimes \Sigma_2 A) \rightarrow \Sigma_1 A\otimes\Sigma_2 A .
			\end{eqnarray*}
		\end{enumerate}
\begin{rem}\label{rem::equivalence_categorie}
	Let $\Sigma$ an invertible object in $\C$ with the isomorphism $\rho: \Sigma^-\otimes \Sigma \rightarrow \mathds{1}$ and $A$ be a monoid in $\C$. The functor $\Sigma\otimes -:\C \rightarrow \C$ is equivalent to an equivalence of categories $\Sigma\otimes -:A\mbox{-}\mathtt{Mod}_\C \rightarrow A\mbox{-}\mathtt{Mod}_\C$.
\end{rem}

\section{\texorpdfstring{$\Sigma$}{Sigma}-double Poisson algebras}
In this section, we extend constructions made by Van den Bergh in \cite{VdB08} to a general categorical framework. We consider $(\mathtt{C},\otimes)$ a ymmetric monoidal additive category and we fix $\Sigma$ an invertible object in $\C$ with the isomorphism $\rho : \Sigma^-\otimes \Sigma \rightarrow \mathds{1}$  and $(A,\mu)$ a monoid in $\C$. 

\subsection{$\Sigma$-double bracket}
We recall that a morphism $\phi : A \rightarrow M$ in $\C $ between an algebra $(A,\mu)$ and an $A$-bimodule $(M,\mu_A,_A\!\mu)$ is a \emph{derivation} if $\phi\mu= \mu_A (\phi\otimes A)+ _A\mu(A\otimes \phi)$. We denote by $\Der(A,M)$ (resp. $\Der(A)$) the group of derivations between $A$ and $M$ (resp. the group of derivations between $A$ and itself). 
\begin{rem}
	If the category $\C$ is closed, we can internalize the notion of derivation, then  the group $\Der(A,M)$ is in $\C$. 	For example, for a differential graded algebra $A$ and an $A$-bimodule $M$ in $\Ch_k$, the set $\Der(A,M)$ can be enriched with a chain complex structure.  
\end{rem}
\begin{definition}[$\Sigma$-shifted double bracket]\label{def::dcrochet_decale}
	Let $(A,\mu)$ be a monoid of $(\mathtt{C},\otimes)$. A \emph{$\Sigma$-shifted double bracket} or $\Sigma$-\emph{double bracket} on $A$ is a morphism
	\[
		f:=\DB{-}{-}: \Sigma A \otimes \Sigma A \longrightarrow \Sigma A\otimes A,
	\]
	represented by the directed coloured graph
	\[
	\begin{tikzpicture}[scale=0.3]
	\draw[shift={(-0.15,0)},thin,blue] (2,9.5) -- (2,6);
	\draw[thin] (2,9.5) -- (2,6);
	\draw[shift={(-0.15,0)},thin,blue] (4,9.5) -- (4,7.5);
	\draw[thin] (4,9.5) -- (4,6);
	\draw[fill=white](1.7,7.5) rectangle (4.3,8);
	\end{tikzpicture}
	\]
	where the direction is from top to bottom and where blue edges represent the suspension $\Sigma$. The $\Sigma$-double bracket $f$	
	\begin{itemize}
		\item is \emph{antisymmetric} if $\DB{-}{-}= - \Sigma\tau_{A,A} \DB{-}{-} \tau_{\Sigma A,\Sigma A}$, i.e. in terms of directed graphs:
		\[
		\begin{tikzpicture}[scale=0.3,baseline=(n.base)]
			\draw[shift={(-0.15,0)},thin,blue] (2,9.5) -- (2,6);
			\draw[thin] (2,9.5) -- (2,6);
			\draw[shift={(-0.15,0)},thin,blue] (4,9.5) -- (4,8);
			\draw[thin] (4,9.5) -- (4,6);
			\draw[fill=white] (1.7,7.5) rectangle (4.3,8);
			\node (n) at (3,7.75) {};
		\end{tikzpicture}
		\quad = - \quad
		\begin{tikzpicture}[scale=0.3,baseline=(n.base)]
			\draw (2,9.5) to[out=270,in=90] (4,8);
			\draw[shift={(-0.15,0)},thin,blue] (2,9.5) to[out=270,in=90] (4,8);
			\draw (4,9.5) to[out=270,in=90] (2,8);
			\draw[shift={(-0.15,0)},thin,blue] (4,9.5) to[out=270,in=90] (2,8);
			\draw[fill=white] (1.7,7.5) rectangle (4.3,8);
			\node (n) at (3,7.75) {};
			\draw (2,7.5) to[out=270,in=90] (4,6);
			\draw (4,7.5) to[out=270,in=90] (2,6);
			\draw[shift={(-0.15,0)},thin,blue] (2,7.5) -- (2,6);
		\end{tikzpicture}~~;
		\]
		\item satisfies \emph{the left derivation property} if the double bracket $f$ is a derivation in its first variable for the internal $A$-bimodule structure of $\Sigma A\otimes A$, i.e.  
		\[
		 f(\Sigma \mu\otimes \Sigma A)= ^{~}_A\mu^i(A\otimes f)(\tau_{\Sigma, A}\otimes A\otimes \Sigma A) +\mu^i_A (f\otimes A)(\Sigma A\otimes \tau_{A,\Sigma A}).
		\]
		This property can be described by the following sum of directed graphs:
		\[
			\begin{tikzpicture}[scale=0.3,baseline=(n.base)]
				\node (n) at (3,7) {};
				\draw[shift={(-0.1,0)},thin,blue] (0,10) to[out=270,in=100] (1,7.5); 
				\draw[thin] (0,10) to[out=270,in=100] (1,8);
				\draw[thin] (2,10) to[out=270,in=80] (1,8);
				\coordinate (Mu) at (1,8.1) ;
				\draw[fill=black] (Mu) circle (3pt);
				\draw[thin] (1,8) to[out=270,in=90] (1,7.5);
				\draw[thin] (4,10) to[out=270,in=90] (3,8) to[out=270,in=90] (3,7.5);
				\draw[shift={(-0.1,0)},blue,thin] (4,10) to[out=270,in=90] (3,8) to[out=270,in=90] (3,7.5);
				\draw (0.7,7) rectangle (3.3,7.5);
				\draw[thin] (1,7) -- (1,5);
				\draw[shift={(-0.1,0)},blue,thin] (1,7) -- (1,5);
				\draw[thin] (3,7) -- (3,5);
			\end{tikzpicture}
			\quad = \quad
			\begin{tikzpicture}[scale=0.3,baseline=(n.base)]
			\node (n) at (3,7) {};
				\draw (0,10) -- (0,9.4);
				\draw[thin,blue] (-0.1,10) -- (-0.1,9.5); 
				\draw[thin,blue] (-0.1,9.5) to[out=270,in=100] (1.9,8); 
				\draw (0,9.4) -- (0,7.5);
				\draw[thin] (2,10) -- (2,8);
				\draw[thin] (4,10) -- (4,8);
				\draw[shift={(-0.1,0)},blue,thin] (4,10) -- (4,8);
				\draw (1.7,7.5) rectangle (4.3,8);
				\draw[thin,blue] (1.9,7.5) to[out=270,in=90] (-0.1,6.5);
				\draw (0,7.5) to[out=270,in=90] (3,5.5);
				\draw[thin] (2,7.5) to[out=270,in=90] (1,5.5);
				\coordinate (Mu) at (3,5.6) ;
				\draw[fill=black] (Mu) circle (3pt);
				\draw[shift={(-0.1,0)},blue,thin] (0,6.5) to[out=270,in=90] (1,5);
				\draw[thin] (1,5.5) -- (1,5);
				\draw[thin] (4,7.5) to[out=270,in=90] (3,5.5);
				\draw[thin] (3,5.5) -- (3,5);
			\end{tikzpicture}
			\quad + \quad
			\begin{tikzpicture}[scale=0.3,baseline=(n.base)]
				\node (n) at (3,7) {};
				\draw[thin] (2,10) -- (2,9.5);
				\draw[thin] (4,10) -- (4,9.5);
				\draw[shift={(-0.1,0)},blue,thin] (4,10) -- (4,9.5);
				\draw[thin] (0,10) -- (0,8);
				\draw[shift={(-0.1,0)},blue,thin] (0,10) -- (0,8);
				\draw[thin] (4,9.5) to[out=270,in=90] (2,8);
				\draw[shift={(-0.1,0)},blue,thin] (4,9.5) to[out=270,in=90] (2,8);
				\draw (2,9.5) to[out=270,in=90] (4,8);
				\draw[thin] (4,8) -- (4,7.5);
				\draw (-0.3,7.5) rectangle (2.3,8);
				\draw[thin] (0,7.5) to[out=270,in=90] (1,5.5);
				\draw[shift={(-0.1,0)},blue,thin] (0,7.5) to[out=270,in=90] (1,5);
				\draw[thin] (2,7.5) to[out=270,in=90] (3,5);
				\draw[thin] (4,7.5) to[out=270,in=90] (1,5.5);
				\coordinate (Mu) at (1,5.6) ;
				\draw[fill=black] (Mu) circle (3pt);
				\draw[thin] (1,5.5) -- (1,5);
			\end{tikzpicture}
		\]
	\end{itemize}
	If $\Sigma=\mathds{1}$, a $\Sigma$-shifted double bracket is just called a double bracket.
\end{definition}
\begin{rem}
	Suppose that $(A,\mu,\iota)$ is a monoid with unit $\iota : \mathds{1}\rightarrow A$ in the category $\mathtt{C}$, with a  $\Sigma$-double bracket $f$ which satisfies the derivation property. 
	Then, as the following diagram commutes
	\[
	\xymatrix@C=1.5pc@R=0.7pc{
		\mathds{1}\otimes \mathds{1} \ar@{}[rd]|\commu \ar[r]^{\iota\otimes\iota} \ar[d]_\cong & A\otimes A \ar[d]^\mu \\
		\mathds{1} \ar[r]_\iota & A,
	}
	\]
	the morphism $ \mathds{1}\otimes A \overset{f(\iota\otimes1)}{\longrightarrow} A\otimes A$ is trivial.
\end{rem}
\begin{prop} \label{derivation}
	Let $(A,\mu)$ be a monoid of the category $\mathcal{C}$, with an antisymmetric $\Sigma$-double bracket $f$ which satisfies the left derivation property. Then, the $\Sigma$-double bracket $f$ also satisfies the \emph{right derivation property}, which can be described by the following sum of directed graphs :
	\[
	\begin{tikzpicture}[scale=0.3,baseline=(n.base)]
		\node (n) at (0.7,7) {};
		\draw[thin] (0,10) to[out=270,in=90] (1,7.5);
		\draw[shift={(-0.1,0)},blue,thin] (0,10) to[out=270,in=90] (1,7.5);
		\draw[thin] (2,10) to[out=270,in=100] (3,8);
		\draw[thin] (4,10) to[out=270,in=80] (3,8);
		\draw[shift={(-0.1,0)},blue,thin] (2,10) to[out=270,in=90] (3,7.5);
		\draw[fill=black] (3,8.1) circle (3pt);
		\draw[thin] (3,8) -- (3,7.5);
		\draw (0.7,7) rectangle (3.3,7.5);		
		\draw[thin] (1,7) -- (1,5);
		\draw[shift={(-0.1,0)},blue,thin] (1,7) -- (1,5);
		\draw[thin] (3,7) -- (3,5);
	\end{tikzpicture}
	\quad = \quad
	\begin{tikzpicture}[scale=0.3,baseline=(n.base)]
		\node (n) at (0.7,7) {};
		\draw[thin] (0,10) -- (0,9.5);
		\draw[shift={(-0.1,0)},blue,thin] (0,10) -- (0,9.5);
		\draw (2,10) -- (2,9.4);
		\draw[shift={(-0.1,0)},blue,thin] (2,10) -- (2,9.5);
		\draw[thin] (4,10) -- (4,8);
		\draw[thin] (0,9.5) to[out=270,in=90] (2,8);
		\draw[shift={(-0.1,0)},blue,thin] (0,9.5) to[out=270,in=90] (2,8);
		\draw[shift={(-0.1,0)},blue,thin] (2,9.5) to[out=270,in=90] (4,8);
		\draw (2,9.4) to[out=270,in=90] (0,8);
		\draw[thin] (0,8) -- (0,7.5);
		\draw (1.7,7.5) rectangle (4.3,8);
		\draw[thin,blue] (1.9,7.5) to[out=270,in=90] (-0.1,6.5);
		\draw (0,7.5) to[out=270,in=90] (1,5.5);
		\draw[thin] (2,7.5) to[out=270,in=90] (1,5.5);
		\draw[fill=black] (1,5.6) circle (3pt);
		\draw[shift={(-0.1,0)},blue,thin] (0,6.5) to[out=270,in=90] (1,5);
		\draw[thin] (1,5.5) -- (1,5);
		\draw[thin] (4,7.5) to[out=270,in=90] (3,5.5);
		\draw[thin] (3,5.5) -- (3,5);			
	\end{tikzpicture}
	\quad + \quad
	\begin{tikzpicture}[scale=0.3,baseline=(n.base)]
		\node (n) at (0.7,7) {};
		\draw[thin] (0,10) -- (0,8);
		\draw[shift={(-0.1,0)},blue,thin] (0,10) -- (0,8);
		\draw[thin] (2,10) -- (2,8);
		\draw[shift={(-0.1,0)},blue,thin] (2,10) -- (2,8);
		\draw[thin] (4,10) -- (4,7.5);
		\draw (-0.3,7.5) rectangle (2.3,8);
		\draw[thin] (0,7.5) to[out=270,in=90] (1,5);
		\draw[shift={(-0.1,0)},blue,thin] (0,7.5) to[out=270,in=90] (1,5);
		\draw[thin] (2,7.5) to[out=270,in=90] (3,5.5)-- (3,5);
		\draw[thin] (4,7.5) to[out=270,in=90] (3,5.5);
		\coordinate (Mu) at (3,5.6) ;
		\draw[fill=black] (Mu) circle (3pt);
	\end{tikzpicture}~.
	\]
\end{prop}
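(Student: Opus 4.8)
The plan is to derive the right derivation property purely formally from the two standing hypotheses — antisymmetry and the left derivation property — using nothing more than the naturality of the symmetry $\tau$ and the relation $\tau_{B,A}\tau_{A,B}=\mathrm{id}$. The conceptual point is that antisymmetry interchanges the two arguments of $f$, up to a sign and the flip $\Sigma\tau_{A,A}$ of the two output strands; since that output flip also swaps the internal and external $A$-bimodule structures on $\Sigma A\otimes A$, any statement about the behaviour of $f$ in its \emph{first} variable for the internal structure translates into the corresponding statement about its \emph{second} variable for the external structure. This is exactly the passage from the left to the right derivation property.

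Concretely, I would start from the left-hand side of the desired identity, namely $f$ precomposed with a multiplication inserted in the second slot. First I would rewrite the outermost $f$ by means of antisymmetry, $f=-\Sigma\tau_{A,A}\,f\,\tau_{\Sigma A,\Sigma A}$, which transports the inserted product from the second argument into the first. Then, invoking the naturality of $\tau$, I would slide the crossing $\tau_{\Sigma A,\Sigma A}$ past the multiplication vertex, so that the product comes to sit in the first variable of $f$; the expression becomes an occurrence of $f(\Sigma\mu\otimes\Sigma A)$ flanked by symmetry isomorphisms. At this stage the left derivation property applies verbatim and replaces that occurrence by its sum of two graphs.

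Finally, I would apply antisymmetry once more — this time in the reverse direction — to each of the two resulting terms, in order to restore the standard orientation of the output strands. The two minus signs produced by the two uses of antisymmetry cancel, and the compositions of the form $\tau_{B,A}\tau_{A,B}$ that appear collapse to identities, leaving precisely the two graphs displayed in the statement. In the graphical calculus each of these moves is transparent: applying antisymmetry amounts to reflecting the diagram and crossing the two top strands, while naturality of $\tau$ lets strands slide freely past the multiplication vertex.

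I expect the only genuine difficulty to be the bookkeeping: keeping track of every symmetry isomorphism and of the suspension (blue) strands so that the various crossings really do cancel in pairs and the suspensions are routed to the correct tensor factors. Because $\Sigma$ is merely an invertible object and $\tau$ is a possibly non-trivial braiding, one cannot be cavalier about the order in which strands cross; the graphical notation is exactly what makes this routing manageable, and the verification ultimately reduces to checking that the two halves of each picture agree strand by strand.
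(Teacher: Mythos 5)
Your proposal is correct and is exactly the intended argument: the paper states Proposition \ref{derivation} without written proof (as in Van den Bergh's unshifted setting), and the expected derivation is precisely your sandwich of the left derivation property between two applications of antisymmetry, with naturality of $\tau$ used to slide the multiplication vertex through the crossing $\tau_{\Sigma A,\Sigma A}$, the two minus signs cancelling term by term. The one point that genuinely needs checking is the one you single out — that post-composing with $\Sigma\tau_{A,A}$ interchanges the internal and external $A$-bimodule structures on $\Sigma A\otimes A$ (e.g. $\Sigma\tau_{A,A}\circ{}_A\mu^i = {}_A\mu^e\circ(A\otimes\Sigma\tau_{A,A})$), which converts the internal actions ${}_A\mu^i$, $\mu^i_A$ produced by the left derivation property into the external actions appearing in the right derivation graphs — and this identity holds by inspection, so your proof is complete.
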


\begin{definition}[$\Sigma$-double Lie algebra]\label{def::algebre_doubleLie}
	Let $L$ be an object of the category $\mathtt{C}$, with an antisymmetric $\Sigma$-double bracket 
	$
		f:=\DB{-}{-} : \Sigma L \otimes \Sigma L \longrightarrow \Sigma L\otimes L.
	$
	We call \emph{double-jacobiator} associated to $f$, the application
	\[
	DJ_f := \TB{-}{-}{-} : \Sigma L \otimes \Sigma L \otimes \Sigma L \rightarrow \Sigma L\otimes L\otimes L
	\]
	defined by 
	\[
		\DB{-}{-}_l + \Sigma\tau_{A,A\otimes A}\DB{-}{-}_l\tau_{\Sigma A\otimes \Sigma A, \Sigma A} +\Sigma\tau_{A\otimes A,}\DB{-}{-}_l\tau_{\Sigma A,\Sigma A\otimes \Sigma A}
	\]
	where $\DB{-}{-}_l= (f\otimes A)(\Sigma A\otimes f)$; we can describe the double jacobiator diagrammatically by the following sum of directed graphs:
	\[
	\begin{tikzpicture}[scale=0.3,baseline=(n.base)]
		\node (n) at (0.7,7) {};
		\draw (0,10) to[out=270,in=90] (0,8);
		\draw[shift={(-0.1,0)},thin,blue] (0,10) to[out=270,in=90] (0,8);
		\draw (2,10) to[out=270,in=90] (2,8);
		\draw[shift={(-0.1,0)},thin,blue] (2,10) to[out=270,in=90] (2,8);
		\draw (4,10) to[out=270,in=90] (4,8);
		\draw[shift={(-0.1,0)},thin,blue] (4,10) to[out=270,in=90] (4,8);
		\draw(1.7,7.5) rectangle (4.3,8);
		\draw[thin] (0,8) -- (0,7);
		\draw[shift={(-0.1,0)},thin,blue] (0,8) -- (0,7);
		\draw[thin] (2,7.5) -- (2,7);
		\draw[shift={(-0.1,0)},thin,blue] (2,7.5) -- (2,7);
		\draw[thin] (4,7.5) -- (4,6.5);
		\draw(-0.3,6.5) rectangle (2.3,7);
		\draw (0,6.5) to[out=270,in=90] (0,4.5);
		\draw (2,6.5) to[out=270,in=90] (2,4.5);
		\draw (4,6.5) to[out=270,in=90] (4,4.5);
		\draw[shift={(-0.1,0)},thin,blue] (0,6.5) -- (0,4.5);
	\end{tikzpicture}
	\quad + \quad
	\begin{tikzpicture}[scale=0.3,baseline=(n.base)]
		\node (n) at (0.7,7) {};
		\draw (0,10) to[out=270,in=90] (4,8);
		\draw[shift={(-0.1,0)},thin,blue] (0,10) to[out=270,in=90] (4,8);
		\draw (2,10) to[out=270,in=90] (0,8);
		\draw[shift={(-0.1,0)},thin,blue] (2,10) to[out=270,in=90] (0,8);
		\draw (4,10) to[out=270,in=90] (2,8);
		\draw[shift={(-0.1,0)},thin,blue] (4,10) to[out=270,in=90] (2,8);
		\draw(1.7,7.5) rectangle (4.3,8);
		\draw[thin] (0,8) -- (0,7);
		\draw[shift={(-0.1,0)},thin,blue] (0,8) -- (0,7);
		\draw[thin] (2,7.5) -- (2,7);
		\draw[shift={(-0.1,0)},thin,blue] (2,7.5) -- (2,7);
		\draw[thin] (4,7.5) -- (4,6.5);
		\draw(-0.3,6.5) rectangle (2.3,7);
		\draw (0,6.5) to[out=270,in=90] (2,4.5);
		\draw (2,6.5) to[out=270,in=90] (4,4.5);
		\draw (4,6.5) to[out=270,in=90] (0,4.5);
		\draw[shift={(-0.1,0)},thin,blue] (0,6.5) -- (0,4.5);
	\end{tikzpicture}
	\quad + \quad
	\begin{tikzpicture}[scale=0.3,baseline=(n.base)]
		\node (n) at (0.7,7) {};
		\draw (0,10) to[out=270,in=90] (2,8);
		\draw[shift={(-0.1,0)},thin,blue] (0,10) to[out=270,in=90] (2,8);
		\draw (2,10) to[out=270,in=90] (4,8);
		\draw[shift={(-0.1,0)},thin,blue] (2,10) to[out=270,in=90] (4,8);
		\draw (4,10) to[out=270,in=90] (0,8);
		\draw[shift={(-0.1,0)},thin,blue] (4,10) to[out=270,in=90] (0,8);
		\draw(1.7,7.5) rectangle (4.3,8);
		\draw[thin] (0,8) -- (0,7);
		\draw[shift={(-0.1,0)},thin,blue] (0,8) -- (0,7);
		\draw[thin] (2,7.5) -- (2,7);
		\draw[shift={(-0.1,0)},thin,blue] (2,7.5) -- (2,7);
		\draw[thin] (4,7.5) -- (4,6.5);
		\draw(-0.3,6.5) rectangle (2.3,7);
		\draw (0,6.5) to[out=270,in=90] (4,4.5);
		\draw (2,6.5) to[out=270,in=90] (0,4.5);
		\draw (4,6.5) to[out=270,in=90] (2,4.5);
		\draw[shift={(-0.1,0)},thin,blue] (0,6.5) -- (0,4.5);
	\end{tikzpicture}~.
	\]
	We say $f$ is a $\Sigma$-double Lie bracket if $DJ_f=0$. In this case, we say $L$ is a $\Sigma$-\emph{double Lie} algebra.
	For $(L,\DB{-}{-}_L)$ and $(H,\DB{-}{-}_H)$, two $\Sigma$-double Lie algebras, a morphism $\phi : L \rightarrow H$ of $\mathtt{C}$ is a  $\Sigma$-double Lie algebra morphism if the following diagram commutes:
	\[
	\xymatrix@C=3pc{
		\Sigma L \otimes \Sigma L \ar[r]^{\Sigma \phi\otimes \Sigma \phi} \ar[d]_{\DB{-}{-}_L} \ar@{}[rd]|\commu & 
		\Sigma H \otimes \Sigma H \ar[d]^{\DB{-}{-}_H} \\
		\Sigma L\otimes L \ar[r]_{\Sigma\phi\otimes\phi} & \Sigma H\otimes H~;
	}
	\]
	we denote by $\Sigma$-$\catDLie_\C$ the category of $\Sigma$-double Lie algebras. 
\end{definition}

\begin{rem}\label{rem::dJacobi_stable_Z3Z}
	The double jacobiator is stable under the diagonal action of $\Z/3\Z$, i.e. $DJ_f = \Sigma\tau_{A,A\otimes A}DJ_f\tau_{\Sigma A\otimes \Sigma A, \Sigma A}$.
\end{rem}

\begin{definition}[The category $\Sigma$-$\DPoiss_\mathtt{C}$]\label{def::doubleJacobi}
	Let $(A,\mu)$ be a monoid of $\mathtt{C}$ with a  $\Sigma$-double Lie bracket $f:=\DB{-}{-} : \Sigma A \otimes \Sigma A \longrightarrow \Sigma A\otimes A$,	which also satisfies the left derivation property: such a $\Sigma$-double bracket is called a \emph{$\Sigma$-double Poisson bracket} and $A$ is a \emph{$\Sigma$-double Poisson algebra} in the category $\mathtt{C}$.
	Let $(A,f_A)$ and $(B,f_B)$ be two $\Sigma$-double Poisson algebras  and $\phi : A \rightarrow B$ a morphism in $\mathtt{C}$. We say $\phi$ is a \emph{$\Sigma$-double Poisson algebra morphism} if $\phi$ is a monoid morphism and a $\Sigma$-double Lie morphism. We denote by $\Sigma$-$\DPoiss_\mathtt{C}$ the category of $\Sigma$-double Poisson algebras in $\mathtt{C}$, so that there are forgetful functors
	\[
	\Sigma\mbox{-}\DPoiss_\mathtt{C} \longrightarrow \Sigma\mbox{-}\mathtt{DLie}_\C \quad \mbox{and} \quad \Sigma\mbox{-}\DPoiss_\mathtt{C} \longrightarrow \mathtt{As}(\C).
	\]
\end{definition}

\begin{definition}[Left $\Sigma$-Leibniz algebra]
	Let $L$ be a object in $\C$, $\Sigma$ an invertible object in $\C$ and $f:\Sigma L\otimes \Sigma L \rightarrow \Sigma L$. We say $(L,f)$ is a \emph{left $\Sigma$-Leibniz algebra} if $f$ satisfies the Leibniz identity:
	\[
		f(\Sigma A\otimes f)= f(f\otimes \Sigma A) + f(\Sigma A\otimes f)(\tau_{\Sigma A,\Sigma A}\otimes \Sigma A).
	\]
\end{definition}

\begin{prop}[cf. \cite{VdB08}]\label{double_Poisson_Leibniz}
	Let $(A,\mu)$ be a monoid in  $\mathtt{C}$ equipped with a $\Sigma$-double Poisson bracket $f$. Then $(\Sigma A,\Sigma\mu f)$ is a $\Sigma$-left Leibniz algebra in $\mathtt{C}$. 
\end{prop}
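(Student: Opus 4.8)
The plan is to set $b:=\Sigma\mu\circ f\colon \Sigma A\otimes\Sigma A\to\Sigma A$ and to verify for $b$ the left Leibniz identity of the preceding definition, i.e.\ the equality of morphisms $\Sigma A^{\otimes 3}\to\Sigma A$
\[
b(\Sigma A\otimes b)=b(b\otimes\Sigma A)+b(\Sigma A\otimes b)(\tau_{\Sigma A,\Sigma A}\otimes\Sigma A).
\]
Here the suspension $\Sigma$ plays a purely formal role: it is transported through every composite by the isomorphisms $\rho$ and $\rho\tau_{\Sigma,\Sigma^-}$, so the computation runs exactly parallel to the unshifted case treated by Van den Bergh \cite{VdB08}; one may therefore keep $\Sigma$ inert and concentrate on the combinatorics of $\mu$, $f$ and the symmetries $\tau$.

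First I would expand the three composites appearing in the identity. Each \emph{outer} occurrence of $b$ carries one copy of $\mu$, which sits on the second argument of the outer double bracket in $b(\Sigma A\otimes b)$ and its twist, and on the first argument in $b(b\otimes\Sigma A)$. Applying the right derivation property (Proposition \ref{derivation}) to the former two terms, and the left derivation property (Definition \ref{def::dcrochet_decale}) together with antisymmetry to the latter, rewrites each of the three terms as a sum of two composites of the shape ``apply $f$ twice, then collapse everything with $m:=\Sigma\mu(\Sigma A\otimes\mu)$''. The Leibniz identity thus becomes a balance between six such double-bracket-then-multiply morphisms; three of them are of the ``left'' type (an inner bracket hitting the first output strand) and three of the ``right'' type (hitting the second strand).

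The heart of the argument is then to produce the two linear relations among these six terms from the vanishing of the double Jacobiator $DJ_f=0$ (Definition \ref{def::doubleJacobi}). Each cyclic summand of $DJ_f$ is $\DB{-}{-}_l=(f\otimes A)(\Sigma A\otimes f)$ conjugated by symmetries, so composing $DJ_f=0$ with the total multiplication $m$ and using antisymmetry to replace every inner bracket $\DB{x}{y}$ by $-\Sigma\tau_{A,A}\DB{y}{x}\tau_{\Sigma A,\Sigma A}$ and the associativity of $\mu$ yields the first relation, which matches the three ``left'' terms. The second relation comes from the right-handed presentation of the Jacobiator: by antisymmetry the identity $DJ_f=0$ can be rewritten with the inner brackets hitting the opposite strand, and this rewriting is legitimate because $DJ_f$ is stable under the diagonal $\Z/3\Z$-action (Remark \ref{rem::dJacobi_stable_Z3Z}); composing this right-handed form with $m$ (and, where needed, with a reordered product built from $\mu$ and $\tau$) delivers the relation matching the three ``right'' terms. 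Adding the two relations reconstitutes the six-term identity, so the Leibniz defect of $b$ vanishes.

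The main obstacle is the sign and symmetry bookkeeping, not any conceptual difficulty: one must track the Koszul signs hidden in the transpositions $\tau$ (in the model $\Ch_k$, $\tau_{A,B}(a\otimes b)=(-1)^{|a||b|}b\otimes a$), control the suspension isomorphisms each time the internal and external bimodule structures of $\Sigma A\otimes A$ are exchanged, and match the precise linear orderings of the triple products created by the non-commutativity of $\mu$, in particular aligning the ``left'' versus ``right'' inner brackets correctly. I expect the cleanest way to keep this under control is to carry out the whole verification in the graphical calculus of coloured directed graphs used in the definitions, where antisymmetry, the two derivation properties and the cyclic symmetry of $DJ_f$ become local rewriting moves; the two multiplied forms of $DJ_f=0$ then visibly assemble into the Leibniz diagram.
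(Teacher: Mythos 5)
Your proposal is correct and follows essentially the argument the paper itself omits, delegating it via ``cf.\ \cite{VdB08}'' to Van den Bergh's unshifted case (\cite[Prop.~2.4.2]{VdB08}): expand the three Leibniz composites of $b=\Sigma\mu f$ by the left derivation property (Definition \ref{def::dcrochet_decale}) and the right derivation property (Proposition \ref{derivation}), then use antisymmetry to recognize the six resulting ``bracket-twice-then-multiply'' morphisms. Your three-plus-three grouping is exactly right: the six terms assemble into the total multiplication $\Sigma\mu(\Sigma A\otimes \mu)$ applied to two instances of the vanishing double jacobiator $DJ_f$ --- one at the arguments in their given order, one with the first two arguments exchanged (the latter being $DJ_f=0$ precomposed with a symmetry, so the $\Z/3\Z$-stability of Remark \ref{rem::dJacobi_stable_Z3Z} is a convenience rather than a necessity) --- and the suspension $\Sigma$ indeed rides along inertly through the $\tau$-twists built into the axioms, as the paper's graphical calculus makes visible.
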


\subsection{Double Poisson structure on a free monoid}\label{sect::DP_free_monoid}
Fix $(A,\mu)$ a monoid in $\mathtt{C}$ and $M$ an $A$-bimodule. We consider the free $A$-algebra on $M$:
\[
T_A(M):= A\oplus \bigoplus_{n\in\N^*} M^{\otimes_A n} 
\]
satisfying the following universal property: for an $A$-algebra $B$ with an $A$-bimodule morphism $M\rightarrow B$, we have the following canonical extension
\[
\xymatrix{
	M \ar[r] \ar@{^{(}->}[d] & B \\
	T_A(M) \ar@{.>}@/_1pc/[ru]_{\exists !\phi} 
}
\]
with $\phi$ an $A$-algebra morphism. We have the injection $A\oplus M \hookrightarrow T_A M$. Then, we have the following result:
\begin{lem}
	Let $A$ be a monoid and $M$ an $A$-bimodule. An antisymmetric $\Sigma$-double bracket on $T_A(M)$ that satisfies the left derivation property is determined by its restrictions to $\Sigma A\otimes \Sigma A$, $\Sigma M\otimes\Sigma  A$ and $\Sigma M\otimes\Sigma  M \subset \Sigma T_A(M)\otimes  \Sigma T_A(M)$.
\end{lem}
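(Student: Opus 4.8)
The statement is one of uniqueness, so the plan is to show that any two antisymmetric $\Sigma$-double brackets on $T_A(M)$ satisfying the left derivation property and agreeing on the three sub-objects $\Sigma A\otimes\Sigma A$, $\Sigma M\otimes\Sigma A$ and $\Sigma M\otimes\Sigma M$ must coincide on all of $\Sigma T_A(M)\otimes\Sigma T_A(M)$. The two mechanisms I would exploit are the derivation property in each slot and antisymmetry, organised along the tensor grading $T_A(M)=A\oplus\bigoplus_{n\geq 1}M^{\otimes_A n}$, where I write $M^{\otimes_A 0}=A$ so that the generators are exactly the homogeneous components of degree $0$ and $1$.

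First I would invoke Proposition \ref{derivation} to upgrade the hypothesis: a bracket of this type is automatically a derivation in its second variable too, so both the left and the right derivation identities are available. Then antisymmetry fills in the one generator pair not listed: from $\DB{a}{m}=-\Sigma\tau_{A,A}\DB{m}{a}\tau_{\Sigma A,\Sigma A}$ the restriction to $\Sigma A\otimes\Sigma M$ is forced by the restriction to $\Sigma M\otimes\Sigma A$. Together with the three hypotheses this pins down the bracket on every pair of generators, i.e. on $\Sigma X\otimes\Sigma Y$ for $X,Y\in\{A,M\}$.

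The heart of the proof is a double induction on the bidegree $(p,q)$ of the two arguments. For $p\geq 2$ I would factor an element of $M^{\otimes_A p}$ as a product $\mu(x\otimes y)$ with $\deg x=p-1$ and $\deg y=1$ and feed it into the left derivation identity: this rewrites $\DB{xy}{z}$ through $\DB{x}{z}$ and $\DB{y}{z}$ via the internal actions $\mu^i_A$ and ${}_A\mu^i$, lowering the first degree while leaving $z$ untouched, so induction on $p$ collapses the first slot to degree $\leq 1$. The right derivation identity does the symmetric job in the second slot without perturbing the already-reduced first slot, collapsing $q$ to $\leq 1$. What survives at the base of both inductions is precisely the four generator pairs handled in the previous step, which closes the recursion and shows the two brackets agree.

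The step I expect to demand the most care is making this factorisation rigorous at the level of morphisms rather than elements: the reduction really says that the two brackets agree after precomposition with the multiplication $M^{\otimes_A(p-1)}\otimes M\to M^{\otimes_A p}$, and to cancel this map I need it to be an epimorphism (it is the defining quotient onto the balanced tensor $\otimes_A$) together with the fact that the functors $\Sigma\otimes-$ and $-\otimes\Sigma Y$ preserve that epimorphism. The accompanying bookkeeping — tracking how the symmetry maps $\tau$ and the internal bimodule actions in the two derivation identities interact with the grading and with the passage from $\otimes$ to $\otimes_A$ — is routine but must be carried out carefully, so that each application of the recursion genuinely decreases the total degree and remains compatible with the $A$-balancing.
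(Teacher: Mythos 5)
Your argument is correct and is essentially the extension-by-derivation argument that the paper treats as immediate (the lemma is stated there without proof, and the neighbouring proposition's proof just says the bracket is ``extended by derivation''): you invoke Proposition \ref{derivation} to get the right derivation property, use antisymmetry to recover the restriction to $\Sigma A\otimes\Sigma M$, and run a double induction on the tensor bidegree of $T_A(M)=A\oplus\bigoplus_{n\geq 1}M^{\otimes_A n}$, which is exactly the intended reasoning. The epimorphism point you flag is the only genuine hypothesis, and it holds under the standing assumptions implicitly required for $T_A(M)$ to exist as a monoid in $\C$ --- namely that $-\otimes X$ preserves the (reflexive) coequalizers defining $\otimes_A$ and the coproduct decomposition of $T_A(M)$ --- so that the quotient $M^{\otimes_A(p-1)}\otimes M\twoheadrightarrow M^{\otimes_A p}$ stays epimorphic after tensoring with $\Sigma$ and with $\Sigma Y$, and a morphism out of $\Sigma T_A(M)\otimes\Sigma T_A(M)$ is indeed determined by its restrictions to the bihomogeneous summands.
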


Let $A$ and $M$ fixed in $\C$, with $A$ a monoid $M$ an $A$-bimodule and let $\DB{-}{-}$ be a $\Sigma$-double bracket on $T_A(M)$. We will define several classes of Poisson double brackets on $T_A(M)$: we name these double bracket as in \cite{ORS13}.

\subsubsection{Constant double bracket}

\begin{definition}[Constant double Poisson bracket]
	We say $\DB{-}{-}$ is a \emph{constant} Poisson double bracket if its restrictions to $\Sigma A\otimes \Sigma A$ and $\Sigma M\otimes \Sigma A$ are null and if its restriction to $\Sigma M\otimes\Sigma M$ takes values in $\Sigma A\otimes A$, i.e. $\DB{-}{-}$ is completely defined by the antisymmetric $\Sigma$-double bracket
	\[
	\DB{-}{-} : \Sigma M\otimes \Sigma M \longrightarrow \Sigma A\otimes A .
	\]
\end{definition}
\begin{exmp}
	If $A=\mathds{1}$, a constant $\Sigma$-double Poisson bracket on  $T_{\mathds{1}}(M)$, corresponds to an antisymmetric bilinear form on $\Sigma M$.
\end{exmp}

\subsubsection{Linear double bracket}

\begin{definition}[Linear double Poisson bracket]
		We say $\DB{-}{-}$ is a \emph{linear} Poisson double bracket if its restriction to $\Sigma A\otimes \Sigma A$ is null and its restrictions to $\Sigma M\otimes \Sigma A$ and $\Sigma M\otimes\Sigma M$ take values respectively in  $\Sigma A\otimes A$  and $\Sigma (M\otimes A\oplus A\otimes M)$, that is $\DB{-}{-}$ is determined by 
	\begin{align*}
	\DB{-}{-} : & \Sigma M\otimes \Sigma A \longrightarrow \Sigma A\otimes A \quad \mbox{and} \\
	\DB{-}{-} : & \Sigma M\otimes \Sigma M \longrightarrow \Sigma (M\otimes A\oplus A\otimes M). 
	\end{align*}
	We denote by $\Sigma\mbox{-}\mathtt{DPFree}_A^{lin}$ the category where objects are free $A$-algebras with a linear $\Sigma$-double Poisson bracket, the morphisms are  $\Sigma$-double Poisson algebra morphisms induced by an $A$-bimodule morphism.
\end{definition}

\begin{exmp}\label{exmp::DLR_associatif}
	(cf. \cite[Sect. 2]{ORS13}) We consider $\DB{-}{-}$, a linear $\mathds{1}$-double Poisson bracket on $T_{\mathds{1}} M$, which is determined by morphisms $f: M\otimes M \longrightarrow M\otimes \mathds{1}$ and $g: M\otimes \mathds{1}  \longrightarrow \mathds{1} \otimes \mathds{1}$. By the derivation property of $\DB{-}{-}$, the morphism $g$ is null and the double Jacobi identity gives us the identity
	\[
	pr_{M\otimes \mathds{1}\otimes \mathds{1}}\TB{-}{-}{-}|_{M^{\otimes 3}} =0;
	\]
	so we have $(f\otimes \mathds{1})(M\otimes f) - (\tau_{\mathds{1},M}\otimes \mathds{1})(\mathds{1}\otimes f)(f\otimes M) = 0$, which is equivalent to
	\[
	f(M\otimes f) = f(f\otimes M).
	\]
	This corresponds to an associative monoid structure on $M$ (without unit). Futhermore, we have the equivalence of categories 
	\[
	\mathds{1}\mbox{-}\mathtt{DPFree}_\mathds{1}^{lin} \cong \mathtt{As}(\C).
	\]
\end{exmp}

\subsubsection{Quadratic double Poisson bracket}

\begin{definition}[Quadratic double Poisson bracket]\label{def::dPoisson_quadratique} 
	We say that $\DB{-}{-}$ is a \emph{quadratic} double Poisson bracket if its restrictions to $\Sigma A\otimes \Sigma A$ and $\Sigma M\otimes \Sigma A$ are trivial and its restriction to $\Sigma M\otimes\Sigma M$ takes values in $\Sigma M\otimes M$, i.e. $\DB{-}{-}$ is determined by the antisymmetric $\Sigma$-double bracket
	\[
	\DB{-}{-} :  \Sigma M\otimes \Sigma M \longrightarrow \Sigma M\otimes M. 
	\]		
	We denote by $\Sigma\mbox{-}\mathtt{DPFree}_A^{quad}$ the subcategory of free $A$-algebras with a quadratic $\Sigma$-double Poisson bracket, where morphisms are induced by $A$-bimodules morphisms.
\end{definition}
\begin{prop}
	The free associative functor $T_{\mathds{1}}(-)$ induces an equivalence of categories
	\[
	T_\mathds{1}(-) : \Sigma\mbox{-}\mathtt{DLie}_\C \overset{\cong}{\longrightarrow} \Sigma\mbox{-}\mathtt{DPFree}_{\mathds{1}}^{quad}.
	\]
\end{prop}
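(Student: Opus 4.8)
The plan is to exhibit $T_{\mathds 1}(-)$ explicitly, check it is well defined on objects and morphisms, and then produce a strict inverse on objects together with a bijection on $\Hom$-sets. The entire content lies in the fact that the double Jacobi identity on $T_{\mathds 1}(M)$ is controlled by the generators $M$.

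On objects, I would start from a $\Sigma$-double Lie algebra $(M,f)$, that is an antisymmetric $\Sigma$-double bracket $f\colon \Sigma M\otimes\Sigma M\to\Sigma M\otimes M$ with $DJ_f=0$ (Definition \ref{def::algebre_doubleLie}). By the Lemma preceding the definition of a constant double bracket, there is a unique antisymmetric $\Sigma$-double bracket $\widehat f$ on $T_{\mathds 1}(M)$ satisfying the left derivation property whose restriction to $\Sigma M\otimes\Sigma M$ is $f$ and whose restrictions to $\Sigma\mathds 1\otimes\Sigma\mathds 1$ and $\Sigma M\otimes\Sigma\mathds 1$ are trivial; the latter two vanish automatically, since any bracket with the unit $\iota\colon\mathds 1\to T_{\mathds 1}(M)$ is zero (the unit remark following Definition \ref{def::dcrochet_decale}). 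As $f$ takes values in $\Sigma M\otimes M$, the bracket $\widehat f$ is quadratic in the sense of Definition \ref{def::dPoisson_quadratique}.

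The heart of the proof is to verify that $\widehat f$ is a double Poisson bracket, i.e. that $DJ_{\widehat f}=0$. Combining the left derivation property with its right-hand counterpart (Proposition \ref{derivation}) and the diagonal $\Z/3\Z$-invariance of the jacobiator (Remark \ref{rem::dJacobi_stable_Z3Z}), one shows, exactly as in \cite[Sect.~2.3]{VdB08}, that the double jacobiator $\TB{-}{-}{-}$ is a derivation in each of its three entries. A morphism out of $\Sigma T_{\mathds 1}(M)^{\otimes 3}$ that is a derivation in every variable is determined by its restriction to $\Sigma M^{\otimes 3}$, because $M$ generates $T_{\mathds 1}(M)$ as a monoid; moreover the quadraticity (weight preservation of $\widehat f$) guarantees that only $f$ contributes to this restriction, so that $DJ_{\widehat f}|_{\Sigma M^{\otimes 3}}$ is precisely $DJ_f$. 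Since $DJ_f=0$ by hypothesis, we conclude $DJ_{\widehat f}=0$, whence $(T_{\mathds 1}(M),\widehat f)$ is a quadratic $\Sigma$-double Poisson algebra. I expect this step --- establishing the triple-derivation property of $\TB{-}{-}{-}$ and that vanishing on generators forces vanishing everywhere --- to be the main obstacle.

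It remains to handle morphisms and invertibility. A $\Sigma$-double Lie morphism $\phi\colon M\to N$ is a $\C$-morphism intertwining the brackets; it extends uniquely to a $\mathds 1$-algebra morphism $T_{\mathds 1}(\phi)$, whose compatibility with $\widehat f$ and $\widehat g$ need only be checked on generators, where it reduces to the bracket-compatibility of $\phi$. Thus $T_{\mathds 1}(\phi)$ is a quadratic $\Sigma$-double Poisson morphism induced by a bimodule morphism, and functoriality is immediate. Conversely, sending a quadratic $\Sigma$-double Poisson algebra $(T_{\mathds 1}(M),\DB{-}{-})$ to the restriction $\DB{-}{-}|_{\Sigma M\otimes\Sigma M}$ produces an antisymmetric bracket on $M$ whose double jacobiator is the restriction of the vanishing jacobiator to $\Sigma M^{\otimes 3}$, hence a $\Sigma$-double Lie structure; the uniqueness clause of the Lemma gives $\widehat{\DB{-}{-}|_{\Sigma M\otimes\Sigma M}}=\DB{-}{-}$, so this assignment is a strict inverse on objects, yielding essential surjectivity. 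Finally, since the morphisms on both sides are exactly the $\C$-morphisms $M\to N$ compatible with the generating brackets, the induced map on $\Hom$-sets is a bijection, establishing full faithfulness. Together these give the asserted equivalence of categories.
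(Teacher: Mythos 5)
Your proposal is correct and takes essentially the same route as the paper, whose entire proof consists of the single line ``We extend the $\Sigma$-double Lie bracket by derivation'': you have merely supplied the details the paper leaves implicit (uniqueness of the extension via the lemma on free algebras, vanishing of brackets with the unit, reduction of the double Jacobi identity to generators through the derivation property of the jacobiator combined with its $\Z/3\Z$-invariance, and the inverse-by-restriction on objects and morphisms). No gaps.
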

\begin{proof}
	We extend the $\Sigma$-double Lie bracket by derivation.
\end{proof}
\begin{exmps}
	\begin{enumerate}
		\item In \cite[Sect. 2.1]{ORS13}, Odesskii \emph{et al.} give a complete classification of quadratic double Poisson brackets on $\mathbb{C}\langle x,y\rangle$ with $|x|=|y|=0$.
		\item In \cite{Sok13}, Sokolov gives a complete classification of quadratic double Poisson brackets on $\mathbb{C}\langle x,y,z\rangle$ with $|x|=|y|=|z|=0$.
	\end{enumerate} 
\end{exmps}

\section{Double Lie-Rinehart algebras}
In this section, we extend the notion of a double Lie-Rinehart algebra, first defined by Van den Bergh in \cite{VdB08-2}, which is a noncommutative version of Lie-Rinehart algebras.

\subsection{Recollections on Lie-Rinehart algebras}
For a more complete exposition, the reader is refered to \cite{Kap07}, \cite[Sect. 13.3.8]{LV12} et \cite[Sect. 5.1.2]{Alv15}.
\begin{definition}[Lie-Rinehart algebra]
	\label{def::Lie_Rinehart_algebra}
	Let $(\C,\otimes,\tau)$ be an additive symmetric monoidal category and $A$ a commutative monoid in $\C$. A Lie algebra $(\g,[-,-])$ is a \emph{Lie-Rinehart algebra} over $A$ if $\g$ is an $A$-module for $_A\mu :A\otimes \g \rightarrow \g $, if $A$ is a $\g $-module for $\rho: \g\otimes A \rightarrow A$ which is called \emph{the anchor} and these module structures are compatible, i.e. satisfy the following properties.
	\begin{enumerate}
		\item \label{def::LR_1} The Lie algebra $\g $ acts by derivations on $A$, i.e. satisfies 
			\[
				\rho\circ \mu=\mu(A\otimes \rho)(\tau_{A,\g}\otimes A) + \mu(\rho\otimes A).
			\]
		\item \label{def::LR_2} We have the compatibility in $\Hom(\g^{\otimes 2}\otimes A,A)$:
			\[
				\rho\big([-,-]\otimes A\big)=\rho(\g\otimes \rho) - \rho(\g\otimes\rho)(\tau_{\g,\g}\otimes A).
			\]
		\item We have the \emph{Leibniz relation}:
		\[
		[-,-](\g\otimes _A\!\mu)=[-,-](\rho\otimes \g) +_A\!\!\mu(A \otimes[-,-])(\tau_{A,\g}\otimes \g).
		\] 
	\end{enumerate}
	Let $(M,\rho_M,[-,-]_M)$ and $(N,\rho_N,[-,-]_N)$ be two Lie-Rinehart algebras over $A$. An $A$-module morphism $\phi : M \rightarrow N$ is a \emph{morphism of Lie-Rinehart algebras} if $\phi$ commutes with the Lie bracket.
\end{definition}
\begin{rem}
	In the case where $\C$ is closed, the condition \eqref{def::LR_1} is equivalent to the anchor correspond to a morphism $\rho^*:\g \rightarrow \Der(A)$. Then, the condition \eqref{def::LR_2} is equivalent to $\rho^*$ is a Lie algebra morphism.
\end{rem}

For examples, the reader is refered to \cite[Ex. 1.3.3]{Kap07} or \cite[Chap. 5]{Alv15}.

\begin{prop}[{cf. \cite[Prop. 13.3.8]{LV12}}]\label{prop::LieRin_Pois}
	Any  Lie-Rinehart algebra $(A,L)$ gives rise to a Poisson algebra $P= A\oplus L$, where $A\oplus L$ is the square-zero extension as algebra, such that the two operations $\mu$ and $\{-,-\}$ take values as follows:
	\[
	\begin{array}{rl}
	A\otimes A \overset{\mu}{\longrightarrow} A, &  A\otimes A \overset{\{-,-\}}{\longrightarrow} 0, \\
	A\otimes L \overset{\mu}{\longrightarrow} L, &  L\otimes A \overset{\{-,-\}}{\longrightarrow} A, \\
	L\otimes L \overset{\mu}{\longrightarrow} 0, &  L\otimes L \overset{\{-,-\}}{\longrightarrow} L. 
	\end{array}
	\]	
	Conversely, any Poisson algebra $P$, whose underlying vector space can be split as $P= A\oplus L$ and such that the two operations take values as indicated above, defines a Lie-Rinehart algebra. The two constructions are inverse to each other.
\end{prop}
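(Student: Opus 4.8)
The plan is to prove the equivalence between Lie-Rinehart algebras over a commutative monoid $A$ and Poisson algebras of the special square-zero form $P = A \oplus L$ by checking that the Poisson axioms decompose, component by component, exactly into the three defining conditions of a Lie-Rinehart algebra. The key organizing principle is that the product $\mu$ and the bracket $\{-,-\}$ on $P$ are completely determined by their restrictions to the summands $A \otimes A$, $A \otimes L$, $L \otimes A$, and $L \otimes L$, because $\otimes$ is additive and distributes over the direct sum $P = A \oplus L$. So the strategy is purely diagrammatic bookkeeping: fix the data of a Lie-Rinehart algebra $(A, L, \rho, [-,-])$, define $\mu$ and $\{-,-\}$ on $P = A \oplus L$ by the table in the statement (with $\mu$ the square-zero extension, $\{-,-\}|_{L \otimes A} = \rho$ composed with the symmetry, $\{-,-\}|_{L \otimes L} = [-,-]$, and all other components zero), and verify the Poisson axioms.

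First I would treat the associativity and commutativity of $\mu$: since $A$ is a commutative monoid and $L$ is an $A$-module with the square-zero convention $\mu|_{L \otimes L} = 0$, associativity on the three relevant mixed components reduces to the associativity of the $A$-action $_A\mu$ on $L$ and the associativity of $\mu_A$, while commutativity follows from the symmetry of $\C$ together with the compatibility of the left and right $A$-actions on $L$ (for a Lie-Rinehart algebra these agree up to $\tau$). Next I would check antisymmetry of $\{-,-\}$: on $L \otimes L$ this is the antisymmetry of the Lie bracket $[-,-]$, and the $A \otimes L$ versus $L \otimes A$ components are forced to be antisymmetric partners of the anchor $\rho$ by the prescribed vanishing of $\{-,-\}|_{A \otimes A}$. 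The Jacobi identity for $\{-,-\}$ then splits by the grading: the purely $L \otimes L \otimes L$ part recovers the Jacobi identity of the Lie algebra $\g = L$, while the components involving one $A$-factor recover precisely condition \eqref{def::LR_2}, the compatibility of the anchor with the bracket expressed in $\Hom(\g^{\otimes 2} \otimes A, A)$.

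The heart of the matter is the Leibniz rule $\{ab, c\} = a\{b,c\} + \{a,c\}b$ for the Poisson structure, which is where the anchor and the module structures genuinely interact. I would evaluate this identity on each bigraded component of $P \otimes P \otimes P$. The component with $a \in A$, $b \in L$, $c \in A$ reproduces the derivation property \eqref{def::LR_1}, namely that $\g$ acts on $A$ by derivations; the component with $a \in A$, $b, c \in L$ (and its symmetric variants) reproduces the Leibniz relation of Definition \ref{def::Lie_Rinehart_algebra}\,(3), expressing the compatibility of the Lie bracket with the $A$-module structure $_A\mu$ on $L$; and the remaining components vanish identically thanks to the square-zero condition $\mu|_{L \otimes L} = 0$ and $\{-,-\}|_{A \otimes A} = 0$. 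This component-by-component matching is exactly a bijective correspondence between the Poisson data and the Lie-Rinehart data.

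For the converse, given a Poisson algebra $P$ whose underlying object splits as $A \oplus L$ with the operations taking the prescribed forms, I would simply read the construction backwards: set $[-,-] := \{-,-\}|_{L \otimes L}$, define the anchor $\rho := \{-,-\}|_{L \otimes A}$, and take the $A$-module structure on $L$ from $\mu|_{A \otimes L}$. The same component decomposition of the Poisson axioms then yields the three Lie-Rinehart conditions, and one checks that the two assignments are mutually inverse on the level of structure morphisms, so the constructions are inverse to each other as claimed. The main obstacle I anticipate is not conceptual but notational: keeping the symmetry isomorphisms $\tau$ in the correct places when restricting each axiom to a fixed bigraded piece, since in the non-$k$-linear categorical setting every rearrangement of tensor factors must be mediated explicitly by $\tau$, and the anchor appears in the asymmetric positions $L \otimes A$ and $A \otimes L$ that differ by a $\tau$. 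I would therefore be careful to phrase each verification as an equality of morphisms in $\C$ rather than a pointwise computation on elements.
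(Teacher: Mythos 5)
You should know at the outset that the paper contains no proof to compare against: the proposition is quoted from \cite[Prop.~13.3.8]{LV12}, and the only justification offered is the subsequent remark that a Lie--Rinehart algebra is an algebra over the two-coloured operad $\mathcal{L}ie\mathcal{R}in$ (citing \cite[Prop.~3.6.2]{VdL04}). Your componentwise strategy is the natural one, and most of your dictionary is right: the $L^{\otimes 3}$ component of Jacobi gives the Lie identity, the $(L,L,A)$ component of Jacobi gives condition \eqref{def::LR_2}, and the component $a\in A$, $b,c\in L$ of the Leibniz rule gives the Leibniz relation of definition \ref{def::Lie_Rinehart_algebra}. But your assertion that ``the remaining components vanish identically thanks to the square-zero condition'' is false, and it is exactly the delicate point. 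Take $x,y\in L$ and $c\in A$ in the Poisson--Leibniz rule $\{x\cdot y,c\}=x\cdot\{y,c\}+\{x,c\}\cdot y$: the left-hand side is $0$ because $\mu|_{L\otimes L}=0$, while the right-hand side is $\rho(y\otimes c)\cdot x+\rho(x\otimes c)\cdot y\in L$ (bracket component $L\otimes A\to A$ followed by the module action), which is \emph{symmetric} in $x$ and $y$ and therefore cannot cancel; it vanishes for all $x,y,c$ only if the anchor is essentially trivial. Concretely, for $(A,L)=\big(k[t],\Der(k[t])\big)$ with $x=y=\partial_t$ and $c=t$ one gets $2\partial_t\neq 0$, and likewise for $(C^\infty(M),\mathrm{Vect}(M))$. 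So the square-zero extension of a Lie--Rinehart algebra with nonzero anchor does \emph{not} literally satisfy the Leibniz identity; the forward direction of your verification breaks at this component, and the ``bijective correspondence'' you claim would in fact match split Poisson algebras with Lie--Rinehart algebras subject to the extra identity $\rho(y\otimes c)\cdot x+\rho(x\otimes c)\cdot y=0$, not with all of them.

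The repair your proof needs is to make explicit the colour-wise reading indicated by the remark following the proposition: with two colours $A$ and $L$, the coloured operad $\mathcal{L}ie\mathcal{R}in$ has no product operation with inputs $(L,L)$ at all, so the offending $(L,L,A)$-component of the Leibniz rule is simply never imposed---the statement is about imposing only the well-coloured components of the Poisson relations, not about an honest Poisson structure on the object $A\oplus L$; under that reading your componentwise dictionary does become an exact matching. Two smaller corrections: your converse direction is sound as written (a genuinely split Poisson algebra satisfies the extra identity, and discarding it still yields a Lie--Rinehart algebra), but the component $a\in A$, $b\in L$, $c\in A$ of $\{ab,c\}=a\{b,c\}+\{a,c\}b$ yields $A$-linearity of the anchor, not the derivation property \eqref{def::LR_1}; the latter comes from the component $a,b\in A$, $c\in L$.
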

\begin{rem} (cf. \cite[Prop. 3.6.2]{VdL04})
This result is an operadic one. In fact, a Lie-Rinehart algebra is an algebra over the two-coloured operad $\mathcal{L}ie\mathcal{R}in$.
\end{rem}

\subsection{Double Lie-Rinehart algebras}\label{subsect::double_Lie_Rinehart}
Now, we extend the definition of the noncommutative version of Lie-Rinehart algebras: the double Lie-Rinehart algebras (called double Lie algebroids by Van den Bergh in \cite{VdB08-2}).
\begin{nota}
	Let $A$ a monoid in an additive symmetric monoidal category $(\C,\otimes,\tau)$, $M$ and $N$ two $A$-bimodules and $\phi:M\rightarrow N$ a morphism of $A$-bimodules. We canonically extend the morphism $\phi$ to an $A$-bimodule morphism $\tilde{\phi} : A\otimes M \oplus M\otimes A \longrightarrow A\otimes N \oplus N\otimes A$ where the structures of $A$-bimodule are induced by those of $M$ and $N$ and such that the restrictions to $A\otimes M$ and $M\otimes A$ are given by
	\[
		\tilde{\phi}|_{A\otimes M}= A\otimes \phi \quad \mbox{ and } \quad \tilde{\phi}|_{M\otimes A}= \phi\otimes A.
	\]
	Hereafter, we don't distinguish between $\phi$ and $\tilde{\phi}$.
\end{nota}
\begin{definition}[$\Sigma$-Double Lie-Rinehart algebra]\label{def::dLieRinehart}
	Let $(A,\mu)$ be a monoid in an additive symmetric monoidal category $(\C,\otimes,\tau)$ and $(M,_A\!\mu, \mu_A)$ an $A$-bimodule. We say that $M$ is a \emph{$\Sigma$-double Lie-Rinehart algebra over $A$} if we have
	\begin{enumerate}
		\item \label{def::anchor} an $A$-bimodule morphism called \emph{the anchor}
		\[
			\rho : \Sigma M\otimes \Sigma A \rightarrow\Sigma A\otimes A
		\]
		with, for the left term, the $A$-bimodule structure induced by that of $M$ and, for the right term, the internal structure, which is a derivation in the second input for the external $A$-bimodule structure on the right term;
		\item a morphism  
		\[
			\DB{-}{-}^M: \Sigma M\otimes \Sigma M \longrightarrow \Sigma (M \otimes A \oplus A\otimes M) 
		\]
		with components $\DB{-}{-}^M_l:= pr_{\Sigma M \otimes A}\circ \DB{-}{-}^M$ and $\DB{-}{-}^M_r:= pr_{\Sigma A \otimes M}\circ \DB{-}{-}^M$; 
		which satisfy the following conditions 
		\begin{enumerate}
		\item \label{def::DLR_antisym} \emph{antisymmetry} 
		\[
			\DB{-}{-}^M\tau_{\Sigma M, \Sigma M}= - \Sigma(\tau_{M,A}, \tau_{A,M}) \DB{-}{-}^M;
		\]
		\item \label{def::DRL_compatible1} the first compatibility with the anchor, called \emph{derivation property}: we have the following commutative diagram 
			\[
				\xymatrix{
					\Sigma M \otimes \Sigma A\otimes M \ar[r]^{\Sigma M \otimes \Sigma _A\!\mu }  \ar@/_1pc/[rd]_(0.4){\phi^l}
						& \Sigma M \otimes \Sigma M \ar[d]^{\DB{-}{-}^M} 
						& \Sigma M \otimes \Sigma M\otimes A \ar[l]_{\Sigma M \otimes \Sigma \mu_A } \ar@/^1pc/[ld]^(0.4){\phi^r} \\
					  & \Sigma( M\otimes A \oplus A\otimes M), & 
				}
			\]
			where
			\begin{align*}
				\phi^l:= & (\Sigma A\otimes _A\!\mu)(\rho\otimes M) \\
				& \qquad +( _A\mu^{\Sigma M}\otimes A, _A\mu^{\Sigma A}\otimes M)(A\otimes {\DB{-}{-}^M})(\tau_{\Sigma M \Sigma, A}\otimes M)  \\
				\phi^r:= & (\Sigma M\otimes \mu , \Sigma A \otimes \mu_A)({\DB{-}{-}^M}\otimes A)\\
				&\qquad + (\Sigma \mu_A\otimes A)(\tau_{M,\Sigma}\otimes A\otimes A)(M\otimes \rho)(\tau_{\Sigma M\Sigma,M}\otimes A);
			\end{align*}
			\item \label{def::DRL_compatible2} the second compatibility with the anchor: we have the following relation in $\Hom(\Sigma A\otimes (\Sigma M)^{\otimes 2}, \Sigma A^{\otimes 3})$
				\begin{eqnarray*}
					&&(\rho_\tau\otimes A)(\Sigma A\otimes \DB{-}{-}^M_l) \\
					&&\qquad +\quad \tau_{\Sigma A\otimes \Sigma A, \Sigma A}(\rho\otimes A)(\Sigma M\otimes \rho)\tau_{\Sigma A,\Sigma M \otimes \Sigma M }  \\
					&&\qquad + \quad\tau_{\Sigma A,\Sigma A\otimes \Sigma A}(\rho\otimes A)(\Sigma M\otimes \rho_\tau)\tau_{\Sigma A \otimes \Sigma M , \Sigma M } = 0
				\end{eqnarray*}
				where $\rho_\tau:=-(\Sigma\tau_{A,A})\rho\tau_{\Sigma A,\Sigma M}:\Sigma A\otimes \Sigma M \rightarrow \Sigma A\otimes A $. 
		 	\item \label{def::DRL_DJacobi} and the \emph{double-Jacobi} identity, which is the following relation in \\ $\Hom((\Sigma M)^{\otimes 3}, \Sigma M\otimes A^{\otimes 2})$:
		 	\begin{eqnarray*}
		 		&&(\DB{-}{-}_l^M\otimes A)(\Sigma M\otimes \DB{-}{-}_l^M) \\
		 		&&\qquad +\quad\Sigma\tau_{ A, M\otimes  A}(\DB{-}{-}^M_r\otimes A)(\Sigma M\otimes \DB{-}{-}^M_l)\tau_{\Sigma M \otimes \Sigma M , \Sigma M }  \\
		 		&&\qquad + \quad\Sigma\tau_{ A\otimes  A, M}(\rho\otimes A)(\Sigma M\otimes\DB{-}{-}^M_r)\tau_{\Sigma M,\Sigma M \otimes \Sigma M } = 0.
		 	\end{eqnarray*}
		\end{enumerate}
	\end{enumerate}
	Let $(M,\DB{-}{-}^M)$ and $(N,\DB{-}{-}^N)$ be two $\Sigma$-double Lie-Rinehart algebras over $A$. A \emph{$\Sigma$-double Lie-Rinehart algebra morphism} $\phi$ is an $A$-bimodule morphism which satisfies the property: 
	\[
		\phi\big( \DB{-}{-}^M\big) = \DB{\phi(-)}{\phi(-)}^N.
	\]
	We denote by $\Sigma\mbox{-}\mathtt{DLR}_A$ the category of $\Sigma$-double Lie-Rinehart algebras over $A$. 
\end{definition}
\begin{rem} 
	In the case where $\C$ is a \emph{closed} symmetric monoidal category (for example, $\C=\Ch_k$), then, by adjunction, the anchor of a $\Sigma$-double Lie-Rinehart algebra $A$ is equivalent to the $A$-bimodule morphism $\rho^*: \Sigma M \rightarrow \Der(\Sigma A, \Sigma A\otimes A)$. This morphism is the analogue of the anchor of a Lie-Rinehart algebra.
\end{rem}
\begin{rem}\label{rem::DLieRin_closed_cat} In the case $\C=\Ch_k$, when $A$ is a finitely generated differential graded associative algebra, the condition \eqref{def::DRL_compatible2} can be expressed using the Schouten-Nijenhuis double bracket (introduced in \ref{thm::dcrochet_schouten} below), as:
\[
	\rho^*\big( \DB{-}{-}^M\big) = \DB{\rho^*}{\rho^*}^{SN}.
\]
\end{rem}
\begin{exmps} 
	\begin{enumerate}
		\item For all finitely-generated differential graded associative algebra $A$, the $A$-bimodule $\DDer(A)$ of biderivations (cf. definition \ref{def::biderivation}) with the Schouten-Nijehuis double Poisson bracket and where the identity plays the role of the anchor, is a $0$-double Lie-Rinehart algebra over $A$.
		\item In  \cite[Sect. 5.5]{Alv15}, the noncommutative version of the Atiyah algebra is defined. Let $A$ be an associative $k$-algebra and $M$ a finitely-presented $A$-bimodule. We denote by $\EEnd(M)$, the $A$-bimodule $\Hom_{k}(M,M\otimes A \oplus A\otimes M)$, and, for $\phi$ in $\EEnd(M)$, we denote by $\phi_l:= pr_{M\otimes A}\circ \phi$ and $\phi_r:= pr_{A\otimes M}\circ \phi$, the compositions with the projections. The \emph{Atiyah double algebra on $M$}, denoted by  $\mathbb{A}\mathrm{t}(M)$, is the set of pairs $(d,\phi)$ with  $d\in \DDer(A)$ and $\phi\in \EEnd(M)$ with compatibilities analogous to the commutative case.
		\emph{The Atiyah double bracket} is define as follow: for  $(d^1,\phi^1)$ and $(d^2,\phi^2)$ in $\mathbb{A}\mathrm{t}(M)$ 
		\[
			\DB{(d^1,\phi^1)}{(d^2,\phi^2)}^{\mathbb{A}\mathrm{t}}:= \big( \DB{d^1}{d^2}^{SN}, \DB{(d^1,\phi^1)}{(d^2,\phi^2)}^{\mathbb{E}} \big),
		\]
		where $\DB{-}{-}^{SN}$ is the Schouten-Nijenhuis double Poisson bracket  (cf. section \ref{sect::Schouten}) and $\DB{-}{-}^{\mathbb{E}}$ is described in \cite{Alv15}. By \cite[Prop. 5.5.3]{Alv15}, $\mathbb{A}\mathrm{t}(M)$ equipped with the Atiyah double bracket and the anchor morphism given by 
		\[
			\begin{array}{rccc}
				\rho :& \mathbb{A}\mathrm{t}(M)  & \longrightarrow & \DDer(A) \\
				& (d,\phi) & \longmapsto & d
			\end{array}
		\]
		is a $0$-double Lie-Rinehart algebra over $A$ and $\rho$ is a morphism of double Lie Rinehart algebra.
	\end{enumerate}
\end{exmps}

\begin{rem}
	A double Lie-Rinehart algebra is an algebra over a (coloured) properad. Properads encode algebraic structures which have several inputs and outputs: they generalize the notion of operads which encode algebraic structures with several inputs and one output (see \cite{LV12}). Formally, a properad is a $\mathfrak{S}$-bimodule, i.e. a family of $\mathfrak{S}_n\times\mathfrak{S}_m^{op}$-module with $m$ and $n$ in $\N$, which is a monoid for the connected product $\boxtimes_c$, defined by Vallette in \cite{Val03,Val07}. This product is controlled by connected graphs: for two $\mathfrak{S}$-bimodules $P$ and $Q$, elements in $P\boxtimes_c Q$ can be described as sum of graphs as the following:
	\[
	\begin{tikzpicture}[scale=0.3]
		\draw (0,2) to[out=270,in=90] (0,-2);
		\draw (4,2) to[out=270,in=90] (2,-2);
		\draw (6,2) to[out=270,in=90] (7,-2);
		\draw (8,2) to[out=270,in=90] (4,-2);
		\draw (9,2) to[out=270,in=90] (8,-2);
		\draw (10,2) to[out=270,in=90] (10,-2);
		\draw[draw=white,double=black,double distance=\pgflinewidth,thick] (2,2) to[out=270,in=90] (6,-2);
		\draw[fill=white] (-0.3,1.8) rectangle (2.3,2.4);
		\draw[fill=white] (3.7,1.8) rectangle (6.3,2.4); 
		\draw[fill=white] (7.7,1.8) rectangle (10.3,2.4); 
		\draw (3.1,2) node {\tiny{$\cdots$}};
		\draw (7,2) node {\tiny{$\cdots$}};
		\draw (1,2) node {\tiny{$q_1$}};
		\draw (5,2) node {\tiny{$q_j$}};
		\draw (9,2) node {\tiny{$q_s$}};
		\draw[fill=white] (-0.3,-2.4) rectangle (4.3,-1.8);
		\draw[fill=white] (5.7,-2.4) rectangle (10.3,-1.8);
		\draw (5.1,-2.2) node {\tiny{$\cdots$}};
		\draw (2,-2.2) node {\tiny{$p_1$}};
		\draw (8,-2.2) node {\tiny{$p_r$}};
	\end{tikzpicture}
	\]
	where $p_1,\ldots,p_r$ are elements in $P$ and $q_1,\ldots,q_s$ are in $Q$. There is a notion of free properad (see \cite[Sect. 2.7]{Val07}), so we can talk about properads presented by generators and relations. As for operads, we have the notion of coloured properads (for the definition of coloured prop, the  reader can refer to \cite{JY09}). The coloured properad $\mathcal{DL}ie\mathcal{R}in$ which encodes the double Lie-Rinehart structure (cf. definition \ref{def::dLieRinehart}), is generated by
	\[
	\begin{tikzpicture}[scale=0.2,baseline=1]
	\draw[thin] (0,-0.5) -- (0,2);
	\draw[thin] (2,-0.5) -- (2,2);
	\draw[fill=white] (-0.3,0.5) rectangle (2.3,1);
	\node  at (0,-0.5) [draw,scale=.25,diamond,fill=black] {};
	\node  at (2,-0.5) [draw,scale=.25,diamond,fill=white] {};
	\node  at (0,2) [draw,scale=.25,diamond,fill=black] {};
	\node  at (2,2) [draw,scale=.25,diamond,fill=black] {};
	\draw (-0.3,0.75) node[left] {$\scriptscriptstyle{f}$};
	\end{tikzpicture}~~ \otimes k
	\quad \oplus \quad
	\begin{tikzpicture}[scale=0.2,baseline=1]
	\draw[thin] (0,-0.5) -- (0,2);
	\draw[thin] (2,-0.5) -- (2,2);
	\draw[fill=lightgray!50!white] (-0.3,0.5) rectangle (2.3,1);
	\node  at (0,-0.5) [draw,scale=.25,diamond,fill=white] {};
	\node  at (2,-0.5) [draw,scale=.25,diamond,fill=white] {};
	\node  at (0,2) [draw,scale=.25,diamond,fill=black] {};
	\node  at (2,2) [draw,scale=.25,diamond,fill=white] {};
	\draw (-0.3,0.75) node[left] {$\scriptscriptstyle{\rho}$};
	\end{tikzpicture}~~\otimes k
	\quad \oplus \quad
	\begin{tikzpicture}[baseline=6,scale=0.2]
	\draw (0.5,3.5) -- (2,2) -- (2,0.5); 
	\draw (3.5,3.5) -- (2,2);
	\node  at (0.5,3.5) [draw,scale=.25,diamond,fill=white] {};
	\node  at (3.5,3.5) [draw,scale=.25,diamond,fill=white] {};
	\node  at (2,0.5) [draw,scale=.25,diamond,fill=white] {};
	\draw[fill=white] (2,2) circle (6pt);
	\draw (2,2) node[below left] {$\scriptscriptstyle{\mu}$};
	\end{tikzpicture}\otimes k[\mathfrak{S}_2]
	\quad \oplus \quad
	\begin{tikzpicture}[baseline=6,scale=0.2]
	\draw (0.5,3.5) -- (2,2) -- (2,0.5); 
	\draw (3.5,3.5) -- (2,2);
	\node  at (0.5,3.5) [draw,scale=.25,diamond,fill=white] {};
	\node  at (3.5,3.5) [draw,scale=.25,diamond,fill=black] {};
	\node  at (2,0.5) [draw,scale=.25,diamond,fill=black] {};
	\draw[fill=white] (2,2) circle (6pt);
	\draw (2,2) node[below left] {$\scriptscriptstyle{l}$};
	\end{tikzpicture}\otimes k
	\quad \oplus \quad
	\begin{tikzpicture}[baseline=6,scale=0.2]
	\draw (0.5,3.5) -- (2,2) -- (2,0.5); 
	\draw (3.5,3.5) -- (2,2);
	\node  at (0.5,3.5) [draw,scale=.25,diamond,fill=black] {};
	\node  at (3.5,3.5) [draw,scale=.25,diamond,fill=white] {};
	\node  at (2,0.5) [draw,scale=.25,diamond,fill=black] {};
	\draw[fill=white] (2,2) circle (6pt);
	\draw (2,2) node[below left] {$\scriptscriptstyle{r}$};
	\end{tikzpicture}\otimes k
	\]
	with the associative relation for the generator $\mu$, and the following relations:
	\begin{equation}\tag{\ref{def::anchor} - derivation}
	\begin{tikzpicture}[scale=0.2,baseline=-1]
	\draw (0,-0.5) -- (0,1);
	\draw (2,-0.5) -- (2,1.5);
	\draw (2,1.5) -- (1,2.5);
	\draw (2,1.5) -- (3,2.5);
	\draw (0,1) to[out=90,in=270] (-1,2.5);
	\node  at (-1,2.5) [draw,scale=.25,diamond,fill=black] {};
	\node  at (1,2.5) [draw,scale=.25,diamond,fill=white] {};
	\node  at (3,2.5) [draw,scale=.25,diamond,fill=white] {};
	\node  at (0,-0.5) [draw,scale=.25,diamond,fill=white] {};
	\node  at (2,-0.5) [draw,scale=.25,diamond,fill=white] {};
	\draw (-1,2.5) node[above] {$\scriptscriptstyle{1}$};
	\draw (1,2.5) node[above] {$\scriptscriptstyle{2}$};
	\draw (3,2.5) node[above] {$\scriptscriptstyle{3}$};
	\draw (0,-0.5) node[below] {$\scriptscriptstyle{1}$};
	\draw (2,-0.5) node[below] {$\scriptscriptstyle{2}$};
	\node  at (2,1.5) [draw,scale=.25,circle,fill=white] {};
	\draw[fill=lightgray!50!white] (-0.3,0.5) rectangle (2.3,1);
	\end{tikzpicture}
	\quad = \quad
	\begin{tikzpicture}[scale=0.2,baseline=-1]
	\draw (1,-1) -- (1,0.5) -- (2,1.5) -- (2,3);
	\draw (1,0.5) -- (0,1.5) -- (0,3);
	\draw (3,-1) to[out=90,in=270] (4,1.5) -- (4,3);
	\node  at (0,3) [draw,scale=.25,diamond,fill=white] {};
	\node  at (2,3) [draw,scale=.25,diamond,fill=black] {};
	\node  at (4,3) [draw,scale=.25,diamond,fill=white] {};
	\node  at (1,-1) [draw,scale=.25,diamond,fill=white] {};
	\node  at (3,-1) [draw,scale=.25,diamond,fill=white] {};
	\draw (0,3) node[above] {$\scriptscriptstyle{2}$};
	\draw (2,3) node[above] {$\scriptscriptstyle{1}$};
	\draw (4,3) node[above] {$\scriptscriptstyle{3}$};
	\draw (1,-1) node[below] {$\scriptscriptstyle{1}$};
	\draw (3,-1) node[below] {$\scriptscriptstyle{2}$};
	\node  at (1,0.5) [draw,scale=.25,circle,fill=white] {};
	\draw[fill=lightgray!50!white] (1.7,1.5) rectangle (4.3,2);
	\end{tikzpicture}
	\quad + \quad
	\begin{tikzpicture}[scale=0.2,baseline=-1]
	\draw (1,0.5) -- (2,1.5) -- (2,3);
	\draw (1,-1) -- (1,0.5) -- (0,1.5) -- (0,3);
	\draw (-1,-1) to[out=90,in=270] (-2,1.5) -- (-2,3);
	\node  at (-2,3) [draw,scale=.25,diamond,fill=black] {};
	\node  at (0,3) [draw,scale=.25,diamond,fill=white] {};
	\node  at (2,3) [draw,scale=.25,diamond,fill=white] {};
	\node  at (1,-1) [draw,scale=.25,diamond,fill=white] {};
	\node  at (-1,-1) [draw,scale=.25,diamond,fill=white] {};
	\draw (-2,3) node[above] {$\scriptscriptstyle{1}$};
	\draw (0,3) node[above] {$\scriptscriptstyle{2}$};
	\draw (2,3) node[above] {$\scriptscriptstyle{3}$};
	\draw (-1,-1) node[below] {$\scriptscriptstyle{1}$};
	\draw (1,-1) node[below] {$\scriptscriptstyle{2}$};
	\node  at (1,0.5) [draw,scale=.25,circle,fill=white] {};
	\draw[fill=lightgray!50!white] (-2.3,1.5) rectangle (0.3,2);
	\end{tikzpicture}~~;
	\end{equation}
	
	\begin{equation}\tag{\ref{def::anchor} - bimodule}
	\begin{tikzpicture}[scale=0.2,baseline=1]
	\draw (0,-0.5) -- (0,2) -- (-1,3) -- (0,2) -- (1,3);
	\draw (2,-0.5) -- (2,1) to[out=90,in=270] (3,2.9);
	\node  at (0,2) [draw,scale=.25,circle,fill=white] {};
	\draw[fill=lightgray!50!white] (-0.3,0.5) rectangle (2.3,1);
	\node  at (0,-0.5) [draw,scale=.25,diamond,fill=white] {};
	\node  at (2,-0.5) [draw,scale=.25,diamond,fill=white] {};
	\node  at (-1,3) [draw,scale=.25,diamond,fill=white] {};
	\node  at (1,3) [draw,scale=.25,diamond,fill=black] {};
	\node  at (3,3) [draw,scale=.25,diamond,fill=white] {};
	\draw (-0.3,0.75) node[left] {$\scriptscriptstyle{\rho}$};
	\draw (0,2) node[left] {$\scriptscriptstyle{l~}$};
	\draw (-1,3) node[above] {$\scriptscriptstyle{1}$};
	\draw (1,3) node[above] {$\scriptscriptstyle{2}$};
	\draw (3,3) node[above] {$\scriptscriptstyle{3}$};
	\draw (0,-0.5) node[below] {$\scriptscriptstyle{1}$};
	\draw (2,-0.5) node[below] {$\scriptscriptstyle{2}$};
	\end{tikzpicture}
	\quad = \quad
	\begin{tikzpicture}[scale=0.2,baseline=1]
	\draw (1,-0.5) to[out=90,in=270] (2,1.5) -- (2,3); 
	\draw (3,0.5) to[out=130,in=270] (0,1.5) -- (0,3);
	\draw (3,-0.5) -- (3,0.5) -- (4,1.5) -- (4,3);
	\node  at (3,0.5) [draw,scale=.25,circle,fill=white] {};
	\draw[fill=lightgray!50!white] (1.7,1.5) rectangle (4.3,2);
	\node  at (1,-0.5) [draw,scale=.25,diamond,fill=white] {};
	\node  at (3,-0.5) [draw,scale=.25,diamond,fill=white] {};
	\node  at (0,3) [draw,scale=.25,diamond,fill=white] {};
	\node  at (2,3) [draw,scale=.25,diamond,fill=black] {};
	\node  at (4,3) [draw,scale=.25,diamond,fill=white] {};
	\draw (0,3) node[above] {$\scriptscriptstyle{1}$};
	\draw (2,3) node[above] {$\scriptscriptstyle{2}$};
	\draw (4,3) node[above] {$\scriptscriptstyle{3}$};
	\draw (1,-0.5) node[below] {$\scriptscriptstyle{1}$};
	\draw (3,-0.5) node[below] {$\scriptscriptstyle{2}$};
	\draw (4.3,1.75) node[right] {$\scriptscriptstyle{\rho}$};
	\draw (3,0.5) node[right] {$\scriptscriptstyle{~\mu}$};
	\end{tikzpicture}
	\qquad ; \qquad
	\begin{tikzpicture}[scale=0.2,baseline=1]
	\draw (0,-0.5) -- (0,2) -- (-1,3) -- (0,2) -- (1,3);
	\draw (2,-0.5) -- (2,1) to[out=90,in=270] (3,2.9);
	\node  at (0,2) [draw,scale=.25,circle,fill=white] {};
	\draw[fill=lightgray!50!white] (-0.3,0.5) rectangle (2.3,1);
	\node  at (0,-0.5) [draw,scale=.25,diamond,fill=white] {};
	\node  at (2,-0.5) [draw,scale=.25,diamond,fill=white] {};
	\node  at (-1,3) [draw,scale=.25,diamond,fill=black] {};
	\node  at (1,3) [draw,scale=.25,diamond,fill=white] {};
	\node  at (3,3) [draw,scale=.25,diamond,fill=white] {};
	\draw (-0.3,0.75) node[left] {$\scriptscriptstyle{\rho}$};
	\draw (0,2) node[left] {$\scriptscriptstyle{r~}$};
	\draw (-1,3) node[above] {$\scriptscriptstyle{1}$};
	\draw (1,3) node[above] {$\scriptscriptstyle{2}$};
	\draw (3,3) node[above] {$\scriptscriptstyle{3}$};
	\draw (0,-0.5) node[below] {$\scriptscriptstyle{1}$};
	\draw (2,-0.5) node[below] {$\scriptscriptstyle{2}$};
	\end{tikzpicture}
	\quad = \quad
	\begin{tikzpicture}[scale=0.2,baseline=1]
	\draw (5,-0.5) to[out=90,in=270] (4,1.5) -- (4,3);
	\draw (6,1.5) -- (6,3);
	\draw (3,0.5) to[out=90,in=270] (6,1.5) -- (6,3);
	\draw (3,-0.5) -- (3,0.5) -- (2,1.5) -- (2,3);
	\node  at (3,0.5) [draw,scale=.25,circle,fill=white] {};
	\draw[fill=lightgray!50!white] (1.7,1.5) rectangle (4.3,2);
	\node  at (3,-0.5) [draw,scale=.25,diamond,fill=white] {};
	\node  at (5,-0.5) [draw,scale=.25,diamond,fill=white] {};
	\node  at (2,3) [draw,scale=.25,diamond,fill=black] {};
	\node  at (4,3) [draw,scale=.25,diamond,fill=white] {};
	\node  at (6,3) [draw,scale=.25,diamond,fill=white] {};
	\draw (6,3) node[above] {$\scriptscriptstyle{2}$};
	\draw (2,3) node[above] {$\scriptscriptstyle{1}$};
	\draw (4,3) node[above] {$\scriptscriptstyle{3}$};
	\draw (3,-0.5) node[below] {$\scriptscriptstyle{1}$};
	\draw (5,-0.5) node[below] {$\scriptscriptstyle{2}$};
	\draw (1.7,1.75) node[left] {$\scriptscriptstyle{\rho}$};
	\draw (3,0.5) node[left] {$\scriptscriptstyle{\mu~}$};
	\end{tikzpicture}~~;
	\end{equation}
	
	\begin{equation}
	\tag{right derivation}
	\begin{tikzpicture}[scale=0.2,baseline=1]
	\draw[thin] (0,-0.5) -- (0,1) to[out=90,in=270] (-1,2.9);
	\draw[thin] (2,-0.5) -- (2,2) -- (3,3);
	\draw (2,2) -- (1,3);
	\node  at (2,2) [draw,scale=.25,circle,fill=white] {};
	\draw[fill=white] (-0.3,0.5) rectangle (2.3,1);
	\node  at (0,-0.5) [draw,scale=.25,diamond,fill=black] {};
	\node  at (2,-0.5) [draw,scale=.25,diamond,fill=white] {};
	\node  at (-1,3) [draw,scale=.25,diamond,fill=black] {};
	\node  at (1,3) [draw,scale=.25,diamond,fill=white] {};
	\node  at (3,3) [draw,scale=.25,diamond,fill=black] {};
	\draw (-0.3,0.75) node[left] {$\scriptscriptstyle{f}$};
	\draw (2,2) node[right] {$\scriptscriptstyle{~l}$};
	\draw (-1,3) node[above] {$\scriptscriptstyle{1}$};
	\draw (1,3) node[above] {$\scriptscriptstyle{2}$};
	\draw (3,3) node[above] {$\scriptscriptstyle{3}$};
	\draw (0,-0.5) node[below] {$\scriptscriptstyle{1}$};
	\draw (2,-0.5) node[below] {$\scriptscriptstyle{2}$};
	\end{tikzpicture}
	\quad = \quad
	\begin{tikzpicture}[scale=0.2,baseline=1]
	\draw (1,-0.5) -- (1,0.5) -- (2,1.5);
	\draw (1,0.5) -- (0,1.5) -- (0,3);
	\draw (2,1.5) -- (2,3);
	\draw (4,2) -- (4,3);
	\draw (3,-0.5) to[out=90,in=270] (4,1.5) ;
	\node  at (1,0.5) [draw,scale=.25,circle,fill=white] {};
	\draw[fill=white] (1.7,1.5) rectangle (4.3,2);
	\node  at (1,-0.5) [draw,scale=.25,diamond,fill=black] {};
	\node  at (3,-0.5) [draw,scale=.25,diamond,fill=white] {};
	\node  at (0,3) [draw,scale=.25,diamond,fill=white] {};
	\node  at (2,3) [draw,scale=.25,diamond,fill=black] {};
	\node  at (4,3) [draw,scale=.25,diamond,fill=black] {};
	\draw (0,3) node[above] {$\scriptscriptstyle{2}$};
	\draw (2,3) node[above] {$\scriptscriptstyle{1}$};
	\draw (4,3) node[above] {$\scriptscriptstyle{3}$};
	\draw (1,-0.5) node[below] {$\scriptscriptstyle{1}$};
	\draw (3,-0.5) node[below] {$\scriptscriptstyle{2}$};
	\draw (4.3,1.75) node[right] {$\scriptscriptstyle{f}$};
	\draw (1,0.5) node[left] {$\scriptscriptstyle{l~}$};
	\end{tikzpicture}
	\qquad ; \qquad
	\begin{tikzpicture}[scale=0.2,baseline=1]
	\draw[thin] (0,-0.5) -- (0,1) to[out=90,in=270] (-1,2.9);
	\draw[thin] (2,-0.5) -- (2,2) -- (3,3);
	\draw (2,2) -- (1,3);
	\node  at (2,2) [draw,scale=.25,circle,fill=white] {};
	\draw[fill=white] (-0.3,0.5) rectangle (2.3,1);
	\node  at (0,-0.5) [draw,scale=.25,diamond,fill=black] {};
	\node  at (2,-0.5) [draw,scale=.25,diamond,fill=white] {};
	\node  at (-1,3) [draw,scale=.25,diamond,fill=black] {};
	\node  at (1,3) [draw,scale=.25,diamond,fill=black] {};
	\node  at (3,3) [draw,scale=.25,diamond,fill=white] {};
	\draw (-0.3,0.75) node[left] {$\scriptscriptstyle{f}$};
	\draw (2,2) node[right] {$\scriptscriptstyle{~r}$};
	\draw (-1,3) node[above] {$\scriptscriptstyle{1}$};
	\draw (1,3) node[above] {$\scriptscriptstyle{2}$};
	\draw (3,3) node[above] {$\scriptscriptstyle{3}$};
	\draw (0,-0.5) node[below] {$\scriptscriptstyle{1}$};
	\draw (2,-0.5) node[below] {$\scriptscriptstyle{2}$};
	\end{tikzpicture}
	\quad = \quad
	\begin{tikzpicture}[scale=0.2,baseline=1]
	\draw (5,-0.5) -- (5,0.5) -- (6,1.5);
	\draw (5,0.5) -- (4,1.5) -- (4,3);
	\draw (2,1.5) -- (2,3);
	\draw (6,1.5) -- (6,3);
	\draw (3,-0.5) to[out=90,in=270] (2,1.5) ;
	\node  at (5,0.5) [draw,scale=.25,circle,fill=white] {};
	\draw[fill=white] (1.7,1.5) rectangle (4.3,2);
	\node  at (3,-0.5) [draw,scale=.25,diamond,fill=black] {};
	\node  at (5,-0.5) [draw,scale=.25,diamond,fill=white] {};
	\node  at (2,3) [draw,scale=.25,diamond,fill=black] {};
	\node  at (4,3) [draw,scale=.25,diamond,fill=black] {};
	\node  at (6,3) [draw,scale=.25,diamond,fill=white] {};
	\draw (6,3) node[above] {$\scriptscriptstyle{3}$};
	\draw (2,3) node[above] {$\scriptscriptstyle{1}$};
	\draw (4,3) node[above] {$\scriptscriptstyle{2}$};
	\draw (3,-0.5) node[below] {$\scriptscriptstyle{1}$};
	\draw (5,-0.5) node[below] {$\scriptscriptstyle{2}$};
	\draw (1.7,1.75) node[left] {$\scriptscriptstyle{f}$};
	\draw (5,0.5) node[right] {$\scriptscriptstyle{~\mu}$};
	\end{tikzpicture}
	\quad + \quad
	\begin{tikzpicture}[scale=0.2,baseline=1]
	\draw (1,-0.5) -- (1,0.5) -- (2,1.5);
	\draw (1,0.5) -- (0,1.5) -- (0,3);
	\draw (2,1.5) -- (2,3);
	\draw (4,2) -- (4,3);
	\draw (3,-0.5) to[out=90,in=270] (4,1.5) ;
	\node  at (1,0.5) [draw,scale=.25,circle,fill=white] {};
	\draw[fill=lightgray!50!white] (1.7,1.5) rectangle (4.3,2);
	\node  at (1,-0.5) [draw,scale=.25,diamond,fill=black] {};
	\node  at (3,-0.5) [draw,scale=.25,diamond,fill=white] {};
	\node  at (0,3) [draw,scale=.25,diamond,fill=black] {};
	\node  at (2,3) [draw,scale=.25,diamond,fill=black] {};
	\node  at (4,3) [draw,scale=.25,diamond,fill=white] {};
	\draw (0,3) node[above] {$\scriptscriptstyle{2}$};
	\draw (2,3) node[above] {$\scriptscriptstyle{1}$};
	\draw (4,3) node[above] {$\scriptscriptstyle{3}$};
	\draw (1,-0.5) node[below] {$\scriptscriptstyle{1}$};
	\draw (3,-0.5) node[below] {$\scriptscriptstyle{2}$};
	\draw (4.3,1.75) node[right] {$\scriptscriptstyle{\rho}$};
	\draw (1,0.5) node[left] {$\scriptscriptstyle{r~}$};
	\end{tikzpicture}~~;
	\end{equation}
	\begin{equation}\tag{bimodule}
	\begin{tikzpicture}[scale=0.2,baseline=1]
	\draw (0,-0.5) -- (0,2) -- (-1,3) -- (0,2) -- (1,3);
	\draw (2,-0.5) -- (2,1) to[out=90,in=270] (3,2.9);
	\node  at (0,2) [draw,scale=.25,circle,fill=white] {};
	\draw[fill=white] (-0.3,0.5) rectangle (2.3,1);
	\node  at (0,-0.5) [draw,scale=.25,diamond,fill=black] {};
	\node  at (2,-0.5) [draw,scale=.25,diamond,fill=white] {};
	\node  at (-1,3) [draw,scale=.25,diamond,fill=white] {};
	\node  at (1,3) [draw,scale=.25,diamond,fill=black] {};
	\node  at (3,3) [draw,scale=.25,diamond,fill=black] {};
	\draw (-0.3,0.75) node[left] {$\scriptscriptstyle{f}$};
	\draw (0,2) node[left] {$\scriptscriptstyle{l~}$};
	\draw (-1,3) node[above] {$\scriptscriptstyle{1}$};
	\draw (1,3) node[above] {$\scriptscriptstyle{2}$};
	\draw (3,3) node[above] {$\scriptscriptstyle{3}$};
	\draw (0,-0.5) node[below] {$\scriptscriptstyle{1}$};
	\draw (2,-0.5) node[below] {$\scriptscriptstyle{2}$};
	\end{tikzpicture}
	\quad = \quad
	\begin{tikzpicture}[scale=0.2,baseline=1]
	\draw (1,-0.5) to[out=90,in=270] (2,1.5) -- (2,3); 
	\draw (3,0.5) to[out=130,in=270] (0,1.5) -- (0,3);
	\draw (3,-0.5) -- (3,0.5) -- (4,1.5) -- (4,3);
	\node  at (3,0.5) [draw,scale=.25,circle,fill=white] {};
	\draw[fill=white] (1.7,1.5) rectangle (4.3,2);
	\node  at (1,-0.5) [draw,scale=.25,diamond,fill=black] {};
	\node  at (3,-0.5) [draw,scale=.25,diamond,fill=white] {};
	\node  at (0,3) [draw,scale=.25,diamond,fill=white] {};
	\node  at (2,3) [draw,scale=.25,diamond,fill=black] {};
	\node  at (4,3) [draw,scale=.25,diamond,fill=black] {};
	\draw (0,3) node[above] {$\scriptscriptstyle{1}$};
	\draw (2,3) node[above] {$\scriptscriptstyle{2}$};
	\draw (4,3) node[above] {$\scriptscriptstyle{3}$};
	\draw (1,-0.5) node[below] {$\scriptscriptstyle{1}$};
	\draw (3,-0.5) node[below] {$\scriptscriptstyle{2}$};
	\draw (4.3,1.75) node[right] {$\scriptscriptstyle{f}$};
	\draw (3,0.5) node[right] {$\scriptscriptstyle{~\mu}$};
	\end{tikzpicture}
	\qquad ; \qquad
	\begin{tikzpicture}[scale=0.2,baseline=1]
	\draw (0,-0.5) -- (0,2) -- (-1,3) -- (0,2) -- (1,3);
	\draw (2,-0.5) -- (2,1) to[out=90,in=270] (3,2.9);
	\node  at (0,2) [draw,scale=.25,circle,fill=white] {};
	\draw[fill=white] (-0.3,0.5) rectangle (2.3,1);
	\node  at (0,-0.5) [draw,scale=.25,diamond,fill=black] {};
	\node  at (2,-0.5) [draw,scale=.25,diamond,fill=white] {};
	\node  at (-1,3) [draw,scale=.25,diamond,fill=black] {};
	\node  at (1,3) [draw,scale=.25,diamond,fill=white] {};
	\node  at (3,3) [draw,scale=.25,diamond,fill=black] {};
	\draw (-0.3,0.75) node[left] {$\scriptscriptstyle{f}$};
	\draw (0,2) node[left] {$\scriptscriptstyle{r~}$};
	\draw (-1,3) node[above] {$\scriptscriptstyle{1}$};
	\draw (1,3) node[above] {$\scriptscriptstyle{2}$};
	\draw (3,3) node[above] {$\scriptscriptstyle{3}$};
	\draw (0,-0.5) node[below] {$\scriptscriptstyle{1}$};
	\draw (2,-0.5) node[below] {$\scriptscriptstyle{2}$};
	\end{tikzpicture}
	\quad = \quad
	\begin{tikzpicture}[scale=0.2,baseline=1]
	\draw (5,-0.5) to[out=90,in=270] (4,1.5) -- (4,3);
	\draw (6,1.5) -- (6,3);
	\draw (3,0.5) to[out=90,in=270] (6,1.5) -- (6,3);
	\draw (3,-0.5) -- (3,0.5) -- (2,1.5) -- (2,3);
	\node  at (3,0.5) [draw,scale=.25,circle,fill=white] {};
	\draw[fill=white] (1.7,1.5) rectangle (4.3,2);
	\node  at (3,-0.5) [draw,scale=.25,diamond,fill=black] {};
	\node  at (5,-0.5) [draw,scale=.25,diamond,fill=white] {};
	\node  at (2,3) [draw,scale=.25,diamond,fill=black] {};
	\node  at (4,3) [draw,scale=.25,diamond,fill=black] {};
	\node  at (6,3) [draw,scale=.25,diamond,fill=white] {};
	\draw (6,3) node[above] {$\scriptscriptstyle{2}$};
	\draw (2,3) node[above] {$\scriptscriptstyle{1}$};
	\draw (4,3) node[above] {$\scriptscriptstyle{3}$};
	\draw (3,-0.5) node[below] {$\scriptscriptstyle{1}$};
	\draw (5,-0.5) node[below] {$\scriptscriptstyle{2}$};
	\draw (1.7,1.75) node[left] {$\scriptscriptstyle{f}$};
	\draw (3,0.5) node[left] {$\scriptscriptstyle{r~}$};
	\end{tikzpicture}~~;
	\end{equation}
	\begin{equation}\tag{\ref{def::DRL_compatible2}}
	\begin{tikzpicture}[scale=0.2,baseline=-1]
	\draw (4,-1) -- (4,0.5) to[out=90,in=270] (2,1.5) -- (2,3);
	\draw (2,-1) -- (2,0) to[out=90,in=270] (4,2) -- (4,3);
	\draw (0,-1) -- (0,3);
	\node  at (0,3) [draw,scale=.25,diamond,fill=black] {};
	\node  at (2,3) [draw,scale=.25,diamond,fill=black] {};
	\node  at (4,3) [draw,scale=.25,diamond,fill=white] {};
	\node  at (0,-1) [draw,scale=.25,diamond,fill=white] {};
	\node  at (2,-1) [draw,scale=.25,diamond,fill=white] {};
	\node  at (4,-1) [draw,scale=.25,diamond,fill=white] {};
	\draw (0,3) node[above] {$\scriptscriptstyle{2}$};
	\draw (2,3) node[above] {$\scriptscriptstyle{3}$};
	\draw (4,3) node[above] {$\scriptscriptstyle{1}$};
	\draw (0,-1) node[below] {$\scriptscriptstyle{3}$};
	\draw (2,-1) node[below] {$\scriptscriptstyle{2}$};
	\draw (4,-1) node[below] {$\scriptscriptstyle{1}$};
	\draw[fill=lightgray!50!white] (1.7,1.5) rectangle (4.3,2);
	\draw (4.3,1.75) node[right] {$\scriptscriptstyle{\rho}$};
	\draw[fill=lightgray!50!white] (-0.3,0) rectangle (2.3,0.5);
	\draw (-0.3,0.25) node[left] {$\scriptscriptstyle{\rho}$};
	\end{tikzpicture}
	\quad - \quad 
	\begin{tikzpicture}[scale=0.2,baseline=-1]
	\draw (0,-1) -- (0,0.5) to[out=90,in=270] (2,1.5) -- (2,3);
	\draw (2,-1) -- (2,0) to[out=90,in=270] (0,2) -- (0,3);
	\draw (4,-1) -- (4,3);
	\node  at (0,3) [draw,scale=.25,diamond,fill=white] {};
	\node  at (2,3) [draw,scale=.25,diamond,fill=black] {};
	\node  at (4,3) [draw,scale=.25,diamond,fill=black] {};
	\node  at (0,-1) [draw,scale=.25,diamond,fill=white] {};
	\node  at (2,-1) [draw,scale=.25,diamond,fill=white] {};
	\node  at (4,-1) [draw,scale=.25,diamond,fill=white] {};
	\draw (0,3) node[above] {$\scriptscriptstyle{1}$};
	\draw (2,3) node[above] {$\scriptscriptstyle{2}$};
	\draw (4,3) node[above] {$\scriptscriptstyle{3}$};
	\draw (0,-1) node[below] {$\scriptscriptstyle{2}$};
	\draw (2,-1) node[below] {$\scriptscriptstyle{1}$};
	\draw (4,-1) node[below] {$\scriptscriptstyle{3}$};
	\draw[fill=white] (1.7,1.5) rectangle (4.3,2);
	\draw (4.3,1.75) node[right] {$\scriptscriptstyle{f}$};
	\draw[fill=lightgray!50!white] (-0.3,0) rectangle (2.3,0.5);
	\draw (-0.3,0.25) node[left] {$\scriptscriptstyle{\rho}$};
	\end{tikzpicture}
	\quad - \quad
	\begin{tikzpicture}[scale=0.2,baseline=-1]
	\draw (4,-1) -- (4,0.5) to[out=90,in=270] (2,1.5) -- (2,3);
	\draw (2,-1) -- (2,0.3) to[out=90,in=270] (4,1.8) -- (4,3);
	\draw (0,-1) -- (0,3);
	\node  at (0,3) [draw,scale=.25,diamond,fill=black] {};
	\node  at (2,3) [draw,scale=.25,diamond,fill=black] {};
	\node  at (4,3) [draw,scale=.25,diamond,fill=white] {};
	\node  at (0,-1) [draw,scale=.25,diamond,fill=white] {};
	\node  at (2,-1) [draw,scale=.25,diamond,fill=white] {};
	\node  at (4,-1) [draw,scale=.25,diamond,fill=white] {};
	\draw (0,3) node[above] {$\scriptscriptstyle{3}$};
	\draw (2,3) node[above] {$\scriptscriptstyle{2}$};
	\draw (4,3) node[above] {$\scriptscriptstyle{1}$};
	\draw (0,-1) node[below] {$\scriptscriptstyle{3}$};
	\draw (2,-1) node[below] {$\scriptscriptstyle{1}$};
	\draw (4,-1) node[below] {$\scriptscriptstyle{2}$};
	\draw[fill=lightgray!50!white] (1.7,1.5) rectangle (4.3,2);
	\draw (4.3,1.75) node[right] {$\scriptscriptstyle{\rho}$};
	\draw[fill=lightgray!50!white] (-0.3,0) rectangle (2.3,0.5);
	\draw (-0.3,0.25) node[left] {$\scriptscriptstyle{\rho}$};
	\end{tikzpicture}
	\quad = \quad 0~~;
	\end{equation}
	\begin{equation}\tag{\ref{def::DRL_DJacobi}}
	\begin{tikzpicture}[scale=0.2,baseline=-1]
	\draw[thin] (0,-1) -- (0,3);
	\draw[thin] (2,-1) -- (2,3);
	\draw[thin] (4,-1) -- (4,3);
	\node  at (0,3) [draw,scale=.25,diamond,fill=black] {};
	\node  at (2,3) [draw,scale=.25,diamond,fill=black] {};
	\node  at (4,3) [draw,scale=.25,diamond,fill=black] {};
	\node  at (0,-1) [draw,scale=.25,diamond,fill=black] {};
	\node  at (2,-1) [draw,scale=.25,diamond,fill=white] {};
	\node  at (4,-1) [draw,scale=.25,diamond,fill=white] {};
	\draw (0,3) node[above] {$\scriptscriptstyle{1}$};
	\draw (2,3) node[above] {$\scriptscriptstyle{2}$};
	\draw (4,3) node[above] {$\scriptscriptstyle{3}$};
	\draw (0,-1) node[below] {$\scriptscriptstyle{1}$};
	\draw (2,-1) node[below] {$\scriptscriptstyle{2}$};
	\draw (4,-1) node[below] {$\scriptscriptstyle{3}$};
	\draw[fill=white] (1.7,1.5) rectangle (4.3,2);
	\draw (4.3,1.75) node[right] {$\scriptscriptstyle{f}$};
	\draw[fill=white] (-0.3,0) rectangle (2.3,0.5);
	\draw (-0.3,0.25) node[left] {$\scriptscriptstyle{f}$};
	\end{tikzpicture}
	\quad - \quad 
	\begin{tikzpicture}[scale=0.2,baseline=-1]
	\draw (0,-1) -- (0,0.5) to[out=90,in=270] (2,1.5) -- (2,3);
	\draw (2,-1) -- (2,0) to[out=90,in=270] (0,2) -- (0,3);
	\draw (4,-1) -- (4,3);
	\node  at (0,3) [draw,scale=.25,diamond,fill=black] {};
	\node  at (2,3) [draw,scale=.25,diamond,fill=black] {};
	\node  at (4,3) [draw,scale=.25,diamond,fill=black] {};
	\node  at (0,-1) [draw,scale=.25,diamond,fill=black] {};
	\node  at (2,-1) [draw,scale=.25,diamond,fill=white] {};
	\node  at (4,-1) [draw,scale=.25,diamond,fill=white] {};
	\draw (0,3) node[above] {$\scriptscriptstyle{3}$};
	\draw (2,3) node[above] {$\scriptscriptstyle{1}$};
	\draw (4,3) node[above] {$\scriptscriptstyle{2}$};
	\draw (0,-1) node[below] {$\scriptscriptstyle{1}$};
	\draw (2,-1) node[below] {$\scriptscriptstyle{3}$};
	\draw (4,-1) node[below] {$\scriptscriptstyle{2}$};
	\draw[fill=white] (1.7,1.5) rectangle (4.3,2);
	\draw (4.3,1.75) node[right] {$\scriptscriptstyle{f}$};
	\draw[fill=white] (-0.3,0) rectangle (2.3,0.5);
	\draw (-0.3,0.25) node[left] {$\scriptscriptstyle{f}$};
	\end{tikzpicture}
	\quad - \quad 
	\begin{tikzpicture}[scale=0.2,baseline=-1]
	\draw (4,-1) -- (4,0.5) to[out=90,in=270] (2,1.5) -- (2,3);
	\draw (2,-1) -- (2,0.3) to[out=90,in=270] (4,1.8) -- (4,3);
	\draw (0,-1) -- (0,3);
	\node  at (0,3) [draw,scale=.25,diamond,fill=black] {};
	\node  at (2,3) [draw,scale=.25,diamond,fill=black] {};
	\node  at (4,3) [draw,scale=.25,diamond,fill=black] {};
	\node  at (0,-1) [draw,scale=.25,diamond,fill=white] {};
	\node  at (2,-1) [draw,scale=.25,diamond,fill=white] {};
	\node  at (4,-1) [draw,scale=.25,diamond,fill=black] {};
	\draw (0,3) node[above] {$\scriptscriptstyle{2}$};
	\draw (2,3) node[above] {$\scriptscriptstyle{1}$};
	\draw (4,3) node[above] {$\scriptscriptstyle{3}$};
	\draw (0,-1) node[below] {$\scriptscriptstyle{2}$};
	\draw (2,-1) node[below] {$\scriptscriptstyle{3}$};
	\draw (4,-1) node[below] {$\scriptscriptstyle{1}$};
	\draw[fill=white] (1.7,1.5) rectangle (4.3,2);
	\draw (4.3,1.75) node[right] {$\scriptscriptstyle{f}$};
	\draw[fill=lightgray!50!white] (-0.3,0) rectangle (2.3,0.5);
	\draw (-0.3,0.25) node[left] {$\scriptscriptstyle{\rho}$};
	\end{tikzpicture}
	\quad = \quad 0.
	\end{equation}
\end{rem}

In the next proposition, we establish the noncommutative version of the correspondence between Lie-Rinehart algebras and a class of Poisson algebras stated in \ref{prop::LieRin_Pois}. Namely, we explain the correspondence between $\Sigma$-double Lie-Rinehart algebras and linear $\Sigma$-double Poisson algebras.

\begin{prop}[{cf.\cite[(3.4-1)-(3.4-8)]{VdB08} -- \cite[Sect. 3.2]{VdB08-2}}] \label{prop::eq_LieRinehart_DPlin}
	Let $A$ be a monoid in $\C $, $M$ an $A$-bimodule and $\Sigma$, an invertible object in $\C$. The following assertions are equivalent: 
	\begin{enumerate}
		\item $M$ is a $\Sigma$-double Lie-Rinehart algebra over $A$;
		\item $T_A(M)$ is a linear $\Sigma$-double Poisson algebra.
	\end{enumerate}
	We have the equivalence of categories
	\[
	\Sigma\mbox{-}\mathtt{DPFree}_A^{lin} \cong \Sigma\mbox{-}\mathtt{DLR}_A.
	\]
\end{prop}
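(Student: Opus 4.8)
The plan is to identify both structures with the \emph{same} datum on the generators of $T_A(M)$, and then to match the three axioms of a linear $\Sigma$-double Poisson bracket (antisymmetry, left derivation, double Jacobi) with the four defining conditions of a $\Sigma$-double Lie-Rinehart algebra. By the preceding Lemma, an antisymmetric $\Sigma$-double bracket on $T_A(M)$ satisfying the left derivation property is determined by its restrictions to $\Sigma A\otimes\Sigma A$, $\Sigma M\otimes\Sigma A$ and $\Sigma M\otimes\Sigma M$; for a \emph{linear} one the first vanishes, while the remaining two are precisely an anchor $\rho:\Sigma M\otimes\Sigma A\to\Sigma A\otimes A$ and a bracket $\DB{-}{-}^M:\Sigma M\otimes\Sigma M\to\Sigma(M\otimes A\oplus A\otimes M)$. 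Conversely, by freeness of $T_A(M)$ any such pair extends uniquely to an antisymmetric left-derivation bracket. Thus the underlying data of the two sides coincide, and the entire content of the statement is the equivalence of the respective axioms.

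First I would treat antisymmetry and the derivation property. Restricting the antisymmetry of the global bracket to $\Sigma M\otimes\Sigma M$ gives exactly condition~\ref{def::DLR_antisym}, while its restriction to $\Sigma A\otimes\Sigma M$ merely \emph{defines} the bracket there as $\rho_\tau=-(\Sigma\tau_{A,A})\rho\,\tau_{\Sigma A,\Sigma M}$, consistently with the formula appearing in~\ref{def::DRL_compatible2}. By Proposition~\ref{derivation}, antisymmetry together with the left derivation property implies the right derivation property; evaluating the latter on a product in the second slot yields, on $\Sigma M\otimes\Sigma(A\cdot A)$, that $\rho$ is a derivation in its second input (the anchor condition of~\ref{def::anchor}), and on $\Sigma M\otimes\Sigma(A\cdot M)$ and $\Sigma M\otimes\Sigma(M\cdot A)$ exactly the first compatibility~\ref{def::DRL_compatible1} (the two formulas $\phi^l$, $\phi^r$). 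The remaining instances (products in the first slot) are then forced by antisymmetry and carry no new information.

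The heart of the argument is the double Jacobi identity. Since $f$ is a double bracket, its double jacobiator $DJ_f$ is a triple derivation (as in \cite{VdB08}), so $DJ_f=0$ holds on all of $T_A(M)$ as soon as it holds on the multiplicative generators, namely on tensors whose factors lie in $A$ or in $M$. A degree count in $T_A(M)$ shows that each application of $f$ lowers total degree by one, hence $DJ_f$ lowers it by two: on inputs with at most one $M$-factor the relevant graded piece of the target vanishes (equivalently, every summand contains a bracket $\DB{a}{a'}=0$ or a composite $\DB{a}{\,\cdot\,}$ landing in $A\otimes A$), so these cases are automatic. Using the $\Z/3\Z$-stability of $DJ_f$ (Remark~\ref{rem::dJacobi_stable_Z3Z}), the two surviving cases each reduce to a single component: on $\Sigma A\otimes(\Sigma M)^{\otimes 2}$ the jacobiator lands in $\Sigma A^{\otimes 3}$ and its three cyclic terms are precisely the three summands of~\ref{def::DRL_compatible2}, whereas on $(\Sigma M)^{\otimes 3}$ it lands in $\Sigma M\otimes A^{\otimes 2}$ and reproduces~\ref{def::DRL_DJacobi}. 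I expect this bookkeeping — tracking which component ($l$ or $r$) of $\DB{-}{-}^M$ feeds into an outer $\rho$ versus $\rho_\tau$, together with the associated twists — to be the main obstacle; the rest is matching symbols.

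Finally, functoriality is immediate from the same determined-by-generators principle: a morphism on either side is an $A$-bimodule map $\phi:M\to N$, it extends to the $A$-algebra map $T_A(\phi)$, and the intertwining $\phi(\DB{-}{-}^M)=\DB{\phi(-)}{\phi(-)}^N$ says exactly that $T_A(\phi)$ is a $\Sigma$-double Poisson morphism. The two assignments $M\mapsto T_A(M)$ and $(T_A(M),\DB{-}{-})\mapsto M$ are therefore mutually inverse functors, yielding the equivalence $\Sigma\mbox{-}\mathtt{DPFree}_A^{lin}\cong\Sigma\mbox{-}\mathtt{DLR}_A$.
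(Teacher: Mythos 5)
Your proposal is correct and follows essentially the same route as the paper's proof: reduce to the restrictions on $\Sigma A\otimes\Sigma A$, $\Sigma M\otimes\Sigma A$ and $\Sigma M\otimes\Sigma M$ via the determined-by-generators lemma, match antisymmetry and the (left/right) derivation properties with the anchor conditions, identify the two nontrivial weight components of the double jacobiator (on $\Sigma A\otimes(\Sigma M)^{\otimes 2}$ and $(\Sigma M)^{\otimes 3}$, using the $\Z/3\Z$-invariance of Remark~\ref{rem::dJacobi_stable_Z3Z}) with conditions \eqref{def::DRL_compatible2} and \eqref{def::DRL_DJacobi}, and conclude functoriality by extension along $T_A(-)$ and restriction via $pr_N\circ\psi\circ i$. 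Your explicit weight-count showing the jacobiator vanishes automatically on inputs with at most one $M$-factor is a useful detail the paper leaves implicit, but it does not change the argument.
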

\begin{proof}
	The anchor $\rho$ of a $\Sigma$-double Lie-Rinehart algebra $M$ over $A$ is a morphism $\rho:\Sigma M \otimes \Sigma A \rightarrow \Sigma A\otimes A$ which we extend by antisymmetry, to a double bracket $\DB{-}{-}^A : \Sigma M \otimes \Sigma A \oplus \Sigma A \otimes \Sigma M \rightarrow \Sigma A\otimes A$.
	
	The condition \eqref{def::DRL_compatible1} of definition \ref{def::dLieRinehart} corresponds to the derivation properties of the restriction $\Sigma M \otimes \Sigma A \oplus \Sigma A \otimes \Sigma M$ of a linear $\Sigma$-double bracket on $T_A M$. Hence, a linear $\Sigma$-double bracket on $T_A M$ corresponds to an anchor $\rho : \Sigma M \otimes \Sigma A\rightarrow \Sigma A\otimes A$ and a morphism  $f : \Sigma M\otimes \Sigma M \rightarrow \Sigma (M \otimes A \oplus A\otimes M)$ such that conditions \eqref{def::DLR_antisym} and \eqref{def::DRL_compatible1} of definition \ref{def::dLieRinehart} are satisfied. 
	
	We will check that the conditions \eqref{def::DRL_compatible2} and \eqref{def::DRL_DJacobi} of definition \ref{def::dLieRinehart} exactly correspond to the double Jacobi identity for the associated linear $\Sigma$-double bracket on $T_A(M)$. The double-jacobiator of $T_AM$, restricted to $\Sigma A\otimes \Sigma M \otimes \Sigma M$ is given by the following diagram 
	\[
	\xymatrix@C=0.78cm{
		\Sigma M\otimes \Sigma A\otimes \Sigma M \ar[d]|(0.4){(\rho^*\otimes A)(\Sigma M\otimes \rho^*_r)}
		&& \Sigma A\otimes \Sigma M \otimes \Sigma M 
		\ar[d]|(0.4){(\rho^*_r\otimes A)(\Sigma A\otimes \DB{-}{-}_l)}
		\ar[rr]^{\tau_{\Sigma A,\Sigma M \otimes \Sigma M }}
		\ar[ll]_{\tau_{\Sigma A \otimes \Sigma M , \Sigma M }}
		&& \Sigma M \otimes \Sigma M \otimes \Sigma A 
		\ar[d]|(0.4){(\rho^*\otimes A)(\Sigma A\otimes \rho^*)} \\
		\Sigma \big( A\big)^{\otimes 3}\ar[rr]_{\Sigma\tau_{ A, A\otimes A}} 
		&& \Sigma\big(A\big)^{\otimes 3}  
		&& \Sigma\big(A\big)^{\otimes 3} \ar[ll]^{\Sigma\tau_{A\otimes  A,  A}} ,
	}
	\]
	then, morphisms $\rho$ and $\DB{-}{-}$ satisfy the condition \eqref{def::DRL_compatible2} of the $\Sigma$-double Lie-Rinehart algebra. 	The restriction to $(\Sigma M)^{\otimes 3}$ of the double-jacobiator on $T_A(M)$ takes values in
	\[
		\Sigma( M \otimes A^{\otimes 2}\oplus A^{\otimes 2}\otimes M \oplus A\otimes M\otimes A).
	\]
	By invariance under the $\Z/3\Z$-action (see remark \ref{rem::dJacobi_stable_Z3Z}), the vanishing of this restriction is equivalent to the vanishing of its projection on $\Sigma M\otimes A^{\otimes 2}$. This projection is given by the sum of the morphisms $(\Sigma M)^{\otimes 3}\rightarrow \Sigma M\otimes A^{\otimes 2}$ given in  the following commutative diagram
	\[
	\xymatrix@C=1cm{
		\big(\Sigma M\big)^{\otimes 3}
		\ar[d]|{(\DB{-}{-}^M_r\otimes A)(\Sigma M\otimes \DB{-}{-}^M_l)~~}
		&& \big(\Sigma M\big)^{\otimes 3}
		\ar[d]|{(\DB{-}{-}_l^M\otimes A)(\Sigma A\otimes \DB{-}{-}_l^M)}
		\ar[rr]^{\tau_{\Sigma M,\Sigma M \otimes \Sigma M }}
		\ar[ll]_{\tau_{\Sigma M \otimes \Sigma M , \Sigma M }}
		&&\big(\Sigma M\big)^{\otimes 3}
		\ar[d]|{(\rho^M\otimes A)(\Sigma M\otimes\DB{-}{-}^M_r)} \\
		\Sigma A\otimes M \otimes A \ar[rr]_{\Sigma\tau_{ A, M\otimes  A} } 
		&& \Sigma M\otimes A^{\otimes 2}  
		&& \Sigma A^{\otimes 2} \otimes M \ar[ll]^{\Sigma\tau_{ A\otimes  A, M}};
	}
	\]
	the vanishing of the restriction of the double-jacobiator to  $(\Sigma M)^{\otimes 3}$ is equivalent to the condition \eqref{def::DRL_DJacobi}  of definition \ref{def::dLieRinehart}. 
	
	Then, if we consider $(T_A(M),f:=\DB{-}{-})$ a linear $\Sigma$-double Poisson algebra,  by taking the following restrictions of the linear $\Sigma$-double bracket, the morphisms
	\[
		\rho^M:= f|_{\Sigma M \otimes A}~~\mbox{and}~~ \DB{-}{-}^{M}:=f|_{\Sigma M \otimes \Sigma M}
	\]
	make $M$ a $\Sigma$-double Lie Rinehart algebra over $A$.
	
	Conversely, consider $(M,\rho,\DB{-}{-})$ a $\Sigma$-double Lie Rinehart algebra over $A$. By the universal property of $T_A(M)$ (see \ref{sect::DP_free_monoid}), we extend by derivation the morphism $\rho : \Sigma M \otimes A \rightarrow \Sigma A\otimes A$ to 
	\[
		\widetilde{\rho} : \Sigma T_A(M) \otimes A \rightarrow \Sigma T_A(M)\otimes T_A(M),
	\]
	which is a derivation in its first input. We extend $\widetilde{\rho}$ to a morphism 
	\[
		\DB{-}{-}^A : \Sigma T_A(M) \otimes \Sigma T_A(M) \rightarrow \Sigma T_A(M)\otimes T_A(M)
	\]
	by antisymmetry. Similarly, we extend $\DB{-}{-}$ to a double derivation
	\[
		\DB{-}{-}^M : \Sigma T_A(M) \otimes \Sigma T_A(M) \rightarrow \Sigma T_A(M)\otimes T_A(M).
	\]
	The sum $\DB{-}{-}^A+\DB{-}{-}^M$ gives a linear $\Sigma$-double Poisson bracket on $T_A(M)$ because we have proved that the double Lie-Rinehart conditions \eqref{def::DLR_antisym}, \eqref{def::DRL_compatible1}, \eqref{def::DRL_compatible2} and \eqref{def::DRL_DJacobi} are equivalent to the the double Poisson conditions.
	
	Let $(M,\DB{-}{-}^M)$ and $(N,\DB{-}{-}^N)$ be two $\Sigma$-double Lie-Rinehart algebras and $\phi : M \rightarrow N$, a double Lie-Rinehart algebra morphism. The morphism $\phi$ ginduces a morphism of $A$-algebras $\phi' : T_A(M) \rightarrow T_A(N)$. We have $\phi\big(\DB{-}{-}^{M}\big)=\DB{\phi(-)}{\phi(-)}^{N}$ and as $\phi'$ is algebra morphism and the double brackets on $\DB{-}{-}^{M}$ and $\DB{-}{-}^{N}$ are constructed by extension by derivation, then $\phi'$ is a $\Sigma$-double Poisson algebra morphism. Hence, we have defined the functor
	\begin{equation}\label{eq::functor}
	T_A(-): \Sigma\mbox{-}\mathtt{DPFree}_A^{lin} \rightarrow \Sigma\mbox{-}\mathtt{DLR}_A.
	\end{equation}
	Let $\psi: T_A(M) \rightarrow T_A(N)$ be  a morphism of linear $\Sigma$-double Poisson algebras. The morphism $\psi' : pr_N\circ \psi\circ i : M \rightarrow N$ with $i : M \hookrightarrow T_A(M)$ and $pr_N : T_A(N) \twoheadrightarrow N$ is an  $A$-bimodule morphism, which commutes with the double brackets. Then,  the functor \ref{eq::functor} is an equivalence of categories.
\end{proof}

\begin{rem}
	By proposition \ref{double_Poisson_Leibniz}, if $M$ is a  $\mathds{1}$-double Lie-Rinehart algebra over $A$, then the morphism 
	\[
	\{-,-\}:=\mu_A^M\DB{-}{-}^M_l +_A\!\mu^M\DB{-}{-}^M_r : M \otimes  M \longrightarrow M
	\]
	yields on  $M$ a left Leibniz algebra structure, which is an $A$-bimodule morphism, where the $A$-bimodule structure on $M\otimes M$ is given by the $A$-bimodule structure of the right factor.
	
	 The composition of the anchor with the product of $A$ is a derivation in its second input:
	\[
	\widetilde{\rho}:=\mu\circ\rho : M \otimes A \longrightarrow A.
	\]
By proposition \ref{prop::eq_LieRinehart_DPlin} and a generalization of \cite[Prop. 2.4.2]{VdB08}, the double-Jacobi identity, restricted to $M\otimes M\otimes A$ implies that $\widetilde{\rho}$ defines on $A$ the structure of an antisymmetrical representation of $M$ (for the definition of representations of left Leibniz algebras, readers can refer to \cite[Def. 1.2.1 and 1.2.4]{Cov10}).
\end{rem}

\section{The shifting property}\label{subsect::decalage}
\subsection{The main result}
In the case of algebras over an operad, a $\Sigma$-shifted structure on an object $M$ is equivalent to a non-shifted structure on $\Sigma M$ (for more detail, the reader can refer to \cite{LV12}). However, it is not true for the case of algebras over a properad. For example, for a chain complex $A$, an $r$-double Lie structure on $A$ is the datum of a double bracket
\[
\DB{-}{-} : A[r]\otimes A[r] \longrightarrow (A\otimes A)[r]
\]
and a $0$-double Lie structure on $A[r]$ is the datum of a double bracket
\[
\DB{-}{-} : A[r]\otimes A[r] \longrightarrow A[r]\otimes A[r]. 
\]
We directly see that these morphisms have different degrees.
However, this shifting property is true for double Lie-Rinehart algebras (which are algebras over the coloured properad $\mathcal{DL}ie\mathcal{R}in$). In fact, the equivalence of categories $\Sigma\otimes-: A\mbox{-}\mathtt{Bimod}_\C \rightarrow A\mbox{-}\mathtt{Bimod}_\C$ induces an equivalence of categories $\Sigma\mbox{-}\mathtt{DLR}_A \rightarrow \mathds{1}\mbox{-}\mathtt{DLR}_A$:
\begin{thm}\label{prop::equivalence_decalage}
	The following assertions are equivalent:
	\begin{enumerate}
		\item $(A,M,\rho_M,\DB{-}{-}_M)$ is a $\Sigma$-double Lie-Rinehart algebra;
		\item $(A,\Sigma M,\rho_{\Sigma M},\DB{-}{-}_{\Sigma M})$ is a  $\mathds{1}$-double Lie-Rinehart algebra;
	\end{enumerate}
	where the anchors $\rho_M$ and $\rho_{\Sigma M}$ are related by 
	\[
		\rho_{\Sigma M}=(\tau_{\Sigma^-, \Sigma M}\otimes A) (\Sigma^-\otimes\rho_{M})
	\]
	(where we implicitly use the isomorphism $\Sigma^-\otimes \Sigma \cong \mathds{1}$) and the double brackets satisfy 
	\[
	\DB{-}{-}_{\Sigma M} =(\Sigma M\otimes A,\tau_{\Sigma,A}\otimes M)\circ \DB{-}{-}.
	\]
	Under this correspondance, we have the equivalence of categories
	\[
	\Sigma\mbox{-}\mathtt{DLR}_A \cong \mathds{1}\mbox{-}\mathtt{DLR}_A.
	\]
\end{thm}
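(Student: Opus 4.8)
The plan is to realise the stated correspondence as transport of structure along the equivalence of categories $\Sigma\otimes-:A\mbox{-}\mathtt{Bimod}_\C\to A\mbox{-}\mathtt{Bimod}_\C$ (Remark \ref{rem::equivalence_categorie}, applied to $A^e\mbox{-}\mathtt{Mod}_\C$), whose quasi-inverse is $\Sigma^-\otimes-$. First I would check that the two pieces of data transform by pure bookkeeping with the symmetry $\tau$ and the isomorphism $\rho:\Sigma^-\otimes\Sigma\cong\mathds{1}$. On the bracket only a reordering of the target is needed: one has $\Sigma(M\otimes A)=\Sigma M\otimes A$ on the nose, while $\Sigma(A\otimes M)\cong A\otimes\Sigma M$ through $\tau_{\Sigma,A}\otimes M$, and this is precisely the stated formula for $\DB{-}{-}_{\Sigma M}$. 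For the anchor, tensoring on the left by $\Sigma^-$, commuting it past $\Sigma M$ with $\tau_{\Sigma^-,\Sigma M}$ and cancelling $\Sigma^-\otimes\Sigma$ turns $\rho_M:\Sigma M\otimes\Sigma A\to\Sigma A\otimes A$ into $\rho_{\Sigma M}:\Sigma M\otimes A\to A\otimes A$, which is the displayed formula. As all these moves are invertible, $(\rho_M,\DB{-}{-}_M)\mapsto(\rho_{\Sigma M},\DB{-}{-}_{\Sigma M})$ is a bijection on structure data, the inverse being the reverse reorderings.

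Next I would verify that each defining condition of Definition \ref{def::dLieRinehart} is preserved under this bijection. The crucial point — which is exactly what fails for the plain double bracket recalled just before the statement, where the $\mathds{1}$-bracket on $\Sigma L$ would need an extra factor of $\Sigma$ on its output that the $\Sigma$-bracket cannot supply — is that in the Lie-Rinehart situation every generating operation loses precisely one $\Sigma$ from source to target, and it does so both for the $\Sigma$-structure on $M$ and for the $\mathds{1}$-structure on $\Sigma M$; passing from $M$ to $\Sigma M$ therefore only redistributes the $\Sigma$'s among the legs rather than changing the degree. Concretely, each axiom is an equality of morphisms built from $\mu$, $\tau$, $\rho_M$ and $\DB{-}{-}_M$; tensoring the whole equality on the suitable side by $\Sigma^-$, reordering by naturality of $\tau$ and cancelling $\Sigma^-\otimes\Sigma$ rewrites it termwise into the corresponding axiom for $(\rho_{\Sigma M},\DB{-}{-}_{\Sigma M})$. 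Since $\Sigma^-\otimes-$ is faithful (being an equivalence), the rewritten equation holds if and only if the original one does, which gives the equivalence of the two systems of axioms.

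I would dispatch the easy conditions first — the $A$-bimodule and derivation property of the anchor \eqref{def::anchor} and the antisymmetry \eqref{def::DLR_antisym}, where a single application of $\tau$ and one cancellation suffice — and then the derivation property \eqref{def::DRL_compatible1}. The hard part will be the two conditions that entangle several structure maps with nontrivial permutations, namely the second anchor compatibility \eqref{def::DRL_compatible2} and the double-Jacobi identity \eqref{def::DRL_DJacobi}. There one must check that conjugation by the suspension-transport isomorphism carries the cyclic symmetriser $\Sigma\tau_{A,A\otimes A}(-)\tau_{\Sigma A\otimes\Sigma A,\Sigma A}$ of the $\Sigma$-setting onto that of the $\mathds{1}$-setting; this is exactly where the signs produced by commuting $\Sigma$ and $\Sigma^-$ past copies of $A$ and $M$ must cancel consistently across the three cyclic summands. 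Naturality of $\tau$ together with the coherence (hexagon) axioms of the symmetric monoidal structure guarantees this, and Remark \ref{rem::dJacobi_stable_Z3Z} ensures it is enough to match a single projected component, which keeps the computation finite.

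Finally I would treat morphisms. A morphism in $\Sigma\mbox{-}\mathtt{DLR}_A$ is an $A$-bimodule map intertwining the double brackets; since the transport isomorphisms above are natural in the bimodule, applying $\Sigma\otimes-$ (respectively $\Sigma^-\otimes-$) sends such a map to an $A$-bimodule map intertwining the transported brackets, and conversely. Hence $\Sigma\otimes-$ is fully faithful, and by the bijection on structure data it is essentially surjective, yielding the asserted equivalence $\Sigma\mbox{-}\mathtt{DLR}_A\cong\mathds{1}\mbox{-}\mathtt{DLR}_A$.
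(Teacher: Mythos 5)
Your proposal is correct and takes essentially the same route as the paper: both realise the correspondence as transport of structure along the equivalence $\Sigma\otimes-$ on $A$-bimodules (with exactly the stated formulas for $\rho_{\Sigma M}$ and $\DB{-}{-}_{\Sigma M}$), and then check termwise, using naturality of $\tau$ and cancellation of $\Sigma^-\otimes\Sigma$, that each axiom of Definition~\ref{def::dLieRinehart} is preserved. The only (harmless) difference is one of emphasis: you anticipate the cyclic conditions \eqref{def::DRL_compatible2} and \eqref{def::DRL_DJacobi} as the delicate part and would dispatch \eqref{def::DRL_compatible1} quickly, whereas the paper obtains antisymmetry, the Jacobi identity and the anchor conditions formally from the transport and reserves its explicit diagram-chasing (the $\phi^{l},\phi^{r},\psi^{l},\psi^{r}$ squares) for the derivation compatibility \eqref{def::DRL_compatible1}, where the internal/external bimodule structures on the targets must be matched by hand.
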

\begin{proof}
	An $A$-bimodule structure on $M$ canonically corresponds to an $A$-bimodule structure on $\Sigma M$ (see section \ref{subsect::bimodule_structure}). The commutative square
	\[
	\xymatrix@R=0.5cm{
		\Sigma(M\otimes A\oplus A\otimes M) \ar[r]^{\cong} \ar[d]_{\Sigma \tau_{M,A}} &\Sigma M\otimes A\oplus A\otimes\Sigma M \ar[d]^{\tau_{\Sigma M,A}} \\
		\Sigma(M\otimes A\oplus A\otimes M) \ar[r]_{\cong} & \Sigma M\otimes A\oplus A\otimes\Sigma M
	}
	\]
	implies we have a canonical correspondance between an antisymmetrical $\Sigma$-double bracket 	$\DB{-}{-}_M: \Sigma M\otimes \Sigma M \longrightarrow \Sigma (M \otimes A \oplus A\otimes M)$ and an antisymmetrical  $\mathds{1}$-double bracket $\DB{-}{-}_{\Sigma M}: \Sigma M\otimes \Sigma M \longrightarrow \Sigma M \otimes A \oplus A\otimes\Sigma M$ given by 
	\[
	\DB{-}{-}_{\Sigma M} =(\Sigma M\otimes A,\tau_{\Sigma,A}\otimes M)\circ \DB{-}{-}.
	\]
	Futhermore, $\DB{-}{-}_M$ satisfies the double Jacobi identity if and only if $\DB{-}{-}_{\Sigma M}$ does too.
	
	By the remark \ref{rem::equivalence_categorie}, the functor $\Sigma^-\otimes -: A\mbox{-}\mathtt{Bimod}_\C \rightarrow  A\mbox{-}\mathtt{Bimod}_\C$ is an equivalence of categories: the morphism $\rho_{M}: \Sigma M \otimes \Sigma A \rightarrow \Sigma A\otimes A$ corresponds to the morphism $\rho_{\Sigma M}:=(\tau_{\Sigma^-, \Sigma M}\otimes A) (\Sigma^-\otimes\rho_{M}): \Sigma M \otimes A \rightarrow A\otimes A$ (where we implicitly  use the isomorphism $\Sigma^-\otimes \Sigma \cong \mathds{1}$). Conversely, by the equivalence $\Sigma\otimes -: A\mbox{-}\mathtt{Bimod}_\C \rightarrow  A\mbox{-}\mathtt{Bimod}_\C$, a morphism $\rho_{\Sigma M}:\Sigma M \otimes A \rightarrow A\otimes A$ corresponds to $\rho_{M}: \Sigma M \otimes \Sigma A \rightarrow \Sigma A\otimes A$. So $\rho_M$ satisfies the condition \eqref{def::DRL_compatible2} of definition  \ref{def::dLieRinehart} if and only if $\rho_{\Sigma M}$ satisfies the condition \eqref{def::DRL_compatible2} of definition \ref{def::dLieRinehart}. 
	
	The equivalence of the anchors' compatibility (cf. condition \eqref{def::dLieRinehart}\eqref{def::DRL_compatible1}) remains to be done. We need to show that the following diagram:
	\[
	\xymatrix{
		& \Sigma M \otimes  A\otimes\Sigma M \ar[dl]_{\cong} \ar@{..>}@/_1pc/[rrdd]^(0.8){\psi^l} \ar[r]^{\Sigma M \otimes _A\!\mu }& \Sigma M \otimes \Sigma M \ar[dl]_{=} \ar@{..>}@/^1pc/[rdd]|(0.7){\DB{-}{-}_{\Sigma M}} & \Sigma M \otimes \Sigma M\otimes A \ar[l]_{\Sigma M \otimes  \mu_A }\ar[dl]_{=} \ar[dd]^{\psi^r} \\
		\Sigma M \otimes \Sigma (A\otimes M) \ar[r]^{\Sigma M \otimes \Sigma _A\!\mu }  \ar@/_1pc/[rd]_(0.4){\phi^l}
		& \Sigma M \otimes \Sigma M \ar[d]^{\DB{-}{-}_M} 
		& \Sigma M \otimes \Sigma (M\otimes A) \ar[l]_{\Sigma M \otimes \Sigma \mu_A } \ar@/^1pc/[ld]^(0.4){\phi^r} &\\
		& \Sigma( M\otimes A \oplus A\otimes M) && \ar[ll]_{\cong} \Sigma M\otimes A \oplus A\otimes\Sigma M
	}
	\]
	commutes, with
	\begin{align*}
	\psi^l:= & ((_A\!\mu,\mu)\otimes M)(A\otimes \DB{-}{-}_{\Sigma M})(\tau_{\Sigma M, A}\otimes\Sigma M) +(A\otimes _A\!\mu)(\rho_{\Sigma M}\otimes \Sigma M)~; \\
	\psi^r:= & (\Sigma M\otimes \mu + A \otimes \mu_A)( \DB{-}{-}_{\Sigma M} \otimes A) + (\mu_A\otimes A)(\Sigma M\otimes \rho_{\Sigma M})(\tau_{\Sigma M, \Sigma M}\otimes A)~;\\
	\phi^l:= & (_A\!\mu\otimes M)(A\otimes \DB{-}{-}_M)(\tau_{\Sigma M \Sigma, A}\otimes M) +(\Sigma A\otimes _A\!\mu)(\rho_M\otimes M)~;\\
	\phi^r:= & (\Sigma M\otimes \mu + \Sigma A \otimes \mu_A)(\DB{-}{-}_M\otimes A)+ (\Sigma \mu_A\otimes A)(\tau_{M,\Sigma}\otimes A\otimes A)(M\otimes \rho_M)(\tau_{\Sigma M\Sigma,M}\otimes A).
	\end{align*}
	It suffices to show that the following squares
	\[
	\xymatrix{
		\Sigma M \otimes  A\otimes\Sigma M \ar[r]^{\psi^l} \ar[d]^{\cong}_{\Sigma M\otimes \tau_{A,\Sigma}\otimes M} &  \Sigma M\otimes A \oplus A\otimes\Sigma M \ar[d]_{\cong}^{(\mathrm{id},\tau_{A,\Sigma}\otimes M)}\\
		\Sigma M \otimes \Sigma (A\otimes M) \ar[r]_{\phi^l} & \Sigma( M\otimes A \oplus A\otimes M)
	}
	\qquad 
	\xymatrix{
		\Sigma M \otimes  \Sigma M\otimes A \ar[r]^{\psi^r} \ar[d]_{=} &  \Sigma M\otimes A \oplus A\otimes\Sigma M \ar[d]_{\cong}^{(\mathrm{id},\tau_{A,\Sigma}\otimes M)}\\
		\Sigma M \otimes \Sigma (M\otimes A) \ar[r]_{\phi^r} & \Sigma( M\otimes A \oplus A\otimes M)
	}
	\]
	commute. We have commutative diagrams
	\[
	\xymatrix{
		\Sigma M \otimes A \otimes \Sigma M \ar[d]_{\rho_{\Sigma M}\otimes \Sigma M} \ar@<0.2pc>[r]^{\Sigma M \otimes \tau_{A,\Sigma}\otimes M}
		& \Sigma M \otimes \Sigma A \otimes M \ar[d]^{\rho_M\otimes M}~~\\
		A\otimes A \otimes \Sigma M \ar[d]_{A \otimes _A\!\mu}
		& \Sigma A\otimes A \otimes M \ar[d]^{\Sigma A\otimes _A\!\mu}~~\\
		A \otimes \Sigma M \ar[r]_{\tau_{A,\Sigma}\otimes M}
		& \Sigma A\otimes M
	}
	\qquad 
	\xymatrix{
		\Sigma M \otimes \Sigma M \otimes A \ar[r]^{=} \ar[d]_{\tau_{\Sigma M,\Sigma M}\otimes A}
		& \Sigma M \otimes \Sigma M \otimes A \ar[d]^{\tau_{\Sigma M \Sigma,M}\otimes A}~~\\
		\Sigma M \otimes \Sigma M \otimes A \ar[d]_{\Sigma M\otimes \rho_{\Sigma M}} 
		& M\otimes \Sigma M \otimes \Sigma A \ar[d]^{M\otimes \rho_M} ~~\\
		\Sigma M\otimes A\otimes A \ar[d]_{\mu_A\otimes A}
		&M\otimes \Sigma A\otimes A \ar[d]|{(\Sigma\mu_A\otimes A)(\tau_{M,\Sigma}\otimes A\otimes A)}~~\\
		\Sigma M \otimes A \ar[r]_{=} & \Sigma M\otimes A		
	}~~;
	\]
	as $\DB{-}{-}_{\Sigma M} = (\mathrm{id},\tau_{\Sigma,A}\otimes M)\circ\DB{-}{-}_M$, we have following equalities:
	\begin{align*}
	&(\mathrm{id},\tau_{A,\Sigma}\otimes M)((_A\!\mu,\mu)\otimes M)(A\otimes \DB{-}{-}_{\Sigma M})(\tau_{\Sigma M, A}\otimes\Sigma M)=~~\\
	&\qquad(_A\!\mu\otimes M)(A\otimes \DB{-}{-}_M)(\tau_{\Sigma M \Sigma, A}\otimes M)(\Sigma M\otimes \tau_{A,\Sigma}\otimes M)
	\end{align*}
	and
	\[
	(\mathrm{id},\tau_{A,\Sigma}\otimes M)(\Sigma M\otimes \mu + A \otimes \mu_A)( \DB{-}{-}_{\Sigma M} \otimes A) =(\Sigma M\otimes \mu + \Sigma A \otimes \mu_A)(\DB{-}{-}_M\otimes A)
	\]
	so $(\mathrm{id},\tau_{A,\Sigma}\otimes M) \circ \psi^r = \phi^r$. 
\end{proof}

\begin{exmp}
	In the case where $\C$ is the category of chain complexes $\Ch_k$,  we consider a linear $0$-double Poisson bracket on $T(M)$, as in example \ref{exmp::DLR_associatif}, which corresponds to a (non-unital) associative product on $M$,  by example \ref{exmp::DLR_associatif}. Then, after proposition  \ref{prop::eq_LieRinehart_DPlin}, the shifting property implies that $M[1]$ has a (non-unital) associative product of homological degree $-1$. This recovers the shifting property of algebras over the operad $\mathcal{A}s$: a (non-unital) associative product of degree $-1$ on $M$ correspond to a non-unital associative algebra structure of degree $0$ on $M[-1]$.
\end{exmp}


\subsection{Example : the Koszul double bracket}\label{subsect::dcrochet_Koszul}
For this example, we suppose the category $\C$ has coequalizers. We begin by recalling the definition of the $A$-bimodule of noncommutative one forms associated to a unital monoid $A$ (see \cite{Lod98} for details in the case of $\C=\Ch_k$).
\begin{prop}[Noncommutative differential $1$-forms]\label{def::OmegaA}
	Let $(A,\mu	,\iota)$ be a unital associative monoid in $\C$. We define the \emph{$A$-bimodule of noncommutative differential $1$-forms} $\Omega_A$ as the coequalizer 
	\[
	\xymatrix{
		A^{\otimes 4}  \ar@<-0.2pc>[rr]_{\mu\otimes A^{\otimes 2}+ A^{\otimes 2}\otimes \mu} \ar@<0.2pc>[rr]^{A\otimes \mu\otimes A}
		&& A^{\otimes 3} \ar[r]^{\tilde{d}}
		& \Omega_A^1
	}
	\]
	in the category of $A$-bimodules in $\C$, where $A^{\otimes i}$, for $i=3,4$, are equipped with their external $A$-bimodule structure. We denote by 
	\[
	d:\tilde{d}(\iota\otimes A\otimes \iota) : A \longrightarrow \Omega_A
	\]
	the universal derivation.
\end{prop}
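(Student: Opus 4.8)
The plan is to prove that $d$ deserves the name \emph{universal derivation}, that is, that for every $A$-bimodule $M$ composition with $d$ induces a natural bijection $\Hom_{A\mbox{-}\mathtt{Bimod}_\C}(\Omega_A,M)\cong\Der(A,M)$, $f\mapsto f d$. First I would record that the two parallel arrows $A^{\otimes 4}\rightrightarrows A^{\otimes 3}$ are morphisms of $A$-bimodules for the external structure: the leftmost and rightmost tensor factors are untouched by either map, and the inner multiplications commute with the outer actions by associativity of $\mu$. Since $\C$ has coequalizers and tensoring preserves them (as in $\C=\Ch_k$), this coequalizer may be formed in $A\mbox{-}\mathtt{Bimod}_\C$, where it is computed by the underlying coequalizer in $\C$; thus $\tilde d:A^{\otimes 3}\twoheadrightarrow\Omega_A$ is a regular epimorphism of $A$-bimodules. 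Because $A\otimes A\otimes A$ with its external structure is the free $A$-bimodule on its middle factor and $\tilde d$ is epi, $\Omega_A$ is generated as an $A^e$-module by the image of $d=\tilde d(\iota\otimes A\otimes\iota)$; this observation will supply uniqueness at the end.

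Next I would check that $d$ is a derivation in the sense recalled before Definition \ref{def::dcrochet_decale}. Whiskering the coequalizing identity $\tilde d(A\otimes\mu\otimes A)=\tilde d(\mu\otimes A^{\otimes 2}+A^{\otimes 2}\otimes\mu)$ with $\iota\otimes A\otimes A\otimes\iota$ and using unitality $\mu(\iota\otimes A)=\mathrm{id}=\mu(A\otimes\iota)$, the left-hand side collapses to $d\mu$, while the two summands on the right become ${}_A\mu(A\otimes d)$ and $\mu_A(d\otimes A)$ respectively. Hence $d\mu=\mu_A(d\otimes A)+{}_A\mu(A\otimes d)$, which is exactly the Leibniz rule.

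For the universal property, given an $A$-bimodule $M$ and a derivation $\delta:A\to M$, I would form the external bimodule morphism $\widehat\delta:={}_A\mu(A\otimes\mu_A)(A\otimes\delta\otimes A):A^{\otimes 3}\to M$ (informally, $a_0\otimes a_1\otimes a_2\mapsto a_0\,\delta(a_1)\,a_2$). The key computation is that $\widehat\delta$ coequalizes the two arrows: expanding $\widehat\delta(A\otimes\mu\otimes A)$ by the Leibniz rule for $\delta$ together with the module axioms yields precisely $\widehat\delta(\mu\otimes A^{\otimes 2}+A^{\otimes 2}\otimes\mu)$. By the universal property of the coequalizer there is then a unique $A$-bimodule morphism $f:\Omega_A\to M$ with $f\tilde d=\widehat\delta$, and precomposing with $\iota\otimes A\otimes\iota$ gives $fd=\delta$. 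Since $\Omega_A$ is generated over $A^e$ by $\mathrm{im}\,d$, any bimodule morphism out of $\Omega_A$ is determined by its restriction along $d$, so $f$ is the \emph{unique} such morphism; naturality in $M$ is immediate from the construction. This establishes the required bijection and proves that $d$ is the universal derivation.

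The only delicate points are categorical rather than computational: (a) the coequalizer must persist in $A\mbox{-}\mathtt{Bimod}_\C$, which reduces to right-exactness of the external tensoring functor $A\otimes-\otimes A$, automatic once $\otimes$ preserves coequalizers, as it does in $\Ch_k$; and (b) since a general $\C$ has no elements, the two coequalizing verifications must be carried out diagrammatically, each obtained by whiskering the single Leibniz relation with the unit and associativity constraints. I expect (b), namely the element-free rewriting of $\widehat\delta(A\otimes\mu\otimes A)=\widehat\delta(\mu\otimes A^{\otimes 2}+A^{\otimes 2}\otimes\mu)$, to be the main (though entirely routine) obstacle.
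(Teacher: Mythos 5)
Your argument is correct, and in fact there is nothing in the paper to compare it against: the paper states this proposition (despite the name, it is a definition-style statement) with no proof at all, and even the universal property you establish is exactly the content of the subsequent Proposition \ref{prop::sDer_representable}, which is likewise asserted without proof. Your proposal thus supplies the missing standard argument: the whiskering of the coequalizing identity by $\iota\otimes A\otimes A\otimes\iota$ correctly yields the Leibniz rule $d\mu=\mu_A(d\otimes A)+{}_A\mu(A\otimes d)$, the extension $\widehat\delta={}_A\mu(A\otimes\mu_A)(A\otimes\delta\otimes A)$ is an external-bimodule map that coequalizes the pair precisely because $\delta$ is a derivation, and uniqueness follows, as you say, from the freeness of $A^{\otimes 3}$ (external structure) on its middle factor together with $\tilde d$ being epi. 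One small point of divergence worth noting: the paper takes the coequalizer \emph{directly} in the category of $A$-bimodules, which sidesteps the question of whether the underlying coequalizer in $\C$ carries the actions, whereas you compute in $\C$ and descend the $A$-actions; your route genuinely needs the right-exactness of $A\otimes-\otimes A$ that you flag in point (a) (the paper, which only assumes $\C$ additive with coequalizers in Section \ref{subsect::dcrochet_Koszul}, is silent on this, and strictly speaking its formulation instead requires knowing that $A\mbox{-}\mathtt{Bimod}_\C$ has coequalizers). Your element-wise verifications in (b) are routine to rewrite diagrammatically, as you anticipate; this is the same construction as Loday's in the case $\C=\Ch_k$, which is the reference \cite{Lod98} the paper points to.
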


\begin{prop}\label{prop::sDer_representable}
	Let $A$ be a unital associative monoid. 	The $A$-bimodule $\Omega_A$ satisfies the following universal property: for an $A$-bimodule $M$ and a derivation $h : A\rightarrow M$, there exists a unique $A$-bimodule morphism $i_h : \Omega_A \rightarrow M$ such that $h=i_h\circ d$. That is, we have the canonical isomorphism 
	\[
	\begin{array}{ccc}
		\Der(A,M)& \overset{\cong}{\longrightarrow} & \Hom_{A\mbox{-}\mathtt{Bimod}}(\Omega_A,M). \\
		h & \longmapsto &i_h
	\end{array}
	\]
	where $\Der(A,M)$ is the subset of $\Hom_\C(A,M)$ of morphisms satisfying the derivation property, i.e. $\phi:A\rightarrow M \in \Der(A,M)$ if $\phi\circ\mu= \mu_A(\phi\otimes A) + _A\!\mu(A\otimes \phi)$.  We also have the canonical isomorphism of $A$-bimodules: for an $A$-bimodule $M$ and $X$ and $Y$ two objects in $\C$
	\[
	\Der(X\otimes A\otimes Y,M)\cong \Hom_{A\mbox{-}\mathtt{Bimod}}(X\otimes \Omega_A \otimes Y,M).
	\]
\end{prop}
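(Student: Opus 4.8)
My plan is to prove the first (representability) assertion, from which the second follows by the same method. Everything rests on the \emph{free--bimodule adjunction}
\[
\Hom_{A\mbox{-}\mathtt{Bimod}}(A\otimes X\otimes A,M)\cong \Hom_\C(X,M),\qquad \phi\longmapsto \phi\,(\iota\otimes X\otimes \iota),
\]
valid for every $X\in\C$ and every $A$-bimodule $M$: here $A\otimes X\otimes A$ carries its external structure and is the free $A$-bimodule on $X$ (equivalently the free $A^e$-module $A^e\otimes X$), the unit of the adjunction being $X\cong\mathds 1\otimes X\otimes \mathds 1\xrightarrow{\iota\otimes X\otimes\iota}A\otimes X\otimes A$. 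I would isolate this adjunction first, since it is the one algebraic input used repeatedly.

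First I would feed in the universal property of the coequalizer defining $\Omega_A$: an $A$-bimodule morphism $\Omega_A\to M$ is the same datum as an $A$-bimodule morphism $\phi:A^{\otimes 3}\to M$ (external structure) coequalizing $A\otimes\mu\otimes A$ and $\mu\otimes A^{\otimes 2}+A^{\otimes 2}\otimes\mu$. Applying the adjunction with $X=A$, such $\phi$ correspond bijectively to morphisms $\psi:=\phi(\iota\otimes A\otimes\iota):A\to M$ in $\C$, and $\phi$ is recovered as $\phi=\mathrm{act}\circ(A\otimes\psi\otimes A)$, where $\mathrm{act}:A\otimes M\otimes A\to M$ denotes the bimodule action.

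The crux is to translate the coequalizing condition into the derivation property of $\psi$. One computes, using associativity of the action, that $\phi\circ(A\otimes\mu\otimes A)$ and $\phi\circ(\mu\otimes A^{\otimes 2}+A^{\otimes 2}\otimes\mu)$ are both of the shape $\mathrm{act}\circ(A\otimes g\otimes A)$ with $g:A\otimes A\to M$ equal respectively to $\psi\mu$ and to ${}_A\mu(A\otimes\psi)+\mu_A(\psi\otimes A)$. Applying the adjunction once more, now with $X=A\otimes A$ so that $A\otimes(A\otimes A)\otimes A$ is free on $A\otimes A$, the assignment $g\mapsto\mathrm{act}(A\otimes g\otimes A)$ is injective; hence the two composites agree if and only if $\psi\mu={}_A\mu(A\otimes\psi)+\mu_A(\psi\otimes A)$, i.e. if and only if $\psi\in\Der(A,M)$. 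This yields the natural bijection $\Hom_{A\mbox{-}\mathtt{Bimod}}(\Omega_A,M)\cong\Der(A,M)$. Tracing $\tilde d$ through the adjunction gives $i_h d=i_h\tilde d(\iota\otimes A\otimes\iota)=\tilde h(\iota\otimes A\otimes\iota)=h$, so the inverse bijection is exactly $h\mapsto i_h$; and if $g:\Omega_A\to M$ satisfies $gd=h$, then $g\tilde d$ and $\tilde h=\mathrm{act}(A\otimes h\otimes A)$ have the same image under the adjunction, whence $g\tilde d=\tilde h$ and $g=i_h$ by the uniqueness clause in the coequalizer's universal property.

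For the second isomorphism I would run the identical argument after tensoring the defining coequalizer by $X$ on the left and $Y$ on the right. Since $\C$ is additive the coequalizer is $\Coker(f_1-f_2)$, computed in $\C$ through the exact forgetful functor to $\C$, and $X\otimes-\otimes Y$ preserves it (automatic when $\C$ is closed, e.g. $\C=\Ch_k$ over a field); thus $X\otimes\Omega_A\otimes Y$ is the coequalizer of $X\otimes A^{\otimes 4}\otimes Y\rightrightarrows X\otimes A^{\otimes 3}\otimes Y$. Reordering factors by the symmetry $\tau$ exhibits $X\otimes A^{\otimes 3}\otimes Y$ as the free $A$-bimodule on $X\otimes A\otimes Y$, and the same reduction identifies the coequalizing morphisms with $\Der(X\otimes A\otimes Y,M)$. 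The main obstacle I anticipate is not conceptual but diagrammatic: carrying out the translation of the coequalizing relation into the Leibniz identity \emph{without elements}, keeping precise track of the symmetry isomorphisms $\tau$ and of the external versus internal bimodule structures on each tensor factor.
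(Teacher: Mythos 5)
The paper states this proposition \emph{without proof} (it is the categorical version of the classical representability of derivations by K\"ahler-type differentials, cf.\ \cite{Lod98} for $\C=\Ch_k$), so there is no internal argument to compare yours against; judged on its own terms, your proof is correct, and it is the argument the author evidently has in mind. Your reduction is sound at every step: by the universal property of the coequalizer, $\Hom_{A\mbox{-}\mathtt{Bimod}}(\Omega_A,M)$ is the set of bimodule maps $\phi:A^{\otimes 3}\to M$ equalizing $A\otimes\mu\otimes A$ and $\mu\otimes A^{\otimes 2}+A^{\otimes 2}\otimes\mu$; the free--bimodule adjunction (which does require the unit $\iota$, available since $A$ is assumed unital here) identifies such $\phi$ with $\psi=\phi(\iota\otimes A\otimes\iota)$; and your second application of the adjunction, to the free bimodule $A\otimes(A\otimes A)\otimes A$, is precisely what licenses translating the coequalizing identity into the Leibniz identity $\psi\mu=\mu_A(\psi\otimes A)+{}_A\mu(A\otimes\psi)$ — the rewritings $\phi f_i=\mathrm{act}\,(A\otimes g_i\otimes A)$ use only associativity and bimodule compatibility of the actions, and injectivity of $g\mapsto \mathrm{act}\,(A\otimes g\otimes A)$ follows by composing with $\iota\otimes A^{\otimes 2}\otimes\iota$. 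The identity $i_h d=h$ and the uniqueness step (via $\tilde d$ being an epimorphism, as any coequalizer map) are likewise correct.

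Two points deserve to be made explicit, both of which you at least partially flag. First, for the second isomorphism, tensoring the defining coequalizer with $X\otimes-\otimes Y$ is only legitimate if that functor preserves the coequalizer; this does \emph{not} follow from the paper's standing hypotheses (additive symmetric monoidal with coequalizers), and your parenthetical "automatic when $\C$ is closed" is the right sufficient condition — it covers the cases the paper actually uses ($\Ch_k$, $\DGA_k$), but it should be stated as a hypothesis rather than an aside; the same caveat already lurks in the claim that the coequalizer in $A$-bimodules is computed in $\C$ (your word "exact" for the forgetful functor is loose in a merely additive $\C$). Second, the paper never defines $\Der(X\otimes A\otimes Y,M)$; your reading — morphisms $X\otimes A\otimes Y\to M$ satisfying the Leibniz rule in the $A$-slot, with the actions conjugated by the symmetry $\tau$, which is exactly what falls out of identifying $X\otimes A^{\otimes 3}\otimes Y\cong A\otimes(X\otimes A\otimes Y)\otimes A$ as the free bimodule — is the intended one, but since the proof of the second isomorphism consists precisely in matching the coequalizing condition against this definition, you should record the definition before invoking it. Neither point is a flaw in the strategy; both are hypotheses the paper itself leaves tacit.
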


In this section, we consider $A$, a unital associative monoid, with a $\Sigma$-double Poisson bracket:
\[
\DB{-}{-}:\Sigma A \otimes \Sigma A \longrightarrow \Sigma A\otimes A.
\]
Canonically, we associate to this double-bracket, a $\Sigma$-double Lie-Rinehart structure (over $A$) on $\Omega_A$, the $A$-bimodule of noncommutative one forms. By the derivation property of $\DB{-}{-}$, proposition \ref{prop::sDer_representable} implies that we can extend the double bracket to the following $A^e$-bimodule morphism:
\[
	\phi : \Sigma \Omega_A \otimes \Sigma \Omega_A  \longrightarrow\Sigma A\otimes A,
\]
where the $A^e$-bimodule structure is given by the external and internal $A$-bimodule structures. By composition with $d$ (extended as a derivation to $A\otimes A$), we obtain, in the category $\C$, the morphism
$\DB{-}{-}^\Omega$, called the \emph{Koszul double bracket}: 
\[
\xymatrix@C=2cm{
	\Sigma A\otimes \Sigma A \ar[r]^{\DB{-}{-}} \ar[d]_{\Sigma d \otimes \Sigma d}& \Sigma A\otimes A \ar[d]^{d\otimes A+A\otimes d} \\
	\Sigma \Omega_A \otimes \Sigma \Omega_A \ar[ru]|{\phi} \ar@{.>}[r]_(0.4){=: \DB{-}{-}^\Omega}& \Sigma\big( \Omega_A\otimes A \oplus A\otimes \Omega_A\big) .
}
\] 
As in \ref{def::dLieRinehart}, we denote by:
\[
\DB{-}{-}^\Omega_r \overset{}{:=} pr_{\Sigma\Omega_A\otimes A} \circ \DB{-}{-}^\Omega	\quad \mbox{ and }\quad 
\DB{-}{-}^\Omega_l \overset{}{:=} pr_{\Sigma A\otimes \Omega_A} \circ \DB{-}{-}^\Omega
\]
the projections of $\DB{-}{-}^\Omega$ to $\Sigma\Omega_A\otimes A$ and $\Sigma A\otimes \Omega_A$. By the derivation property of $\DB{-}{-}$ and proposition \ref{prop::sDer_representable}, we naturally extend the double bracket to two $A$-bimodule morphisms:
\begin{itemize}
	\item the morphism
	\[
	\rho_l^\Omega : \Sigma \Omega_A \otimes \Sigma A \longrightarrow \Sigma A\otimes A
	\] 
	with, for the left term, the $A$-bimodule structure induced by that of $\Omega_A$ and, for the right term, the internal structure;
	\item the morphism
	\[
	\rho_r^\Omega : \Sigma A \otimes \Sigma \Omega_A \longrightarrow \Sigma A\otimes A
	\] 
	with for the left term, the induced $A$-bimodule structure by that of $\Omega_A$ and the external structure for the right term.
\end{itemize}
By definition, the following diagrams commute:
\[
\xymatrix@C=2cm{
	\Sigma A\otimes \Sigma A \ar[r]^{\DB{-}{-}} \ar[d]_{\Sigma d \otimes \Sigma A}& \Sigma A\otimes A  \\
	\Sigma \Omega_A \otimes \Sigma A \ar@/_1pc/[ru]_{\rho^\Omega_l} &
}
\quad \mbox{ and }\quad 
\xymatrix@C=2cm{
	\Sigma A\otimes \Sigma A \ar[r]^{\DB{-}{-}} \ar[d]_{\Sigma A \otimes \Sigma d} & \Sigma A\otimes A  \\
	\Sigma A \otimes \Sigma \Omega_A \ar@/_1pc/[ru]_{\rho^\Omega_r} .& 
}
\]
As $\DB{-}{-}$ is antisymmetric, we have the following anticommutative diagram:
\[
\xymatrix{
	\Sigma \Omega_A \otimes \Sigma A \ar[r]^{\rho^\Omega_l} \ar[d]_{\tau_{\Sigma \Omega_A,\Sigma A}}\ar@{}[rd]|{\circleddash}
	& \Sigma A\otimes A \ar[d]^{\Sigma \tau_{A,A}}\\
	\Sigma A \otimes \Sigma \Omega_A	\ar[r]_{\rho^\Omega_r} &\Sigma A\otimes A.
}%
\]
We simplify the notation $\rho^\Omega:= \rho ^\Omega_l$ and, in the next proposition, we prove that the morphisms $\rho^\Omega$ and $\DB{-}{-}^\Omega$ provide the $A$-bimodule $\Omega_A$ with a $\Sigma$-double Lie-Rinehart algebra structure. 

\begin{rem}
	Van den Bergh gives a similar construction in  \cite[Prop. A.2.1]{VdB08} but with a weight shifting. Here, we generalize his result without the shifting: Section  \ref{subsect::decalage} allows us to recover Van den Bergh's result.
\end{rem}

\begin{theorem}\label{thm::koszul}
	Let $A$ be a $\Sigma$-double Poisson algebra in an additive symmetric monoidal category  $(\C,\otimes,\tau)$ with coequalizer. The morphisms $\rho^{\Omega}$ and $\DB{-}{-}^{\Omega}$ give  the $A$-bimodule $\Omega_A$ a $\Sigma$-double Lie-Rinehart algebra structure. 	Equivalently, (cf. proposition \ref{prop::eq_LieRinehart_DPlin}), the free  $A$-algebra $T_A\Omega_A$ is a linear $\Sigma$-double Poisson algebra.
\end{theorem}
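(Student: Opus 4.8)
The plan is to verify the five conditions of Definition~\ref{def::dLieRinehart} for the pair $(\rho^\Omega,\DB{-}{-}^\Omega)$, reducing each one, through the universal property of $\Omega_A$ (Proposition~\ref{prop::sDer_representable}) and the universal derivation $d$, to the corresponding axiom of the $\Sigma$-double Poisson bracket $\DB{-}{-}$ on $A$. The guiding principle is that $\Omega_A$ is generated as an $A$-bimodule by the image of $d$, and that every morphism entering these conditions is an $A$-bimodule map or a derivation; hence each identity is determined by its precomposition with a suitable tensor power of $\Sigma d$, where it collapses onto an identity already known for $\DB{-}{-}$. Equivalently, by Proposition~\ref{prop::eq_LieRinehart_DPlin}, it suffices to show that the linear $\Sigma$-double bracket determined by $(\rho^\Omega,\DB{-}{-}^\Omega)$ on $T_A\Omega_A$ is antisymmetric, satisfies the derivation property, and obeys the double Jacobi identity.

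First I would dispose of the anchor and antisymmetry conditions. That $\rho^\Omega$ is an $A$-bimodule morphism which is a derivation in its second input (condition~\eqref{def::anchor}) is immediate from its defining triangle together with the right derivation property of $\DB{-}{-}$ (Proposition~\ref{derivation}): precomposing with $\Sigma d\otimes\Sigma A$ turns the required identity into the derivation property of $\DB{-}{-}$ in the appropriate slot. Antisymmetry of $\DB{-}{-}^\Omega$ (condition~\eqref{def::DLR_antisym}) follows from the antisymmetry of $\DB{-}{-}$: the auxiliary map $\phi$ inherits antisymmetry through the $A^e$-bimodule universal property, and postcomposition with $d\otimes A+A\otimes d$ preserves it, as already recorded in the anticommutative square relating $\rho^\Omega_l$ and $\rho^\Omega_r$. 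The first compatibility~\eqref{def::DRL_compatible1} expresses that $\DB{-}{-}^\Omega$ is a derivation in its second variable, the anchor controlling the $A$-valued part; after precomposing with $\Sigma d\otimes\Sigma d$ and invoking the two commuting triangles defining $\rho^\Omega_l$ and $\rho^\Omega_r$, it reduces to the two-sided derivation property of $\DB{-}{-}$ together with the fact that $d$ is a derivation.

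The heart of the argument is the double Jacobi part, namely conditions~\eqref{def::DRL_compatible2} and~\eqref{def::DRL_DJacobi}. Both are identities between $A$-bimodule morphisms out of $\Sigma A\otimes(\Sigma\Omega_A)^{\otimes 2}$, respectively $(\Sigma\Omega_A)^{\otimes 3}$, whose three summands are assembled from $\rho^\Omega$ and $\DB{-}{-}^\Omega$, hence ultimately from $\DB{-}{-}$ through $d$. Since $\Omega_A$ is bimodule-generated by the image of $d$, it suffices to precompose with $\Sigma d$ in each $\Omega_A$-slot. Using the defining relations $\rho^\Omega_l(\Sigma d\otimes\Sigma A)=\DB{-}{-}$ and $\DB{-}{-}^\Omega(\Sigma d\otimes\Sigma d)=(d\otimes A+A\otimes d)\DB{-}{-}$, each of~\eqref{def::DRL_compatible2} and~\eqref{def::DRL_DJacobi} becomes, term by term, a projection of the double jacobiator $DJ_{\DB{-}{-}}$ of $A$ onto one of the summands of $\Sigma A\otimes A\otimes A$; for~\eqref{def::DRL_DJacobi} one postcomposes with $d$ on the relevant factor and uses Remark~\ref{rem::dJacobi_stable_Z3Z} to pass to the projection onto $\Sigma\Omega_A\otimes A^{\otimes 2}$, exactly as in the proof of Proposition~\ref{prop::eq_LieRinehart_DPlin}. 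Since $A$ is a $\Sigma$-double Poisson algebra, $DJ_{\DB{-}{-}}=0$, and both conditions follow.

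The main obstacle is precisely this last reduction: correctly matching the symmetry braidings $\tau$ on the two sides, in particular the twisted anchor $\rho_\tau$ and the cyclic factors of type $\tau_{\Sigma A,\Sigma A\otimes\Sigma A}$, and checking that $d\otimes A+A\otimes d$ intertwines $DJ_{\DB{-}{-}}$ with the corresponding summands of the Lie-Rinehart conditions. A secondary point requiring care is the justification of the precomposition reduction: one must confirm that both sides of each condition are genuinely $A$-bimodule morphisms of the kind covered by Proposition~\ref{prop::sDer_representable}, so that their agreement on the generators $dA$ forces agreement everywhere. The direct route and the equivalent computation on $T_A\Omega_A$, where the three conditions merge into the single double Jacobi identity for one bracket, give the same result; I would carry out whichever keeps the braiding bookkeeping lightest.
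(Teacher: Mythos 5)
Your proposal is correct and follows essentially the same route as the paper: the paper likewise verifies each condition of Definition \ref{def::dLieRinehart} by precomposing with the appropriate tensor powers of $\Sigma d$ (organized as prism diagrams whose vertical faces commute by the defining diagrams of $\rho^\Omega$ and $\DB{-}{-}^\Omega$ and the derivation property of $d$), thereby reducing antisymmetry, the compatibility \eqref{def::DRL_compatible1}, and the conditions \eqref{def::DRL_compatible2} and \eqref{def::DRL_DJacobi} to the corresponding axioms of $\DB{-}{-}$, with the vanishing of the double jacobiator and the $\Z/3\Z$-invariance of Remark \ref{rem::dJacobi_stable_Z3Z} closing the argument exactly as you describe.
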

\begin{proof}
	Let $(A,\DB{-}{-})$ be a $\Sigma$-double Poisson algebra, we note $\Omega:=\Omega_A$ with $_A\mu^\Omega$ and $\mu_A^\Omega$ the morphisms which define the $A$-bimodule structure of $\Omega$. We'll show that the morphisms $\rho^{\Omega}$ and $\DB{-}{-}^{\Omega}$ give a $\Sigma$-double Lie-Rinehart algebra structure on $\Omega$.	The double bracket $\DB{-}{-}^\Omega$ is antisymmetric: in fact, it is defined by the following commutative diagram 
	\[
	\xymatrix@C=2cm{
		\Sigma A\otimes \Sigma A \ar[r]^{\DB{-}{-}} \ar[d]_{d\otimes d}& \Sigma A \otimes A \ar[d]^{d\otimes A +A\otimes d} \\
		\Sigma \Omega \otimes \Sigma \Omega \ar[r]_{\DB{-}{-}^\Omega} & \Sigma\big( \Omega\otimes A \oplus A\otimes \Omega \big)
	}
	\]
	then $\DB{-}{-}^{\Omega}$ satisfies the antisymmetry condition \eqref{def::DLR_antisym} of definition \ref{def::dLieRinehart} by the antisymmetry of the double bracket of $A$. 
	
	We'll show that $\rho^\Omega$ and $\DB{-}{-}^\Omega$ satisfy the derivation condition  \eqref{def::DRL_compatible1} of definition \ref{def::dLieRinehart}. 
	We note $A_1=A=A_2$; we have the diagram of figure (\ref{fig::premier_diag_comp}).
	\begin{figure}[h]	
		\begin{eqnarray*}
			&\xymatrix@R=0.75cm@C=2cm{
				& \Sigma \Omega \otimes \Sigma A 
				\ar[rr]|{\rho^\Omega} 
				\ar[dd]|(0.2){\Sigma\Omega\otimes \Sigma d }
				&& \Sigma A\otimes A 
				\ar[dd]|(0.55){\Sigma d\otimes A} \\
				\Sigma \Omega\otimes \Sigma A_1\otimes A_2 
				\ar[ru]|{\Sigma\Omega\otimes\Sigma \mu}
				\ar[rr]|{(\rho^\Omega\otimes A)+(A\otimes\rho^\Omega)(\tau_{\Sigma\Omega\otimes\Sigma,A_1}\otimes A_2)}
				\ar[dd]|(0.35){\Sigma\Omega\otimes \Sigma d\otimes A + \Sigma \Omega \otimes \Sigma A\otimes  d)}
				&&{\substack{\Sigma A\otimes A\otimes A_2 \\  \quad \oplus A_1\otimes \Sigma A\otimes A}} 
				\ar[dd]|(0.3){(\Sigma d\otimes A\otimes A, d\otimes\Sigma A\otimes A+A_1\otimes \Sigma d\otimes A)}
				\ar[ru]|(0.45){(\Sigma A\otimes\mu,(\Sigma\mu\otimes A)(\tau_{A_1,\Sigma}\otimes A^{\otimes 2})}
				& \\
				& \Sigma \Omega \otimes \Sigma\Omega
				\ar[rr]_(0.35){\DB{-}{-}^{\Omega}_l}|(0.51)\hole 
				&& \Sigma \Omega\otimes A \\
				{\substack{\Sigma \Omega \otimes \Sigma \Omega \otimes A_2 \\ \quad \oplus \Sigma\Omega\otimes \Sigma A_1 \otimes \Omega }} 
				\ar[rr]^{\big(\DB{-}{-}_l^\Omega\otimes A_2+\Sigma\Omega\otimes \rho^{\Omega}}_{\quad +(A_1\otimes\DB{-}{-}_l^\Omega)(\tau_{\Sigma\Omega\otimes\Sigma,A_1}\otimes \Omega)\big)}
				\ar[ru]|(0.6){(\Sigma\Omega\otimes \Sigma \mu_A^\Omega,\Sigma\Omega\otimes \Sigma _A\mu^\Omega)\qquad\quad}
				&& {\substack{\Sigma\Omega\otimes A\otimes A_2 \\ \quad \oplus \Omega \otimes \Sigma A\otimes A \\ \quad \oplus A_1\otimes\Sigma\Omega\otimes A}}
				\ar[ru]_{\Phi_l}
				& 
			}~~\\
			&~~\\
			&\mbox{with}\qquad \Phi_l:= \big(\Sigma\Omega\otimes \mu,(\Sigma\mu_A^\Omega\otimes A)(\tau_{\Omega,\Sigma}\otimes A^{\otimes 2}),(\Sigma_A\mu^\Omega\otimes A)(\tau_{A,\Sigma}\otimes \Omega\otimes A) \big).	
		\end{eqnarray*}
		\caption{First diagram of compatibility \eqref{def::DRL_compatible1}}
		\label{fig::premier_diag_comp}
	\end{figure}
	The front and back faces commute by definition of $\DB{-}{-}^\Omega_l$. The left and right faces commute because $d$ is a derivation and the top face commutes because $\rho^{\Omega}$ is a derivation. Then the bottom face commutes. 
	
	Similarly, we have the commutative diagram of figure \eqref{fig::second_diag_comp}.
	\begin{figure}[h]	
		\begin{eqnarray*}
			&\xymatrix@R=0.75cm@C=2cm{
				& \Sigma \Omega \otimes \Sigma A 
				\ar[rr]|{\rho^\Omega} 
				\ar[dd]|(0.2){\Sigma\Omega\otimes \Sigma d }
				&& \Sigma A\otimes A 
				\ar[dd]|(0.55){\Sigma A\otimes d} 
				\\
				\Sigma \Omega\otimes \Sigma A_1\otimes A_2 
				\ar[ru]|{\Sigma\Omega\otimes\Sigma \mu}
				\ar[rr]|{(\rho^\Omega\otimes A)+(A\otimes\rho^\Omega)(\tau_{\Sigma\Omega\otimes\Sigma,A_1}\otimes A_2)}
				\ar[dd]|(0.35){\Sigma\Omega\otimes \Sigma d\otimes A + \Sigma \Omega \otimes \Sigma A\otimes  d)}
				&&{\substack{\Sigma A\otimes A\otimes A_2 \\  \quad \oplus A_1\otimes \Sigma A\otimes A}} 
				\ar[dd]|(0.3){(\Sigma A\otimes d\otimes A+\Sigma A\otimes A\otimes d, A\otimes\Sigma A\otimes d)}
				\ar[ru]|(0.45){(\Sigma A\otimes\mu,(\Sigma\mu\otimes A)(\tau_{A_1,\Sigma}\otimes A^{\otimes 2})}
				& \\
				& \Sigma \Omega \otimes \Sigma\Omega
				\ar[rr]_(0.35){\DB{-}{-}^{\Omega}_r}|(0.51)\hole 
				&& \Sigma A\otimes \Omega\\
				{\substack{\Sigma \Omega \otimes \Sigma \Omega \otimes A_2 \\ \quad \oplus \Sigma\Omega\otimes \Sigma A_1 \otimes \Omega }} 
				\ar[rr]^{\big(\DB{-}{-}_r^\Omega\otimes A_2, \rho^{\Omega}\otimes \Omega}_{\quad + (A_1\otimes\DB{-}{-}_r^\Omega)(\tau_{\Sigma\Omega\otimes\Sigma,A_1}\otimes \Omega)\big)}
				\ar[ru]|(0.6){(\Sigma\Omega\otimes \Sigma \mu_A^\Omega,\Sigma\Omega\otimes \Sigma _A\mu^\Omega)\qquad\quad}
				&& {\substack{\Sigma A\otimes \Omega \otimes A_2  \\ \quad \oplus A_1\otimes\Sigma A\otimes \Omega  \\ \quad \oplus  \Sigma A\otimes A \otimes \Omega}}
				\ar[ru]_{\Phi_r}
				& 
			}~~\\
			&~~\\
			&\mbox{with}\qquad \Phi_r:= (\Sigma A\otimes \mu_A^\Omega,(\Sigma\mu\otimes \Omega)(\tau_{A,\Sigma}\otimes A\otimes \Omega),\Sigma A\otimes _A\mu^\Omega ).	
		\end{eqnarray*}
		\caption{Second diagram of compatibility \eqref{def::DRL_compatible1}}
		\label{fig::second_diag_comp}
	\end{figure}	
	Then, the morphisms $\rho^\Omega$ and $\DB{-}{-}^\Omega$ satisfy the compatibility condition \eqref{def::DRL_compatible1} of  definition \ref{def::dLieRinehart}. We show that $\rho^\Omega$ and $\DB{-}{-}^\Omega$ satisfy the condition \eqref{def::DRL_compatible2}. We call $\Psi_L$ the morphism $\DDBL{-}{-}{-}= \big(\DB{-}{-}\otimes A\big)\big(\Sigma A\otimes \DB{-}{-}\big)$. We have the following diagram with  commuting vertical faces:
	\[
	\xymatrix@C=0.77cm{
		& \big(\Sigma A\big)^{\otimes 3} \ar@/_2pc/[ldd]|{\Psi_L} \ar[d]|{\Sigma d\otimes \Sigma A \otimes \Sigma d}
		&& \big(\Sigma A\big)^{\otimes 3} \ar[rr]^{\tau_{\Sigma A,\Sigma A\otimes \Sigma A}}  \ar[ll]_{\tau_{\Sigma A\otimes \Sigma A, \Sigma A}} 
		\ar@/_2pc/[ldd]|{\Psi_L} \ar[d]|{\Sigma A \otimes\Sigma d\otimes  \Sigma d}
		&& \big(\Sigma A\big)^{\otimes 3} \ar@/_2pc/[ldd]|{\Psi_L} \ar[d]|{(\Sigma d)^{\otimes 2}\otimes \Sigma A}\\
		& \Sigma\Omega\otimes \Sigma A\otimes \Sigma\Omega 
		\ar[ld]|{(\rho^\Omega_l\otimes A)(\Sigma \Omega\otimes \rho^\Omega_r)}
		&& \Sigma A\otimes \Sigma\Omega \otimes \Sigma\Omega 
		\ar[ld]|(0.4){(\rho^\Omega_r\otimes A)(\Sigma A\otimes \DB{-}{-}_l^\Omega)}
		\ar[rr]^(0.4){\tau_{\Sigma A,\Sigma \Omega \otimes \Sigma \Omega }}|(0.57)\hole 
		\ar[ll]_(0.6){\tau_{\Sigma A \otimes \Sigma \Omega , \Sigma \Omega }}|(0.43)\hole
		&&\big( \Sigma\Omega \big)^{\otimes 2} \otimes \Sigma A 
		\ar[ld]|{(\rho^\Omega_l\otimes A)(\Sigma \Omega\otimes \rho^\Omega_l)} \\
		\Sigma \big( A\big)^{\otimes 3}\ar[rr]_{\Sigma\tau_{ A, A\otimes A}} 
		&& \Sigma\big(A\big)^{\otimes 3}  
		&& \Sigma\big(A\big)^{\otimes 3} \ar[ll]^{\Sigma\tau_{A\otimes  A,  A}} &.
	}
	\]
	Then, as $\DB{-}{-}$ satisfies the double Jacobi identity, the term 
	\begin{eqnarray*}
		&&(\rho^\Omega_r\otimes A)(\Sigma A\otimes \DB{-}{-}_l^\Omega) \\
		&&\qquad +\tau_{\Sigma A\otimes \Sigma A, \Sigma A}(\rho^\Omega_l\otimes A)(\Sigma \Omega\otimes \rho^\Omega_l)\tau_{\Sigma A,\Sigma \Omega \otimes \Sigma \Omega }  \\
		&&\qquad + \tau_{\Sigma A,\Sigma A\otimes \Sigma A}(\rho^\Omega_l\otimes A)(\Sigma \Omega\otimes \rho^\Omega_r)\tau_{\Sigma A \otimes \Sigma \Omega , \Sigma \Omega }
	\end{eqnarray*}
	is equal to zero, hence the $\DB{-}{-}^\Omega$ and $\rho^\Omega$ satisfy the condition \eqref{def::DRL_compatible2} of definition \ref{def::dLieRinehart}. We have the following diagram with commuting vertical faces:
	\[
	\xymatrix@C=0.78cm{
		& \big(\Sigma A\big)^{\otimes 3} \ar@/_2pc/[ldd]|{\Psi_L}
		\ar[d]|{(\Sigma d)^{\otimes 3}}
		&& \big(\Sigma A\big)^{\otimes 3} \ar[rr]^{\tau_{\Sigma A,\Sigma A\otimes \Sigma A}}  \ar[ll]_{\tau_{\Sigma A\otimes \Sigma A, \Sigma A}} 
		\ar@/_2pc/[ldd]|{\Psi_L} \ar[d]|{(\Sigma d)^{\otimes 3}}
		&& \big(\Sigma A\big)^{\otimes 3} \ar@/_2pc/[ldd]|{\Psi_L} \ar[d]|{(\Sigma d)^{\otimes 3}}\\
		& \big(\Sigma\Omega\big)^{\otimes 3}
		\ar[ld]|{(\DB{-}{-}^\Omega_r\otimes A)(\Sigma \Omega \otimes \DB{-}{-}^\Omega_l)}
		&& \big(\Sigma\Omega\big)^{\otimes 3}
		\ar[ld]|(0.4){(\DB{-}{-}_l^\Omega\otimes A)(\Sigma \Omega\otimes \DB{-}{-}_l^\Omega)}
		\ar[rr]^(0.37){\tau_{\Sigma \Omega,\Sigma \Omega \otimes \Sigma \Omega }}|(0.53)\hole 
		\ar[ll]_(0.63){\tau_{\Sigma \Omega \otimes \Sigma \Omega , \Sigma \Omega }}|(0.46)\hole
		&&\big(\Sigma\Omega\big)^{\otimes 3}
		\ar[ld]|{~~~~(\rho^\Omega_l\otimes A)(\Sigma \Omega\otimes\DB{-}{-}^\Omega_r)}  ~;\\
		\Sigma A\otimes \Omega \otimes A \ar[rr]_{\Sigma\tau_{ A, \Omega\otimes  A} } 
		&& \Sigma\Omega\otimes A^{\otimes 2}  
		&& \Sigma A^{\otimes 2} \otimes \Omega \ar[ll]^{\Sigma\tau_{ A\otimes  A, \Omega}} &
	}
	\]
	as $\DB{-}{-}$ satisfies the double Jacobi identity, the term
	\begin{eqnarray*}
		&&(\DB{-}{-}_l^\Omega\otimes A)(\Sigma \Omega\otimes \DB{-}{-}_l^\Omega) \\
		&&\qquad +\Sigma\tau_{ A, \Omega\otimes  A}(\DB{-}{-}^\Omega_r\otimes A)(\Sigma \Omega \otimes \DB{-}{-}^\Omega_l)\tau_{\Sigma \Omega \otimes \Sigma \Omega , \Sigma \Omega }  \\
		&&\qquad + \Sigma\tau_{ A\otimes  A, \Omega}(\rho^\Omega_l\otimes A)(\Sigma \Omega\otimes\DB{-}{-}^\Omega_r)\tau_{\Sigma \Omega,\Sigma \Omega \otimes \Sigma \Omega }
	\end{eqnarray*}
	is equal to zero. By invariance under the $\Z/3\Z$-action of the double jacobiator, the double bracket $\DB{-}{-}^\Omega$ satisfies the double Jacobi identity.
\end{proof}

With the shifting property, we recove the original construction of the Koszul double bracket of Van den Bergh. In fact, in \cite[Ann. A]{VdB08}, Van den Bergh constructs the Koszul double bracket as a Gerstenhaber double bracket, i.e. a Poisson double bracket of degree $-1$, on $T_A(\Omega_A[1])$. The shifting property \ref{prop::equivalence_decalage}, applied on our Koszul double bracket construction, returns the original Koszul double bracket of Van den Bergh.

\subsection{Example : the Schouten-Nijenhuis double bracket}\label{sect::Schouten}
For this example, we take $\C=\DGA_k$ with $k$ a field, the category of differential $\Z$-graded associative algebras and we consider $A\in \Ob \DGA_k$.
We start by the definition of the $A$-bimodule of biderivations de $A$; biderivations play the role of derivations in noncommutative geometry. 
\begin{definition}[Biderivation]\label{def::biderivation}
	We define $\DDer(A)$, \emph{the external $A$-bimodule of biderivations of $A$}, by
	\[
		\DDer(A):= \Der(A,A\otimes A)
	\]
	where $A\otimes A$ is equipped with its external $A$-bimodule structure. 
\end{definition}

We recall the definition of the Schouten Nijenhuis $0$-double Poisson bracket following Van den Bergh  in \cite[Sect. 3.2]{VdB08}. We consider $A$, a \emph{finitely generated} differential graded algebra:  this implies that the $A$-bimodule $\Omega_A$ is finitely generated. The morphisms $\Phi, \Psi : \DDer(A)^{\otimes 2}\otimes A \rightarrow A^{\otimes 3}$ defined by
\begin{align*}
	\Phi:= &(A\otimes \tau_{A,A})\Big((ev\otimes A)(\DDer(A)\otimes ev) \\
	&\qquad- (A\otimes ev)(\tau_{\DDer(A),A}\otimes A)(\DDer(A)\otimes ev)(\tau_{\DDer(A),\DDer(A)}	\otimes A)\Big)  \\
	\Psi:= &(\tau_{A,A}\otimes A)\Big((A\otimes ev)(\tau_{\DDer(A),A}\otimes A)(\DDer(A)\otimes ev) \\
	& \qquad - (ev\otimes A)(\DDer(A)\otimes ev)(\tau_{\DDer(A),\DDer(A)}	\otimes A) \Big)
\end{align*}

give us, by adjunction, the morphisms
\begin{eqnarray*}
	\Phi^*:= \DB{-}{-}^{SN}_l :  \DDer(A)\otimes \DDer(A) \rightarrow \Hom(A,  A^{\otimes 3}), \\
	\Psi^*:= \DB{-}{-}^{SN}_r : \DDer(A)\otimes \DDer(A) \rightarrow \Hom(A,  A^{\otimes 3}).
\end{eqnarray*}
\begin{prop}\textup{\cite[Prop. 3.2.1]{VdB08}} 
	The morphism $\DB{-}{-}^{SN}_l $ (resp. $\DB{-}{-}^{SN}_r$) factorizes through $\DDer(A)\otimes A$ (resp. $A\otimes \DDer(A)$).
\end{prop}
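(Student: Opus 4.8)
The plan is to prove the statement for $\DB{-}{-}^{SN}_l=\Phi^*$ and then obtain the assertion for $\DB{-}{-}^{SN}_r=\Psi^*$ by the symmetric argument. First I would unwind the adjunction: by construction $\Phi^*(\delta_1\otimes\delta_2)$ is the morphism $A\to A^{\otimes 3}$ sending $a$ to $\Phi(\delta_1\otimes\delta_2\otimes a)$, and the assertion that it \emph{factorizes through} $\DDer(A)\otimes A$ means precisely that this morphism is a derivation $A\to A^{\otimes 3}$ for the external $A$-bimodule structure on the two outer tensor factors ($A$ acting on the first and third factors, the middle factor passive). Indeed, the symmetry $\tau$ gives a bimodule isomorphism $(A\otimes A\otimes A)_{\mathrm{ext}}\cong (A\otimes A)\otimes A$ (external on the first pair, passive last factor), and combined with Proposition \ref{prop::sDer_representable} this yields
\[
\Der\big(A,(A\otimes A\otimes A)_{\mathrm{ext}}\big)\cong \Hom_{A\text{-}\mathtt{Bimod}}\big(\Omega_A,(A\otimes A)\otimes A\big)\cong \Hom_{A\text{-}\mathtt{Bimod}}(\Omega_A,A\otimes A)\otimes A=\DDer(A)\otimes A,
\]
where the middle isomorphism uses that $A$ is finitely generated, so that $\Omega_A$ is a finitely generated $A$-bimodule and $\Hom_{A\text{-}\mathtt{Bimod}}(\Omega_A,-)$ commutes with $-\otimes A$. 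Under this chain the sub-object $\DDer(A)\otimes A\subseteq\Hom(A,A^{\otimes 3})$ is exactly the space of such external derivations, so the desired factorization is equivalent to this derivation property.

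The core step is therefore to check that $a\mapsto\Phi(\delta_1\otimes\delta_2\otimes a)$ is a derivation for the external structure. Writing $\Phi=T_1-T_2$ for its two summands, with $T_1=(A\otimes\tau_{A,A})(ev\otimes A)(\DDer(A)\otimes ev)$ and $T_2$ its $\tau_{\DDer(A),\DDer(A)}$-twisted analogue, I would evaluate each $T_i$ on a product $\mu(a\otimes b)$ and expand using the Leibniz rule for the biderivations $\delta_1,\delta_2$ (derivations into $A\otimes A$ with its external structure). Each $T_i$ is by itself a ``second order'' operator and fails the Leibniz rule, producing extra terms in which $\delta_1$ is applied to a factor already split off by $\delta_2$ and vice versa; the whole point of the antisymmetrization built into $\Phi$ through $\tau_{\DDer(A),\DDer(A)}$ is that these second-order contributions of $T_1$ and of $T_2$ coincide after the relevant symmetry and cancel, exactly as in the classical fact that the commutator of two derivations is again a derivation. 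What survives is precisely the external-Leibniz expansion of $\Phi$, i.e.\ the derivation identity $\Phi(\delta_1\otimes\delta_2\otimes-)\circ\mu=\mu_A(\Phi\otimes A)+{}_A\mu(A\otimes\Phi)$ for the external action.

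This cancellation is the main obstacle: in the monoidal setting it must be carried out diagrammatically, tracking every occurrence of $\tau_{A,A}$, $\tau_{\DDer(A),A}$ and $\tau_{\DDer(A),\DDer(A)}$ and, in the case $\C=\DGA_k$, the attendant Koszul signs, to be sure the unwanted terms match with the correct sign (this is the computation of \cite[Prop. 3.2.1]{VdB08}, which I would transpose verbatim into the categorical language). Once the external derivation property is established, the representability identification above exhibits $\Phi^*$ as $\DDer(A)\otimes\DDer(A)\to\DDer(A)\otimes A\hookrightarrow\Hom(A,A^{\otimes 3})$, which is the claim for $\DB{-}{-}^{SN}_l$. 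For $\DB{-}{-}^{SN}_r=\Psi^*$ I would repeat the argument with $\Psi$, whose leading permutation $(\tau_{A,A}\otimes A)$ places the passive factor on the left; the same cancellation then shows $\Psi^*(\delta_1\otimes\delta_2)$ is a derivation into $A^{\otimes 3}$ with external structure on the last two factors, hence factors through $A\otimes\DDer(A)$. Alternatively, once the eventual antisymmetry of $\DB{-}{-}^{SN}$ is available, the $r$-statement follows formally from the $l$-statement.
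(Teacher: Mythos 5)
Your proposal is correct and follows essentially the same route as the paper: both reduce the factorization through $\DDer(A)\otimes A$ to the isomorphism $\Der(A,A^{\otimes 3})\cong \Hom_{A^e}(\Omega_A,A\otimes A)\otimes A$, which rests on the representability of derivations by $\Omega_A$ (Proposition \ref{prop::sDer_representable}) together with the finite generation of $\Omega_A$. The derivation property of $\Phi^*(\delta_1\otimes\delta_2)$, which you rightly flag as the substantive computation and propose to import from \cite[Prop. 3.2.1]{VdB08}, is exactly what the paper's one-line proof leaves implicit by citing the same source.
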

\begin{proof}
	The $A$-bimodule $\Omega_A$ is finitely generated: this implies the chain complex isomorphism $
	\Der(A,A^{\otimes 3})\cong \Hom_{A^e}(\Omega_A, A\otimes A) \otimes A$.
\end{proof}


\begin{thm}\textup{\cite[Thm. 3.2.2]{VdB08}}~\label{thm::dcrochet_schouten} 
	The morphisms 
	\begin{eqnarray*}
		\DB{-}{-}^{SN}_l +\DB{-}{-}^{SN}_r : \DDer(A)\otimes \DDer(A) \rightarrow \DDer(A)\otimes A\oplus A\otimes \DDer(A)	\\
		(ev, -\tau_{A,A}\circ ev\circ\tau_{A,\DDer(A)}):\DDer(A)\otimes A\oplus A\otimes \DDer(A) \rightarrow A\otimes A
	\end{eqnarray*}
	give us an $A$-linear $0$-double Poisson algebra structure on the free $A$-algebra $T_A\DDer(A)$.
\end{thm}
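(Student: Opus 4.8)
The plan is to obtain the statement from Proposition~\ref{prop::eq_LieRinehart_DPlin}, which identifies linear $0$-double Poisson structures on $T_A(M)$ with $0$-double Lie-Rinehart structures on the $A$-bimodule $M$. Taking $M = \DDer(A)$ and $\Sigma = \mathds{1}$, it therefore suffices to verify that the evaluation anchor $\rho := ev$ (antisymmetrised to the pair $(ev,\,-\tau_{A,A}\circ ev\circ\tau_{A,\DDer(A)})$) and the bracket $\DB{-}{-}^{SN} := \DB{-}{-}^{SN}_l + \DB{-}{-}^{SN}_r$ satisfy the four conditions of Definition~\ref{def::dLieRinehart}. That $\DB{-}{-}^{SN}$ has the correct target $\DDer(A)\otimes A \oplus A\otimes\DDer(A)$, i.e.\ is genuinely a \emph{linear} double bracket, is precisely the factorisation recorded in the preceding proposition; throughout, the finite generation of $A$ enters through the isomorphism $\Der(A,A^{\otimes 3})\cong \Hom_{A^e}(\Omega_A, A\otimes A)\otimes A$ used there, which lets me pass between $\Phi,\Psi$ and their adjoints $\DB{-}{-}^{SN}_l,\DB{-}{-}^{SN}_r$.

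The first three conditions are formal. By Definition~\ref{def::biderivation} every biderivation is a derivation $A\to A\otimes A$ for the external bimodule structure, so $ev$ is an $A$-bimodule morphism and a derivation in its second input, giving condition~\eqref{def::anchor}. Antisymmetry~\eqref{def::DLR_antisym} I would read off the defining formulas for $\Phi$ and $\Psi$: the two expressions are interchanged, up to the sign and the transposition $\tau_{A,A}$, by precomposition with $\tau_{\DDer(A),\DDer(A)}$, which is exactly antisymmetry of the sum $\DB{-}{-}^{SN}_l + \DB{-}{-}^{SN}_r$. The derivation property~\eqref{def::DRL_compatible1} follows by unwinding the adjunction: since $\Phi$ and $\Psi$ are assembled from copies of $ev$, each of which is a derivation, applying $\DB{-}{-}^{SN}$ to a product reproduces exactly the Leibniz composites $\phi^l$ and $\phi^r$ of Definition~\ref{def::dLieRinehart}; I would check this by setting $\Sigma = \mathds{1}$ in the relevant diagram and comparing term by term.

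The real content lies in the two remaining conditions, \eqref{def::DRL_compatible2} and \eqref{def::DRL_DJacobi}, which assert that $\DB{-}{-}^{SN}$ is a double Lie bracket compatible with the anchor; these constitute the original assertion of \cite[Thm.~3.2.2]{VdB08}. I would verify them by expanding both sides along the adjunction $\Der(A,A^{\otimes 3})\cong \Hom_{A^e}(\Omega_A,A\otimes A)\otimes A$ into sums of composites of $ev$ and the product $\mu$ of $A$, and then matching terms using the cyclic $\Z/3\Z$-symmetry of the double-jacobiator (Remark~\ref{rem::dJacobi_stable_Z3Z}). In the commutative shadow these reduce to the Jacobi identity for vector fields; here each term must additionally be tracked together with its symmetry isomorphisms $\tau$ and the attendant Koszul signs from $\tau_{A,B}(a\otimes b) = (-1)^{|a||b|}b\otimes a$.

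The main obstacle is the double-Jacobi identity~\eqref{def::DRL_DJacobi}: applying $\DB{-}{-}^{SN}$ twice produces a large collection of terms whose cancellation to zero rests on a delicate, sign-sensitive pairing mediated by the transpositions $\tau_{A,A\otimes A}$ and $\tau_{A\otimes A,A}$. Since the result is quoted from Van den Bergh, I would carry out this verification by transcribing his vector-space computation into the monoidal formalism of Definition~\ref{def::dLieRinehart}, the only essentially new bookkeeping being the systematic replacement of explicit index permutations by the symmetry constraints $\tau$ of $\C$.
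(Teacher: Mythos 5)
Your argument is correct in substance but follows a genuinely different route from the paper. The paper's proof is a one-liner: Van den Bergh's theorem \cite[Thm.~3.2.2]{VdB08} already provides the Schouten--Nijenhuis bracket as a \emph{shifted} (double Gerstenhaber, i.e.\ degree $-1$) linear double Poisson structure on $T_A\DDer(A)$, and the shifting equivalence $\Sigma\mbox{-}\mathtt{DLR}_A \cong \mathds{1}\mbox{-}\mathtt{DLR}_A$ of Theorem \ref{prop::equivalence_decalage} converts this into the stated $0$-double Poisson structure; none of the axioms of Definition \ref{def::dLieRinehart} is re-verified. You instead propose to check the four double Lie-Rinehart conditions for $(\DDer(A), ev, \DB{-}{-}^{SN}_l+\DB{-}{-}^{SN}_r)$ directly and to pass to $T_A\DDer(A)$ via Proposition \ref{prop::eq_LieRinehart_DPlin}, with the factorisation proposition and finite generation of $A$ correctly invoked to make the bracket linear. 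That is a legitimate alternative: it trades the paper's use of its own main theorem for an explicit, much longer computation, and if carried through it yields a proof independent of the shifting machinery.

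One point you underestimate, however: you assert that the only new bookkeeping in transcribing Van den Bergh's computation is replacing index permutations by the symmetry constraints $\tau$. But Van den Bergh verifies his identities in the shifted conventions (his bracket has degree $-1$), whereas your direct verification needs the $\mathds{1}$-shifted (degree $0$) axioms; the two sets of identities differ exactly by the sign and symmetry corrections, such as $(\mathrm{id},\tau_{\Sigma,A}\otimes M)$ and $\tau_{\Sigma^-,\Sigma M}\otimes A$, that Theorem \ref{prop::equivalence_decalage} packages once and for all. So you must either redo the double-Jacobi verification from scratch in unshifted conventions, or transcribe Van den Bergh verbatim and then apply the shifting theorem --- at which point you have reconstituted the paper's proof. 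This is a bookkeeping issue rather than a fatal gap, but as written your transcription step would be checking the wrong (shifted) identities.
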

\begin{proof}
	We take Van den Bergh's theorem \cite[Th. 3.2.2]{VdB08} and apply \ref{prop::equivalence_decalage}.
\end{proof}

\begin{rem}[Relation between Schouten Nijenhuis and Koszul double brackets]
	Let $A$ be a finitely-generated differential graded algebra with a double Poisson bracket. The $A$-bimodule $\Omega_A$ is a double Lie-Rinehart algebra for the Koszul structure $(\rho^\Omega,\DB{-}{-}^\Omega)$ (see section \ref{subsect::dcrochet_Koszul}). 
	The category $\Ch_k$ is closed, so the anchor $\rho^\Omega$ corresponds to a morphism $(\rho^\Omega)^*: \Omega \rightarrow \Hom(A,A\otimes A)$ which factorizes through $\DDer(A)$.
	As remarked in \ref{rem::DLieRin_closed_cat}, in this case,  the compatibility condition \eqref{def::DRL_compatible2} can be expressed using the Schouten-Nijenhuis double bracket introduced in \ref{thm::dcrochet_schouten}, as:
	\[
	(\rho^\Omega)^*\big( \DB{-}{-}^M\big) = \DB{(\rho^\Omega)^*}{(\rho^\Omega)^*}^{SN}.
	\]
	Then, the morphism $(\rho^\Omega)^*$ is an $A$-double Lie Rinehart morphism between $\Omega_A$ and $\DDer(A)$.
\end{rem}


\nocite{Ler17}
\bibliographystyle{alpha}
\bibliography{biblio_these}

\end{document}